\theoremstyle{plain}
\newtheorem{theorem}{Theorem}[section]
\newtheorem{proposition}[theorem]{Proposition}
\newtheorem{lemma}[theorem]{Lemma}
\newtheorem{corollary}[theorem]{Corollary}
\theoremstyle{remark}
\newtheorem{remark}[theorem]{Remark}
\theoremstyle{definition}
\theoremstyle{definition}
\newcommand{\N}{\mathbb{N}}
\newcommand{\Z}{\mathbb{Z}}
\newcommand{\R}{\mathbb{R}}
\newcommand{\cF}{\mathcal F}
\newcommand{\cT}{\mathcal T}
\newcommand{\cM}{\mathcal M}
\newcommand{\cI}{\mathcal I}
\newcommand{\vp}{\varepsilon}
\newcommand{\xb}{\overline{x}}
\newcommand{\lam}{\lambda}
\newcommand{\summ}{\sum_{l\in\Z\setminus\{0\}}}
\newcommand{\Alstar}{A_l^{{}^*}}
\newcommand{\lnorm}{\left\|}
\newcommand{\rnorm}{\right\|}
\newcommand\PW{ PW_\pi}
\newcommand\PWt{ PW^{(2)}_\pi}
\newcommand\Il{I_{{}_\lambda}}
\newcommand{\ft}[1]{{\widehat{#1}}}
\newcommand{\FT}[1]{{\cF[{#1}]}}
\newcommand{\FTd}[1]{{\cF^{(d)}[{#1}]}}
\newcommand{\FTt}[1]{{\cF^{(2)}[{#1}]}}
\newcommand{\LoneR}{L_1(\R)}
\newcommand{\LtwoR}{L_2(\R)}
\newcommand{\lint}[2]{{\int_{#1}^{#2}}}
\newcommand{\Top}[1]{{\cT}_{{}_{[{#1}]}}}
\newcommand{\Mop}[1]{{\cM}_{{}_{[{#1}]}}}
\newcommand{\Lwoodone}{2\pi}
\newcommand{\CR}{C(\R)}
\newcommand{\LtwoT}{L_2[-\pi,\pi]}
\newcommand{\gl}{g_\lambda}
\title{On the sampling and recovery of bandlimited functions 
via scattered translates of the Gaussian}
\author{Th. Schlumprecht}
\address{Department of Mathematics, Texas A\&M University 
College Station, TX 77843, USA}
\email{thomas.schlumprecht@math.tamu.edu}
\author{N.~Sivakumar}
\address{Department of Mathematics, Texas A\&M University 
College Station, TX 77843, USA}
\email{sivan@math.tamu.edu}
\thanks{\textit{2000 Mathematics Subject Classification}: Primary 41A05} 
\thanks{The research of
the  first author was supported by the NSF}
\keywords{Scattered Data, Gaussian interpolation, Bandlimited functions}
\begin{document}
\maketitle
\markright{INTERPOLATION OF BANDLIMITED FUNCTIONS USING GAUSSIANS}
\begin{abstract}Let $\lam$ be a positive number, and let
$(x_j:j\in\Z)\subset\R$ be a fixed 
Riesz-basis sequence, namely,
$(x_j)$ is strictly increasing, and the
set of functions 
$\{\R\ni t\mapsto e^{ix_jt}:j\in\Z\}$
is a Riesz basis ({\it i.e.,\/} unconditional
basis) for $\LtwoT$.  Given a function
$f\in\LtwoR$ whose Fourier transform is 
zero almost everywhere outside the interval
$[-\pi,\pi]$, there is a unique
square-summable sequence $(a_j:j\in\Z)$, depending
on $\lambda$ and $f$, such that the function
\[
\Il(f)(x):=\sum_{j\in\Z}a_je^{-\lam(x-x_j)^2}, \qquad x\in\R,
\]
is continuous and square integrable on $(-\infty,\infty)$,
and satisfies the interpolatory conditions
$\Il(f)(x_j)=f(x_j)$,
$j\in\Z$.
It is shown that $\Il(f)$ converges
to $f$ in $\LtwoR$, and also uniformly on
$\R$, as $\lam\to0^+$.  A multidimensional
version of this result is also obtained.  
In addition, the fundamental
functions for the univariate interpolation
process are defined, and some of their basic
properties, including their exponential
decay for large argument, are established.
It is further shown that the 
associated interpolation operators
are bounded on $\ell_p(\Z)$ for 
every $p\in[1,\infty]$.
\end{abstract}
\section{Introduction}\label{S:0}

This paper, one in the long tradition of those involving
the interpolatory theory of functions, is concerned
with interpolation of data via the translates
of a Gaussian kernel.  The motivation for this work
is twofold.  The first is the theory
of {\it Cardinal Interpolation\/}, which deals
with the interpolation of data prescribed at
the integer lattice, by means of the integer
shifts of a single function.  This subject has a
rather long history, and it enjoys interesting connections
with other branches of 
pure and applied mathematics, {\it e.g.\/}
Toeplitz matrices, Function Theory, Harmonic Analysis,
Sampling Theory.
When the underlying function (whose shifts form
the basis for interpolation) is taken to be 
the so-called {\it Cardinal B-Spline\/}, one 
deals with {\it Cardinal Spline Interpolation\/},
a subject championed by Schoenberg, and taken 
up in earnest by a host of followers.  More recently,
it was discovered that there is a remarkable
analogy between cardinal spline interpolation
and cardinal interpolation by means of the (integer)
shifts of a Gaussian, a survey of which
topic may be found in \cite{RS3}.  The current
article may also be viewed as a contribution in
this vein;  it too explores further connections
between the interpolatory theory of splines and 
that of the Gauss kernel, but does so in the context of
interpolation at point sets which are more
general than the integer lattice.  This brings
us to the second, and principal, motivating 
influence for our work,
namely the researches of Lyubarskii and Madych
\cite{LM}.  This duo have considered 
spline interpolation at certain (infinite)
sets of points which are generalizations
of the integer lattice, and we were prompted
by their work to ask if the analogy between
splines and Gaussians, very much in evidence
in the context of cardinal interpolation, persists
in this `nonuniform' setting also.  Our paper
seeks to show that this is indeed the case.  The
influence of \cite{LM} on our work goes further.
Besides
providing us with the motivating question for 
our studies, it also offered us an array of 
basic tools which we have modified and adapted.

We shall supply more particulars -- of a technical
nature -- concerning the present paper later in this
introductory section, soon after we finish discussing
some requisite general material.

A basic tool in our analysis is the {\it Fourier
Transform\/}, so we assemble some basic and 
relevant facts about it here; our sources
for this material are \cite{Go} and 
\cite{Ch}.
If 
$g\in\LoneR$, then the Fourier transform
of $g$, $\widehat{g}$, is defined as follows:
\begin{equation}\label{E:wrongdef}
\ft{g}(x):=\int_{-\infty}^{\infty}g(t)e^{-ixt}\,dt, \qquad
x\in\R.
\end{equation}
It is known that $\widehat{g}$ is uniformly
continuous on $\R$, and that $\lim_{x\to\pm\infty}
\widehat{g}(x)=0$.
In general $\widehat{g}$ need not
be integrable, but if it is, and if 
$g\in\CR$ (the space of functions
which are continuous throughout the real line), then
one obtains the following {\it inversion formula\/}:
$$
g(t)=\frac{1}{\Lwoodone}\int_{-\infty}^{\infty}\widehat{g}(x)
e^{ixt}\,dx, \qquad t\in\R.
$$

Suppose now that $g\in\LtwoR$.  
The Fourier transform
of $g$, denoted by $\FT{g}$, is the function
in $\LtwoR$ for which $\lim_{N\to\infty}\|h_N-{\cF}[g]\|_{\LtwoR}=0$,
where
\[
h_N(x):=\int_{-N}^N g(t)e^{ixt}\,dt, \qquad x\in\R.
\]
The integral above is finite for every real number $x$, because
$g$ is square integrable on $\R$, hence locally
integrable on $\R$.  As ${\cF}[g]$ is obtained
({\it a priori\/}) only as an element in $\LtwoR$, 
it is determined only 
almost everywhere.  It is known that
${\cF}$ is a linear isomorphism on $\LtwoR$, and that
the following hold:
\begin{equation}\label{E:0.1}
\|{\cF}[g]\|_{\LtwoR}^2 =\Lwoodone \|g\|_{\LtwoR}^2,
\;g\in\LtwoR; 
\quad
{\cF}[g]=\widehat{g},\;
g\in\LtwoR\cap\LoneR.
\end{equation}
The inversion formula for the Fourier transform of
square-integrable functions takes the following 
form: $\lim_{N\to\infty}\|g-H_N\|_{\LtwoR}=0$, where
\begin{equation}\label{E:0.2}
H_N(t):=\frac{1}{\Lwoodone}\int_{-N}^N{\cF}[g](x)e^{ixt}\,dt, \qquad t\in\R.\\
\end{equation}
If, in addition to being square integrable, ${\cF}[g]$
is also integrable on $\R$, then
the Dominated Convergence Theorem implies
that 
\begin{equation}\label{E:0.3}
\lim_{N\to\infty}H_N(t)=
\frac{1}{\Lwoodone}\int_{-\infty}^\infty{\cF}[g](x)e^{ixt}\,dx
=:H(t), \qquad
t\in\R,
\end{equation}
so $g$ must coincide with $H$
almost everywhere.  As $H$
is continuous on $\R$ (in fact,
$2\pi H(t)=\widehat{{\cF}[g]}(-t)$),
we find that, if $g\!\in\!\LtwoR\cap\CR$ and ${\cF}[g]\!\in\!L_1(\R)$, then
\begin{equation}\label{E:0.4}
g(t)=\frac{1}{\Lwoodone}\int_{-\infty}^\infty{\cF}[g](x)e^{ixt}\,dx, 
\qquad t\in\R.
\end{equation}

The functions we seek to interpolate 
are the so-called {\it bandlimited\/}
or {\it Paley--Wiener functions\/}.  Specifically,
we define
\[
\PW:=\big\{g\in\LtwoR\,:\,{\cF}[g]=0\hbox{\ 
almost everywhere outside }[-\pi,\pi]\big\}.
\]
The first equation in \eqref{E:0.1} leads
to the finding that $\PW$ is a closed
subspace of $\LtwoR$.  Moreover, if
$g\in\PW$, then ${\cF}[g]\in\LtwoR$
and ${\cF}[g]=0$ almost everywhere
outside $[-\pi,\pi]$, so 
${\cF}[g]\in\LoneR$; hence
the inversion formula discussed in the 
foregoing paragraph
asserts that
\begin{equation}\label{E:0.5}
g(t)=\frac{1}{\Lwoodone}\int_{-\infty}^{\infty}
{\cF}[g](x)e^{ixt}\,dx=
\frac{1}{\Lwoodone}\int_{-\pi}^{\pi}
{\cF}[g](x)e^{ixt}\,dx, 
\end{equation}
for almost every real number $t$.  Therefore,
by altering the values of $g$, if need be,
on a Lebesgue-null set, we may assume 
that \eqref{E:0.5} holds for {\sl every\/} real number
$t$, {\sl and this assumption will be in place
throughout the article\/}.
In particular we shall assume that every function
in $\PW$ is continuous throughout $\R$.
Moreover,  
the Bunyakovskii--Cauchy--Schwarz 
Inequality and \eqref{E:0.1} 
combine to show that $g$ is also a bounded function:
\begin{equation}\label{E:0.5a}
|g(t)|\le \frac1{\sqrt{2\pi}}\|\FT{g}\|_{L_2[-\pi,\pi]}=\|g\| _{L_2(\R)}, 
\quad t\in\R.
\end{equation}
 Even though it is
not relevant here, we mention,
at least by way of explaining our terminology,
that the celebrated Paley--Wiener Theorem
proclaims that a function belongs to $\PW$ 
if and only if it can be extended
to the complex plane as an
entire function of exponential type at most $\pi$.

Having discussed the functions which are to 
be interpolated, we now describe
the (canonical) point sets at which these
functions will be interpolated; it is 
customary to refer to these points as
{\it interpolation points\/}, or
{\it sampling points\/}, or {\it data sites\/}.
For the most part, though not always, we shall
be concerned with data sites which
give rise to {\it Riesz-basis sequences\/}.  
Precisely,
following \cite{LM}, we say that
a real sequence $(x_j:j\in\Z)$ is
an Riesz-basis sequence if it satisfies
the following conditions: $x_j<x_{j+1}$ for
every integer $j$, and the sequence of functions
$(e_j(t):=e^{-ix_jt}: j\in\Z,\; t\in\R)$ is
a {\it Riesz basis\/} for $L_2[-\pi,\pi]$.  We
recall that saying that a sequence $(\varphi_j:j\in\Z)$
in a Hilbert space ${\mathcal H}$ is a
Riesz basis for ${\mathcal H}$ means that
{\sl every\/} 
element $h\in {\mathcal H}$
admits a unique representation of the form
\begin{equation}\label{E:0.6}
h=\sum_{j\in\Z}a_j\varphi_j, \qquad \sum_{j\in\Z}|a_j|^2<\infty,
\end{equation}
and that there exists a universal constant $B$ such that
\begin{equation}\label{E:0.6b}
B^{-1}\left(\sum_{j\in\Z}|c_j|^2\right)^{1/2}\le
\left\|\sum_{j\in\Z}c_j\varphi_j\right\|_{{\mathcal H}}\le B
\left(\sum_{j\in\Z}|c_j|^2\right)^{1/2},
\end{equation}
for {\sl every\/} square-summable sequence $(c_j:j\in\Z)$.
Classical examples
of Riesz-basis sequences 
are given in
\cite{LM}, where it is also pointed out
that if $(x_j:j\in\Z)$ is an Riesz-basis sequence, then
there exist positive numbers $q$ and $Q$ such that
\begin{equation}\label{E:0.7}
q\le x_{j+1}-x_j\le Q, \qquad j\in\Z.
\end{equation}

The interpolation process we study here
is one that arises from translating
a fixed Gaussian.  Specifically, let
$\lambda>0$ be fixed, and let
$(x_j:j\in\Z)$ be an Riesz-basis sequence
(a relaxation of this condition will be
considered in the final section of the paper).
We show in the next section that 
given a function $f\in\PW$, there
exists a unique square-summable sequence  
$(a_j:j\in\Z)$ -- depending on
$\lambda$, $f$, and the sampling
points $(x_j)$ --  such that 
the function
\begin{equation}\label{E:0.8}
\Il(f)(x):=\sum_{j\in\Z}a_je^{-\lambda(x-x_j)^2}, \qquad
x\in\R,
\end{equation}
is continuous and square-integrable on 
$\R$, and satisfies the interpolatory conditions
\begin{equation}\label{E:0.9}
\Il(f)(x_k)=f(x_k), \qquad k\in\Z.
\end{equation}
The function $\Il(f)$ is 
called the {\it Gaussian Interpolant\/} to $f$
at the data sites $(x_k:k\in\Z)$.  We 
also prove, again in the upcoming section, that
the map $f\mapsto\Il(f)$ is a bounded linear
operator from $\PW$ to $\LtwoR$.  As
expected, the norm of this 
operator $\Il$ -- which we refer to as the
{\it Gaussian Interpolation Operator\/} --  
is shown to be bounded
by a constant depending on $\lambda$ and the
choice of the Riesz-basis sequence.  However,
this is not sufficient for our subsequent
analysis, in which we intend to vary
the scaling parameter $\lambda$.  So in Section
3 we demonstrate that, if the underlying RRB
sequence is fixed, and if $\lambda\le1$ 
(the upper bound $1$ being purely a matter of convenience), then
the operator norm of $\Il$ can be majorized by
a number which is {\sl independent\/} of 
$\lambda$. Armed with this finding, we proceed to
Section 4, wherein we establish the following
focal convergence result:
\begin{theorem}\label{T:0.1}
Suppose that $(x_j:j\in \Z)$ is a (fixed) Riesz-basis sequence, 
and let $\Il$ be the associated Gaussian Interpolation Operator.
Then for any $f\in\PW$, we have
 $f=\lim_{\lambda\to0^+} \Il(f)$ in $L_2(\R)$ and uniformly on $\R$.
\end{theorem}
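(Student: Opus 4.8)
The plan is to obtain the theorem from the uniform operator estimate of Section~3 by the classical scheme of \emph{uniform boundedness plus convergence on a dense subclass}. By that section there is a constant $C$, independent of $\lam\in(0,1]$, with $\|\Il(f)\|_{\LtwoR}\le C\|f\|_{\LtwoR}$ for every $f\in\PW$. I would then fix the dense subclass
\[
D:=\{f\in\PW:\FT{f}\in C^\infty(\R)\ \text{and}\ \mathrm{supp}\,\FT{f}\subset[-\pi+\delta,\pi-\delta]\ \text{for some}\ \delta>0\},
\]
which is dense in $\PW$ because truncating $\FT{f}$ to $[-\pi+\delta,\pi-\delta]$ and mollifying approximates any element of $\LtwoT$. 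Once convergence is known for $f\in D$, the general case follows by an $\vp/3$ argument: for arbitrary $f\in\PW$ pick $g\in D$ with $\|f-g\|_{\LtwoR}$ small and write $\|\Il(f)-f\|\le\|\Il(f-g)\|+\|\Il(g)-g\|+\|g-f\|$, the outer terms being dominated by $(C+1)\|f-g\|$ thanks to the $\lam$-independent bound.

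The heart of the matter is convergence on $D$, which I would carry out on the Fourier side. Writing $\gl(x):=e^{-\lam x^2}$, one has $\ft{\gl}(\xi)=\sqrt{\pi/\lam}\,e^{-\xi^2/(4\lam)}$ and, for the interpolant, $\FT{\Il(f)}(\xi)=\ft{\gl}(\xi)A_\lam(\xi)$ with $A_\lam(\xi):=\sum_{j\in\Z}a_je^{-ix_j\xi}$. By the Plancherel identity \eqref{E:0.1} it suffices to prove $\FT{\Il(f)}\to\FT{f}$ in $\LtwoR$, so I split $\R=[-\pi,\pi]\cup\big(\R\setminus[-\pi,\pi]\big)$ into an ``in-band'' part and an ``aliasing'' part. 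The key structural tool is a per-period Riesz estimate: the substitution $\xi=\eta+2\pi m$ turns $A_\lam$ on $[2\pi m-\pi,2\pi m+\pi]$ into $\sum_j(a_je^{-2\pi imx_j})e^{-ix_j\eta}$, and since the unimodular twist $a_j\mapsto a_je^{-2\pi imx_j}$ preserves the $\ell_2$-norm, \eqref{E:0.6b} yields $\int_{2\pi m-\pi}^{2\pi m+\pi}|A_\lam|^2\asymp\|(a_j)\|_{\ell_2}^2$ with constants uniform in $m$. Feeding the Gaussian decay of $\ft{\gl}$ into this controls the aliasing blocks; crucially, because $\FT{f}$ vanishes on $[-\pi+\delta,\pi]$, the aliased contribution that would otherwise survive near the band edge is identically zero for $f\in D$, while on $[-\pi,\pi]$ the interpolation should force the in-band factor toward $1$ and drive $\FT{\Il(f)}$ to $\FT{f}$.

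The main obstacle I anticipate is exactly this tail-versus-edge control, for two intertwined reasons. First, in the non-uniform setting there is no $2\pi$-periodic symbol: periodizing the interpolation identity $\Il(f)(x_k)=f(x_k)$ produces the coupling factor $e^{2\pi im(x_k-x_j)}$, which does not factor, so one cannot simply invert a multiplier as in the cardinal case, and the in-band $\LtwoR$-convergence must be extracted from the machinery adapted from \cite{LM} rather than from an explicit formula for $A_\lam$. Second, the coefficient norm $\|(a_j)\|_{\ell_2}$ may blow up (roughly like $e^{c/\lam}$ for some $c>0$) as $\lam\to0^+$, so it is only the \emph{product} $\ft{\gl}A_\lam$, not $A_\lam$ alone, that stays bounded; the Gaussian weight must be shown to beat the growth, and this is precisely where the spectral gap $\delta$ is essential, since it converts the marginal edge exponents (where the aliasing would concentrate in windows of width $O(\lam)$ about $\pm\pi$) into genuinely decaying ones. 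Establishing these tail bounds uniformly in $m$, via the per-period Riesz estimate together with the separation bounds \eqref{E:0.7}, is the crux of the argument.

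Uniform convergence I would deduce from the same circle of estimates. Split $\Il(f)-f$ into its band-limited part, namely the inverse transform of $(\ft{\gl}A_\lam-\FT{f})$ restricted to $[-\pi,\pi]$, and its off-band part. The band-limited part lies in $\PW$, so by \eqref{E:0.5a} its sup-norm is bounded by its $\LtwoR$-norm and hence by the $\LtwoR$-convergence already in hand; the off-band part has sup-norm at most $\tfrac1{2\pi}$ times the $\LoneR$-mass of $\ft{\gl}A_\lam$ over $\R\setminus[-\pi,\pi]$, which the per-period Riesz bound and the Gaussian decay show tends to $0$ (the aliasing being concentrated in $O(\lam)$-windows at the band edges, and vanishing outright on $D$). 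Combining the two pieces gives $\|\Il(f)-f\|_\infty\to0$, completing the proof.
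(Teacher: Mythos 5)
Your outer scaffolding is sound: the uniform bound $\sup_{0<\lam\le1}\|\Il\|<\infty$ is indeed Theorem \ref{T:2.8}, the density of your class $D$ in $\PW$ is correct, and the $\vp/3$ reduction is valid. But the heart of the proof --- convergence on the dense subclass, and specifically the in-band estimate $\|\FT{f}-\FT{\Il(f)}\|_{\LtwoT}\to0$ --- is never actually carried out. You correctly diagnose the obstruction (periodizing the interpolation conditions produces the non-factoring twist $e^{2\pi i m x_k}$, so there is no multiplier to invert, and $\|(a_j)\|_{\ell_2}$ can grow like $e^{\pi^2/(4\lam)}$), and then you write that the in-band convergence ``must be extracted from the machinery adapted from \cite{LM}.'' That machinery \emph{is} the missing proof. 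The paper resolves it by packaging the periodization into the operators $A_l$ of \eqref{E:2.0} and their adjoints, deriving the exact identity $\bigl(\cI+\Top{1/(4\lam)}\Mop{1/(4\lam)}\bigr)\,r(\FT{\Il(f)})=r(\FT{f})$ (Theorem \ref{T:2.2} recast as \eqref{E:3.2}), and proving that $\cI+\Top{1/(4\lam)}\Mop{1/(4\lam)}$ is invertible with inverse bounded uniformly in $\lam\in(0,1]$ (Proposition \ref{P:3.1} --- this is exactly where Theorem \ref{T:2.8} enters, via $L_\lam=r\circ\cF\circ\Il\circ\cF^{-1}$ being a right inverse, plus a positivity argument for injectivity). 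That identity converts the in-band error into $O\bigl(\|\Mop{1/(4\lam)}(\FT{f})\|_{\LtwoT}\bigr)$, which tends to $0$ by dominated convergence for \emph{every} $f\in\PW$; your spectral-gap class $D$ would make this term $O(e^{-c_\delta/\lam})$ but is not needed, so the density reduction, while harmless, buys nothing here.

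A second, more localized error: you assert that for $f\in D$ ``the aliased contribution \dots is identically zero.'' It is not. The off-band mass $\|\FT{\Il(f)}\|_{L_2(\R\setminus[-\pi,\pi])}$ is $\sqrt{\pi/\lam}\,e^{-u^2/(4\lam)}|\Psi_\lam(u)|$ integrated over $|u|>\pi$, and $\Psi_\lam$ is a genuine almost-periodic-type sum supported on all of $\R$ no matter what spectral gap $\FT{f}$ has; a gap in $\FT{f}$ does not translate into a gap in $\Psi_\lam$. The paper's estimate \eqref{E:3.8}--\eqref{E:3.9} shows this term is $O\bigl(\|\Mop{1/(4\lam)}(\FT{\Il(f)})\|^2\bigr)=O\bigl(\|\Mop{1/(4\lam)}(\FT{f})\|^2\bigr)$, i.e.\ the off-band control is itself a consequence of the in-band operator identity, not an independent computation. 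So both halves of the $L_2$ estimate, and with them the uniform-convergence argument of Theorem \ref{T:3.3}, hinge on the one step your proposal leaves unproven.
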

\noindent We note that, in the case 
when $x_j=j$, this theorem was proved in \cite{BS}. 

Our proofs in Sections 2--5 rely heavily on the machinery 
and methods developed
in \cite{LM} for cardinal splines; indeed, 
as mentioned earlier, our primary task 
in this paper has been to adapt these
to the study of the Gaussian.  However, most of
these {\sl arguments\/}
do not extend {\it per se\/} to the multidimensional situation, which
occupies our attention in Section 5.  The results presented
in this section are far from complete, and should be viewed
only as partial generalizations of their univariate counterparts.
Nonetheless, it is not without interest to note that 
tackling even this simplified situation requires a combination
of the  {\sl results\/} established in one dimension and
some abstract functional-analytic techniques.  The paper
concludes with Section 6, in which we revisit
univariate interpolation, but consider sampling
points which satisfy a less restrictive
condition than that of giving rise
to a Riesz-basis sequence.  We introduce here
the {\it fundamental functions\/} for
interpolation at such data sites, and prove
that they decay exponentially for 
large argument.  In addition to being
of independent interest, as readers
familiar with spline theory will readily attest,
this result also paves the way
towards a generalization of some 
of the main results of
Section 2.

We have attempted to make this article
as self contained as possible, and we 
request the indulgence of those readers
who may find an abundance of detail
between these pages.

\section{Notations and basic facts}\label{S:1}

In this section we shall reintroduce the interpolation problem which
concerns us, define the corresponding interpolant and interpolation
operator, and establish
some of their basic properties.  We shall uncover these in a
series of propositions, which begins with this simple observation.

\begin{proposition}\label{P:1.0}  If $\alpha$ is a positive
number, then
\begin{equation*}
\sum_{l\in\Z\setminus\{0\}}e^{-\alpha(2|l|-1)^2}\le 
\frac{2e^{-\alpha}}{ 1-e^{-\alpha}}=:\kappa(\alpha).
\end{equation*}
\end{proposition}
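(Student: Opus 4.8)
The plan is to bound the sum $\summ e^{-\alpha(2|l|-1)^2}$ by comparing each exponent against a simpler linear expression and then summing a geometric series. First I would exploit the symmetry in $l$: since the summand depends only on $|l|$, I can write
\[
\summ e^{-\alpha(2|l|-1)^2}=2\sum_{l=1}^{\infty}e^{-\alpha(2l-1)^2}.
\]
This reduces the two-sided sum to a one-sided sum over the positive integers, accounting for the factor of $2$ that appears in the claimed bound $\kappa(\alpha)$.

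The key step is the elementary estimate $(2l-1)^2\ge 2l-1$ for every integer $l\ge 1$, which holds because $2l-1\ge 1$ and squaring a quantity that is at least $1$ only increases it. Since $\alpha>0$, the function $u\mapsto e^{-\alpha u}$ is decreasing, so this inequality yields $e^{-\alpha(2l-1)^2}\le e^{-\alpha(2l-1)}$ term by term. Substituting and summing, I obtain
\[
2\sum_{l=1}^{\infty}e^{-\alpha(2l-1)^2}\le 2\sum_{l=1}^{\infty}e^{-\alpha(2l-1)}
=2e^{-\alpha}\sum_{l=1}^{\infty}e^{-2\alpha(l-1)}=2e^{-\alpha}\sum_{m=0}^{\infty}\bigl(e^{-2\alpha}\bigr)^{m}.
\]

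Finally I would evaluate the geometric series. Since $\alpha>0$ forces $e^{-2\alpha}<1$, the series converges to $(1-e^{-2\alpha})^{-1}$, giving the bound $2e^{-\alpha}/(1-e^{-2\alpha})$. To arrive at the stated form $2e^{-\alpha}/(1-e^{-\alpha})$, I factor $1-e^{-2\alpha}=(1-e^{-\alpha})(1+e^{-\alpha})\ge 1-e^{-\alpha}$, so that $(1-e^{-2\alpha})^{-1}\le(1-e^{-\alpha})^{-1}$, which furnishes exactly $\kappa(\alpha)$. (Alternatively, one can bound the geometric series even more crudely at the outset, but this factorization gives the cleanest route to the precise constant claimed.) There is no real obstacle here; the only point requiring a moment's care is choosing a linear lower bound for $(2l-1)^2$ that is simultaneously summable as a geometric series and compatible with the target constant, and the bound $(2l-1)^2\ge 2l-1$ serves both purposes.
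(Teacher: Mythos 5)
Your proof is correct and follows essentially the same route as the paper: symmetrize over $\pm l$, replace the exponent $(2l-1)^2$ by a linear minorant, and sum the resulting geometric series. The paper simply uses the minorant $(2l-1)^2\ge l$ so that the geometric series evaluates exactly to $\kappa(\alpha)$, whereas your choice $(2l-1)^2\ge 2l-1$ yields the slightly sharper $2e^{-\alpha}/(1-e^{-2\alpha})$ and then needs the extra factorization step to recover the stated constant.
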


\begin{proof}  
$$
\sum_{l\in\Z\setminus\{0\}}e^{-\alpha(2|l|-1)^2}\le
2\sum_{l=1}^\infty e^{-\alpha l}=
\frac{2e^{-\alpha}}{ 1-e^{-\alpha}}.
$$
\end{proof}

In what follows we shall use the following notation: given a
positive number $\lambda$, the Gaussian function with
scaling parameter $\lambda$ is defined by
\[
\gl(x):=e^{-\lambda x^2}, \qquad x\in\R.
\]
We recall the well-known fact (see, for example, \cite[p. 43]{Go})
that 
\begin{equation}\label{E:1.0}
{\cF}[\gl](u)={\widehat{\gl}}(u)=\sqrt{\frac{\pi}{\lambda}}\,e^{-u^2/(4\lambda)}, \qquad u\in\R.
\end{equation}

We now record two results from the literature; both will
be of use in this section and also in Section 5.

\begin{proposition}\label{P:NSW} {\rm cf. \cite[Lemma 2.1]{NSW}}\\
Let $\lambda$ and $q$ be fixed positive numbers, and let
$\|\cdot\|_2$ denote the Euclidean norm in 
$\R^d$.  There exists a number $\nu$, depending only
on $d$, $\lam$, and $q$, such that
the following holds: if 
$(x_j)$
is any sequence in $\R^d$ 
with $\|x_j-x_k\|_2\ge q$ for
$ j\neq k$, and $x$ is 
any point in $\R^d$, then
$
\sum_j\gl(\|x-x_j\|_2)\le\nu.
$  
\end{proposition}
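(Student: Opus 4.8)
The plan is to exploit the $q$-separation of the points $(x_j)$ through a volume-packing argument, grouping the points according to their distance from the fixed point $x$ and controlling the contribution of each group by the rapid decay of the Gaussian.

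First I would decompose $\R^d$ into the concentric shells
\[
S_n := \{y \in \R^d : nq \le \|y - x\|_2 < (n+1)q\}, \qquad n = 0, 1, 2, \ldots,
\]
and estimate the number $N_n$ of points $x_j$ lying in $S_n$. Since $\|x_j - x_k\|_2 \ge q$ whenever $j \neq k$, the open balls $B(x_j, q/2)$ are pairwise disjoint; moreover, if $x_j \in S_n$ then $B(x_j, q/2)$ is contained in the annulus $A_n := \{y : \max(n - 1/2, 0)\,q \le \|y - x\|_2 < (n + 3/2)q\}$. Comparing Lebesgue measures — each ball has measure $\omega_d (q/2)^d$, where $\omega_d$ denotes the volume of the unit ball in $\R^d$, and their disjoint union sits inside $A_n$ — yields
\[
N_n \le \frac{\operatorname{vol}(A_n)}{\omega_d (q/2)^d} \le C_d\,(n+1)^{d-1},
\]
where $C_d$ depends only on $d$. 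The crucial point is that this bound grows only polynomially in $n$ and is \emph{uniform} in the center $x$ and in the particular separated sequence.

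Next, for any $x_j \in S_n$ one has $\|x - x_j\|_2 \ge nq$, and hence $\gl(\|x - x_j\|_2) = e^{-\lambda \|x - x_j\|_2^2} \le e^{-\lambda q^2 n^2}$. Summing over the shells therefore gives
\[
\sum_j \gl(\|x - x_j\|_2) \le \sum_{n=0}^\infty N_n\, e^{-\lambda q^2 n^2} \le C_d \sum_{n=0}^\infty (n+1)^{d-1}\, e^{-\lambda q^2 n^2} =: \nu.
\]
The right-hand side is finite because the polynomially growing coefficients are overwhelmed by the Gaussian factor $e^{-\lambda q^2 n^2}$, and it manifestly depends only on $d$, $\lambda$, and $q$, establishing the claim.

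The main obstacle is the packing estimate for $N_n$: one must confirm that the hypothesis $\|x_j - x_k\|_2 \ge q$ translates, via disjointness of the half-radius balls together with a clean volume comparison, into a count per shell that is both uniform in $x$ and of controlled polynomial order in $n$. Once this geometric counting step is in hand, the convergence of the resulting series is routine, since any fixed power of $n$ is summable against $e^{-\lambda q^2 n^2}$.
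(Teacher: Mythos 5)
Your argument is correct and complete. Note, however, that the paper itself offers no proof of this proposition: it is quoted from the literature (Lemma 2.1 of the cited work of Narcowich, Sivakumar, and Ward), so there is no in-text argument to compare against. Your volume-packing proof --- disjointness of the half-separation balls $B(x_j,q/2)$, the shell count $N_n\le C_d(n+1)^{d-1}$ uniform in $x$, and summability of $(n+1)^{d-1}e^{-\lambda q^2n^2}$ --- is the standard route to this estimate and all steps check out; in particular the containment of $B(x_j,q/2)$ in the thickened annulus $A_n$ and the resulting polynomial bound on $N_n$ are exactly what is needed to make $\nu$ depend only on $d$, $\lambda$, and $q$.
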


This next result is an important finding in the theory
of radial-basis functions.

\begin{theorem}\label{T:NW} {\rm cf. \cite[Theorem 2.3]{NW}}\\
Let $\lambda$ and $q$ be fixed positive numbers, and let
$\|\cdot\|_2$ denote the Euclidean norm in 
$\R^d$.  There exists a number $\theta$, depending only
on $d$, $\lam$, and $q$, such that
the following holds: if 
$(x_j)$
is any sequence in $\R^d$ 
with $\|x_j-x_k\|_2\ge q$ for
$j\neq k$, then
$
\sum_{j,k}\xi_j{\overline \xi}_k\gl(\|x_j-x_k\|_2)\ge\theta
\sum_j|\xi_j|^2,
$
for every sequence of complex numbers
$(\xi_j)$. 
\end{theorem}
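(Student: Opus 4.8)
The plan is to establish this lower bound on the smallest eigenvalue of the Gaussian Gram matrix by passing to the Fourier side, where the positivity of the Gaussian's transform is manifest, and then localizing carefully. It suffices to treat finitely supported coefficient sequences $(\xi_j)$, since the general case follows by approximation (the matrix is bounded on $\ell_2$ by Proposition \ref{P:NSW} together with the Schur test, so the quadratic form is continuous and the finitely supported sequences are dense). For such a sequence, writing $\Phi(x):=\gl(\|x\|_2)=e^{-\lam\|x\|_2^2}$ and recalling the $d$-dimensional analogue of \eqref{E:1.0}, namely $\cF[\Phi](\omega)=(\pi/\lam)^{d/2}e^{-\|\omega\|_2^2/(4\lam)}$, the inversion formula gives the representation
\[
\sum_{j,k}\xi_j\overline{\xi}_k\Phi(x_j-x_k)=\frac1{(2\pi)^d}\int_{\R^d}\cF[\Phi](\omega)\,|S(\omega)|^2\,d\omega,\qquad S(\omega):=\sum_j\xi_je^{i\omega\cdot x_j}.
\]
Since $\cF[\Phi]\ge0$, the form is nonnegative; the task is to extract a quantitative lower bound.

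First I would localize in frequency. Fix once and for all an even function $\psi$ on $\R^d$ whose transform $\cF[\psi]$ is smooth, nonnegative, bounded by $1$, and supported in the unit ball; then $\psi$ is rapidly decreasing and $\psi(0)=(2\pi)^{-d}\int\cF[\psi]>0$. For a scale $M>0$ set $\psi_M(x):=M^d\psi(Mx)$, so that $\cF[\psi_M](\omega)=\cF[\psi](\omega/M)$ is supported in $\{\|\omega\|_2\le M\}$ and bounded there by $1$. Because $\cF[\Phi]$ is radially decreasing, it dominates $\beta_M\cF[\psi_M]$ pointwise, where $\beta_M:=(\pi/\lam)^{d/2}e^{-M^2/(4\lam)}$ is its value on the sphere $\|\omega\|_2=M$. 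Substituting this minorant into the displayed integral and inverting yields
\[
\sum_{j,k}\xi_j\overline{\xi}_k\Phi(x_j-x_k)\ge\beta_M\sum_{j,k}\xi_j\overline{\xi}_k\psi_M(x_j-x_k).
\]

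The heart of the argument is then a Gershgorin (diagonal-dominance) estimate for the kernel $\psi_M$. The diagonal contributes $\psi_M(0)=M^d\psi(0)$, while the off-diagonal mass is controlled using the separation $\|x_j-x_k\|_2\ge q$: the points $M(x_j-x_k)$ with $k\ne j$ form an $Mq$-separated set lying outside the ball of radius $Mq$, so the rapid decay of $\psi$ (say $|\psi(y)|\le C_N(1+\|y\|_2)^{-N}$ with $N=d+1$) gives $\sup_j\sum_{k\ne j}|\psi_M(x_j-x_k)|\le CM^{d-N}q^{-N}$ for a constant $C=C(d)$. Hence the bracket $\psi_M(0)-\sup_j\sum_{k\ne j}|\psi_M(x_j-x_k)|$ is at least $M^d(\psi(0)-CM^{-N}q^{-N})$, which exceeds $\tfrac12 M^d\psi(0)$ once $M=M_0$ is chosen large enough in terms of $d$ and $q$ alone. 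Combining the two displays with this $M_0$ produces $\theta:=\tfrac12(\pi/\lam)^{d/2}e^{-M_0^2/(4\lam)}M_0^d\psi(0)$, which depends only on $d$, $\lam$, and $q$, as required.

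I expect the Gershgorin step to be the main obstacle, and it is precisely where the choice of a \emph{smooth} frequency cutoff is forced. Had I instead minorized $\cF[\Phi]$ by a constant times the sharp indicator $\mathbf 1_{\{\|\omega\|_2\le M\}}$, the resulting spatial kernel would be a Bessel-type function decaying only like $\|x\|_2^{-(d+1)/2}$, whose tails are not summable over a $d$-dimensional separated set for any $d\ge1$; the diagonal-dominance estimate would then fail outright. Using a compactly supported \emph{smooth} multiplier repairs this, since its inverse transform is Schwartz, making the off-diagonal sum not merely finite but as small as desired for large $M$. The remaining care is bookkeeping: verifying that $M_0$, and hence $\theta$, can be taken to depend on $d$, $\lam$, $q$ only, and that the reduction to finitely supported coefficients is legitimate.
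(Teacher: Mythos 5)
This statement is one the paper does not prove at all: it is imported verbatim from Narcowich--Ward (\cite{NW}, Theorem 2.3) as a known tool, so there is no internal proof to compare against. Judged on its own, your argument is correct and complete: the reduction to finitely supported $(\xi_j)$ via boundedness of the Gram matrix, the Fourier representation of the quadratic form, the pointwise minorization of $\cF[\gl](\cdot)$ by $\beta_M\cF[\psi_M]$ on the ball of radius $M$, and the diagonal-dominance estimate using the packing bound for a $q$-separated set all check out, and the resulting $\theta$ depends only on $d$, $\lam$, $q$ as required. This is in fact essentially the classical Narcowich--Ward mechanism (minorize the transform by a band-limited, rapidly decaying kernel and exploit separation to control the off-diagonal mass), so you have supplied a correct proof of a result the authors chose to cite rather than prove; your closing remark about why a sharp frequency cutoff would fail (Bessel-type decay $\|x\|_2^{-(d+1)/2}$ is not summable over a separated set in any dimension $d\ge 1$) is also accurate and identifies the genuine reason a smooth or convolved cutoff is needed.
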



\begin{proposition}\label{P:1.1}
Suppose that $(x_j:j\in\Z)$ is a 
sequence
of real numbers satisfying the following condition: there exists a positive
number $q$ such that $x_{j+1}-x_j\ge q$ for every integer $j$.  Let
$\lambda>0$ be fixed, and let
$(a_j:j\in\Z)$ be a bounded sequence of complex numbers.  Then the function
$\R\ni x\mapsto\sum_{j\in\Z}a_j\gl(x-x_j)$ is continuous and bounded throughout the 
real line.
\end{proposition}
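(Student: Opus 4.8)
The plan is to treat the two assertions separately: boundedness will follow almost immediately from Proposition~\ref{P:NSW}, while continuity will be obtained by showing that the defining series converges uniformly on every bounded interval, so that its (continuous) partial sums have a continuous limit.

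For boundedness, first observe that the hypothesis $x_{j+1}-x_j\ge q$ forces $|x_j-x_k|\ge q|j-k|\ge q$ whenever $j\ne k$; thus $(x_j)$ is a $q$-separated sequence in $\R=\R^1$. Applying Proposition~\ref{P:NSW} with $d=1$ then yields a constant $\nu=\nu(1,\lambda,q)$ with $\sum_{j\in\Z}\gl(x-x_j)\le\nu$ for every $x\in\R$ (here $\gl(\|x-x_j\|_2)=\gl(x-x_j)$ since $d=1$). Writing $M:=\sup_j|a_j|<\infty$, I would estimate
\[
\Big|\sum_{j\in\Z}a_j\gl(x-x_j)\Big|\le M\sum_{j\in\Z}\gl(x-x_j)\le M\nu,\qquad x\in\R,
\]
which simultaneously shows that the series converges absolutely for each $x$ and that the resulting function is bounded by $M\nu$.

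For continuity it suffices to prove that the series converges uniformly on each interval $[-R,R]$, since the partial sums $\sum_{|j|\le N}a_j\gl(x-x_j)$ are continuous and a uniform limit of continuous functions is continuous. The separation condition gives $|x_j|\ge q|j|-|x_0|$, so $|x_j|\to\infty$ as $|j|\to\infty$. Fixing $R>0$ and restricting to $|x|\le R$, for all sufficiently large $|j|$ (those with $q|j|-|x_0|\ge 2R$) one has $|x-x_j|\ge|x_j|-R\ge\frac{1}{2}|x_j|\ge\frac{1}{2}(q|j|-|x_0|)$, whence
\[
\gl(x-x_j)=e^{-\lambda(x-x_j)^2}\le e^{-\frac{\lambda}{4}(q|j|-|x_0|)^2}.
\]
The right-hand side is independent of $x$ and summable over $j$, so by the Weierstrass $M$-test the tail $\sum_{|j|>N}a_j\gl(x-x_j)$ tends to $0$ uniformly on $[-R,R]$. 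Hence the series converges uniformly there, its sum is continuous on $[-R,R]$, and since $R$ is arbitrary, continuous on all of $\R$.

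The only step requiring genuine care is the uniform estimate on the tail underlying continuity; boundedness is essentially a direct quotation of Proposition~\ref{P:NSW}. I do not anticipate a real obstacle, since the super-exponential decay of the Gaussian dominates the at-most-linear growth of $|x_j|$ with $|j|$, making the majorant in the $M$-test trivially summable.
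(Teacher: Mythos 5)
Your proof is correct and follows essentially the same route as the paper: boundedness is read off from Proposition~\ref{P:NSW} applied to the $q$-separated sequence $(x_j)$, and continuity comes from uniform convergence of the continuous partial sums. The only difference is that you explicitly supply the tail estimate giving uniform convergence on compact intervals, whereas the paper simply asserts uniform convergence as a consequence of Proposition~\ref{P:NSW}; your added detail is harmless (indeed welcome, since the proposition by itself gives only a uniform bound, and uniform convergence on all of $\R$ is not needed for continuity).
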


\begin{proof} Proposition \ref{P:NSW} 
demonstrates that the series in question is uniformly
convergent throughout $\R$, and that the sum
is a bounded function of $x$.  The apparent
continuity of each summand and uniform convergence
imply the continuity of the limit function.
\end{proof}

\begin{remark} \label{R:invertibility}
Suppose that $\lam$ is a fixed
positive number, and let
$(x_j:j\in\Z)$ be a sequence satisfying
the conditions of Proposition \ref{P:1.1}.
The latter conditions imply that
$|x_j-x_k|\ge|j-k|q$ for every pair
of integers $j$ and $k$, so the entries
of the bi-infinite matrix 
$(g_\lam(x_k-x_j))_{k,j\in\Z}$ decay
exponentially away from its main diagonal.
So the matrix is realizable
as the sum of a uniformly
convergent series of diagonal matrices.
Hence it acts as a bounded operator
on every $\ell_p(\Z)$, $1\le p\le\infty$.
Moreover, as the matrix
is also symmetric, 
Theorem \ref{T:NW} ensures that
it is boundedly invertible on $\ell_2(\Z)$.
In particular,
given a 
square-summable sequence $(d_k:k\in\Z)$, there
exists a unique square-summable sequence
$(a(j,\lambda):j\in\Z)$ such that
\[
\sum_{j\in\Z}a(j,\lambda)\gl(x_k-x_j)=d_k, \qquad k\in\Z.
\]
\end{remark}


Suppose now that $(x_j:j\in\Z)$ is a Riesz-basis sequence.
Thus, given $h\in\LtwoT$, 
there exists a square-summable sequence
$(a_j:j\in\Z)$ such that $h(t)\!=\!\sum_{j\in\Z}a_je^{-ix_jt}$
for almost every $t\in[-\pi,\pi]$.  We wish to 
extend this function to $\R$ as follows:

\begin{proposition}\label{P:1.3}
Let $(x_j:j\in\Z)$, $h$, and $(a_j:j\in\Z)$
be as above.  The function $H(u):=\sum_{j\in\Z}
a_je^{-ix_ju}$ is locally square integrable
on $\R$; in particular it is well defined
for almost every real number $u$.
\end{proposition}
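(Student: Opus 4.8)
The plan is to deduce the local square integrability of $H$ from the convergence of the partial sums of its defining series in $L_2$ over every bounded interval, the key ingredient being a Bessel-type bound for the system $(e^{-ix_ju}:j\in\Z)$ on such intervals. Since the coefficient sequence $(a_j)$ is square summable (it comes from the Riesz-basis expansion of $h$), it suffices to show that the partial sums $S_n(u):=\sum_{|j|\le n}a_je^{-ix_ju}$ form a Cauchy sequence in $L_2(I)$ for each bounded interval $I$. This in turn follows once we produce, for every bounded $I$, a constant $C_I$ with
\[
\int_I\Big|\sum_{j\in\Z}c_je^{-ix_ju}\Big|^2\,du\le C_I\sum_{j\in\Z}|c_j|^2
\]
for all finitely supported (hence, by density, all square-summable) sequences $(c_j)$.

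To establish this Bessel bound I would transfer the upper Riesz-basis estimate on $\LtwoT$ to an arbitrary interval by a translation argument. Cover $I$ by finitely many intervals of the form $J_m:=[2\pi m-\pi,\,2\pi m+\pi]$, $m\in\Z$; let $M_I$ denote the number of them required. On each $J_m$ substitute $u=t+2\pi m$ and factor $e^{-ix_ju}=e^{-2\pi imx_j}e^{-ix_jt}$. Because $|e^{-2\pi imx_j}|=1$, multiplying the coefficients by the unimodular factors $e^{-2\pi imx_j}$ leaves the $\ell_2$ norm unchanged, so the upper bound in \eqref{E:0.6b} applied in $\LtwoT$ gives
\[
\int_{J_m}\Big|\sum_{j\in\Z}c_je^{-ix_ju}\Big|^2\,du=\int_{-\pi}^{\pi}\Big|\sum_{j\in\Z}\big(c_je^{-2\pi imx_j}\big)e^{-ix_jt}\Big|^2\,dt\le B^2\sum_{j\in\Z}|c_j|^2.
\]
Summing over the $M_I$ intervals covering $I$ yields the desired estimate with $C_I:=M_IB^2$.

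With the Bessel bound in hand, the tail estimate $\|S_{n'}-S_n\|_{L_2(I)}^2\le C_I\sum_{n<|j|\le n'}|a_j|^2$ shows that $(S_n)$ is Cauchy in $L_2(I)$, hence convergent there; the limits over nested intervals agree on overlaps, so they piece together to a single $H\in L_2^{\mathrm{loc}}(\R)$, determined up to a Lebesgue-null set, whose restriction to $[-\pi,\pi]$ coincides with $h$. Interpreting the series for $H$ as this local $L_2$-limit shows that $H$ is locally square integrable and therefore well defined almost everywhere, as claimed.

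The main obstacle to keep in mind is that the functions $e^{-ix_ju}$ are \emph{not} $2\pi$-periodic in general, since the $x_j$ need not be integers; one cannot simply periodize $h$ and read off the bound. The argument succeeds precisely because a shift by $2\pi m$ affects each coefficient only through a unimodular multiplier, and both sides of the Riesz-basis inequality \eqref{E:0.6b} are invariant under such modulations, allowing the single-interval estimate to be replicated on every translate $J_m$.
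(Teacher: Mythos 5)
Your proposal is correct and follows essentially the same route as the paper: the paper likewise writes the partial sums on each interval $[(2l-1)\pi,(2l+1)\pi]$, substitutes $u=t+2\pi l$ so that the shift is absorbed into unimodular coefficient multipliers $e^{-2\pi i l x_j}$, and applies the upper Riesz-basis bound \eqref{E:1.1} to conclude that the partial sums are Cauchy in $L_2$ of each such interval. The only cosmetic difference is that you phrase the conclusion for an arbitrary bounded interval covered by finitely many translates, whereas the paper works directly with the intervals $[(2l-1)\pi,(2l+1)\pi]$; these are equivalent formulations of local square integrability.
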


\begin{proof} Recall from the introductory
section that there exists a constant $B$ 
such that
\begin{equation}\label{E:1.1}
B^{-2}\sum_{j\in\Z}|c_j|^2\le
\int_{-\pi}^{\pi}|\sum_{j\in\Z}c_j e^{-ix_jt}|^2\,dt
\le B^2
\sum_{j\in\Z}|c_j|^2
\end{equation}
for every square-summable sequence $(c_j:j\in\Z)$.  Let
$u\in[(2l-1)\pi,(2l+1)\pi]$ for some 
integer $l$, and define, for 
every positive integer $N$, the (continuous) function 
$H_N(u):=\sum_{j=-N}^N a_je^{-ix_ju}$.  If $N>M$ are positive
integers, then
\begin{align}\label{E:1.2}
\int_{(2l-1)\pi}^{(2l+1)\pi}|H_N-H_M|^2&=\int_{(2l-1)\pi}^{(2l+1)\pi}
\left|\sum_{|j|=M+1}^N a_je^{-ix_j u}\right|^2\,du \\
\notag &=\int_{-\pi}^{\pi}\left|\sum_{|j|=M+1}^N a_je^{-2\pi ilx_j}e^{-ix_jv}\right|^2\,dv\\
\notag &\le B^2\sum_{|j|=M+1}^N\left|a_j e^{-2\pi ilx_j}\right|^2=
B^2\sum_{|j|=M+1}^N|a_j|^2,
\end{align}
the final inequality coming from \eqref{E:1.1}.  Ergo, the 
square summability of the sequence $(a_j:j\in\Z)$ shows
that $(H_N: N\in\N)$ is a Cauchy sequence, hence
convergent, in $L_2[(2l-1)\pi,(2l+1)\pi]$.  This
proves the desired result.  
\end{proof}
For future reference, we note
that the argument leading up to 
\eqref{E:1.2} also provides the estimate
\begin{equation}\label{E:1.3}
\int_{(2l-1)\pi}^{(2l+1)\pi}|H|^2=\lim_{N\to\infty}
\int_{(2l-1)\pi}^{(2l+1)\pi}|H_N|^2\le
B^2\sum_{j\in\Z}|a_j|^2,\qquad l\in\Z,
\end{equation}
whence the Bunyakovskii--Cauchy--Schwarz Inequality
implies that
\begin{equation}\label{E:1.4}
\int_{(2l-1)\pi}^{(2l+1)\pi}|H|\le \sqrt{\Lwoodone}\,B
\left(\sum_{j\in\Z}|a_j|^2\right)^{1/2},\qquad l\in\Z.
\end{equation}

The next result is the first of the two main
offerings of the current section.

\begin{theorem}\label{T:1.4}
Suppose that $\lambda$ is a fixed positive number, and let
$(x_j:j\in\Z)$ be a Riesz-basis sequence.  Assume that
${\overline a}:=(a_j:j\in\Z)$ is a square-summable sequence.  The following
hold:
\item{(i)} The function 
\[
s({\overline a},x)=s(x):=\sum_{j\in\Z}a_j\gl(x-x_j), \qquad x\in\R,
\]
belongs to $C(\R)\cap\LtwoR$.
\item{(ii)} The function 
\[
{\tilde s}({\overline a},u)={\tilde s}(u):=e^{-u^2/(4\lambda)}\sum_{j\in\Z}a_je^{-ix_ju}
\]
is well defined for almost every real number $u$, and 
${\tilde s}\in\LtwoR\cap\LoneR$.
\item{(iii)}  
\begin{equation*}
{\cF}[s]=\sqrt{\frac{\pi}{\lambda}}\;{\tilde s}.
\end{equation*}
\item{(iv)} The map ${\overline a}:=(a_j:j\in\Z)\mapsto
s({\overline a},x):=
\sum_{j\in\Z}a_j\gl(x-x_j)$, $x\in\R$, is a bounded
linear transformation from $\ell_2(\Z)$ into $\LtwoR$.
\end{theorem}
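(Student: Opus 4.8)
The plan is to establish the four assertions in a logical order that lets later parts reuse earlier ones. I would begin with part (ii), since it is essentially a consequence of machinery already in place. The sum $\sum_{j\in\Z}a_je^{-ix_ju}$ is precisely the function $H(u)$ studied in Proposition \ref{P:1.3}, so it is well defined almost everywhere and locally square integrable. Multiplying by the rapidly decaying factor $e^{-u^2/(4\lambda)}$ then upgrades local integrability to global integrability. Concretely, I would split $\R$ into the intervals $[(2l-1)\pi,(2l+1)\pi]$, on each of which the Gaussian factor is bounded by $e^{-((2|l|-1)\pi)^2/(4\lambda)}$ for $l\neq 0$, and then invoke the uniform bounds \eqref{E:1.3} and \eqref{E:1.4} on $\int_{(2l-1)\pi}^{(2l+1)\pi}|H|^2$ and $\int_{(2l-1)\pi}^{(2l+1)\pi}|H|$. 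Summing the resulting geometric-type series over $l$---a step where Proposition \ref{P:1.0} is tailor-made---shows $\tilde s\in\LtwoR\cap\LoneR$.

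For part (i), continuity and boundedness are immediate from Proposition \ref{P:1.1}, since a square-summable sequence is bounded and the Riesz-basis sequence satisfies the gap condition \eqref{E:0.7}. The substantive claim is square integrability, and here I would like to avoid estimating $s$ directly and instead obtain it as a byproduct of the Fourier-transform identity in (iii). So I would prove (iii) next. The natural route is to verify the identity first for finitely supported $\overline a$, where $s$ is a genuine finite linear combination of translated Gaussians: using \eqref{E:1.0} together with the elementary shift rule $\cF[\gl(\cdot-x_j)](u)=e^{-ix_ju}\widehat{\gl}(u)$, one gets $\cF[s]=\sqrt{\pi/\lambda}\,\tilde s$ exactly on the finite level. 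Then I would pass to the limit: as $\overline a$ ranges over finitely supported sequences approximating a general square-summable sequence in $\ell_2$, the right-hand sides $\tilde s$ converge in $\LtwoR$ by part (ii) and the Riesz-basis lower bound, so the $\cF[s]$ also converge in $\LtwoR$. Since $\cF$ is a linear isomorphism on $\LtwoR$ by \eqref{E:0.1}, the functions $s$ themselves converge in $\LtwoR$ to some limit whose transform is $\sqrt{\pi/\lambda}\,\tilde s$. Identifying that $\LtwoR$-limit with the pointwise sum $s$ defined in (i)---which requires matching the almost-everywhere limit with the pointwise-defined function---then simultaneously yields $s\in\LtwoR$, completing (i), and establishes (iii).

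Part (iv) is then essentially free: linearity of $\overline a\mapsto s$ is clear, and boundedness follows by combining the norm identity \eqref{E:0.1}, the identity (iii), the Gaussian normalization, and the upper Riesz-basis bound. Quantitatively, $\|s\|_{\LtwoR}^2=(2\pi)^{-1}\|\cF[s]\|_{\LtwoR}^2=(2\pi)^{-1}(\pi/\lambda)\|\tilde s\|_{\LtwoR}^2$, and $\|\tilde s\|_{\LtwoR}^2$ is controlled by $\sum_j|a_j|^2$ through the same summation-over-$l$ estimate used for (ii); this gives the desired operator bound in terms of $\lambda$ and the Riesz constant $B$.

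The main obstacle I anticipate is the limiting argument in (iii), specifically reconciling the two descriptions of $s$. The transform identity is proved at the level of $\LtwoR$-convergence, which only determines $s$ almost everywhere, whereas part (i) defines $s$ as a genuine pointwise (indeed uniform) sum. The delicate point is to confirm that the $\LtwoR$-limit of the finite-sum approximants coincides almost everywhere with the pointwise sum $\sum_j a_j\gl(x-x_j)$; this rests on showing the partial sums converge to $s$ both pointwise (via Proposition \ref{P:NSW}) and in $\LtwoR$, so that the two limits agree. Once that identification is secured, the remaining estimates are the routine geometric bookkeeping already foreshadowed by Proposition \ref{P:1.0} and inequalities \eqref{E:1.3}--\eqref{E:1.4}.
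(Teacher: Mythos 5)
Your proposal is correct and follows essentially the same route as the paper: the paper likewise computes the Fourier transform of the finite sections $s_N$ exactly via \eqref{E:1.0}, uses Plancherel to convert $\|s_N-s_M\|_{\LtwoR}$ into a weighted integral of $|H_{M,N}|^2$, decomposes $\R$ into the intervals $[(2l-1)\pi,(2l+1)\pi]$, and controls the sum with \eqref{E:1.2}--\eqref{E:1.4} and Proposition \ref{P:1.0}, with (iv) then following from \eqref{E:0.1}, (iii), and \eqref{E:1.7}. The identification of the $\LtwoR$-limit of the $s_N$ with the pointwise sum, which you rightly flag as the delicate point, is handled in the paper exactly as you suggest (uniform convergence via Proposition \ref{P:NSW} plus $\LtwoR$-Cauchyness), so your reordering of the parts changes the presentation but not the substance.
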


\begin{proof} (i) The continuity of $s$ on $\R$ follows at once
from the first inequality in \eqref{E:0.7} and
Proposition \ref{P:1.1}. Define $s_N(x):=
\sum_{j=-N}^N a_j\gl(x-x_j)$, $x\in\R$, $N\in\N$.  
As
$s_N\in\LoneR\cap\LtwoR$ for every positive integer $N$, equation
\eqref{E:0.1}, \eqref{E:1.0}, and a standard
Fourier-transform calculation provide the 
following relations: if $N>M$ are positive integers, then
\begin{align}\label{E:1.5}
2\pi\|s_N-s_M\|_{\LtwoR}^2&=\|{\widehat{s_N}}-{\widehat{s_M}}\|_{\LtwoR}^2\\
\notag &=\frac{\pi}{\lambda}
\int_{-\infty}^{\infty}e^{-u^2/(2\lambda)}\left|\sum_{|j|=M+1}^N
a_je^{-ix_ju}\right|^2\,du\\
\notag &=\frac{\pi}{\lambda}\sum_{l\in\Z}\int_{(2l-1)\pi}^{(2l+1)\pi}e^{-u^2/(2\lambda)}
\left|\sum_{|j|=M+1}^Na_je^{-ix_ju}\right|^2\,du.
\end{align}
Let $H_{M,N}(u):=\sum_{|j|=M+1}^N a_j e^{-ix_ju}$, $u\in\R$.
Using the estimates $e^{-u^2/(2\lambda)}\le1$ for $u\in[-\pi,\pi]$, and
$e^{-u^2/(2\lambda)}\le e^{-(2|l|-1)^2\pi^2/(2\lambda)}$ 
for $(2l-1)\pi\le u\le(2l+1)\pi$, $l\in\Z\setminus\{0\}$, we find
from \eqref{E:1.5} that  
\begin{align}\label{E:1.6}
2\pi\|s_N\!-s_M\|_{\LtwoR}^2
  \le\!\frac{\pi}{\lambda}
&\Big[\left\|H_{M,N}\right\|_{\LtwoT}^2\\
&+\!\!\!\!
\sum_{l\in\Z\setminus\{0\}}\!e^{-(2|l|-1)^2\pi^2/(2\lambda)}
\left\|H_{M,N}\right\|_{L_2[-(2l-1)\pi,(2l+1)\pi]}^2\Big].\notag
\end{align}
Now let $B$ be the constant satisfying \eqref{E:1.1} 
and use \eqref{E:1.2} to estimate
each of the integrals on the right side of \eqref{E:1.6}.
This yields the relation
\begin{equation*}
2\pi\|s_N-s_M\|_{\LtwoR}^2\le\left(\frac{\pi}{\lambda}\right)
B^2\left(\sum_{|j|=M+1}^N|a_j|^2\right)
\left[1+\sum_{l\in\Z\setminus\{0\}}e^{-(2|l|-1)^2\pi^2/(2\lambda)}
\right],
\end{equation*}
whence Proposition \ref{P:1.0} leads to the estimate
\begin{equation*}
2\pi\|s_N-s_M\|_{\LtwoR}^2\le\left(\frac{B^2\pi}{\lambda}\right)
\left(\sum_{|j|=M+1}^N|a_j|^2\right)\left[1+\kappa(\pi^2/(2\lambda))\right].
\end{equation*}
As $(a_j:j\in\Z)$ is square summable, we find that 
$(s_N:N\in\N)$ is a Cauchy sequence in $\LtwoR$, and hence
that $s\in\LtwoR$ as promised.

(ii) Let $H(u):=\sum_{j\in\Z}a_je^{-ix_ju}$.  As observed
in Proposition \ref{P:1.3}, $H$ is defined almost
everywhere on $\R$, so the same is true of $\tilde s$
as well.  Now the argument in (i), combined with
\eqref{E:1.3}, shows that
\begin{equation}\label{E:1.7}
\|{\tilde s}\|_{\LtwoR}^2\le B^2\left(
\sum_{j\in\Z}|a_j|^2\right)\left[
1+\kappa(\pi^2/(2\lambda))\right],
\end{equation}
whilst a slight, but obvious, variation on the theme, coupled with
\eqref{E:1.4}, demonstrates that
\begin{equation}\label{E:1.8}
\|{\tilde s}\|_{\LoneR}\le \sqrt{\Lwoodone}\,B
\left(
\sum_{j\in\Z}|a_j|^2\right)^{1/2}\left[
1+\kappa(\pi^2/(2\lambda))\right],
\end{equation}
and this completes the proof.

(iii) Let $(s_N:N\in\N)$ be the sequence defined in the proof of (i).
As each $s_N\in\LoneR\cap\LtwoR$ and $\lim_{N\to\infty}
\|s_N-s\|_{\LtwoR}=0$, it suffices to show, thanks to 
\eqref{E:0.1}, that 
$\lim_{N\to\infty}\|{\widehat s_N}-\sqrt{(\pi/\lambda)}\;{\tilde s}\|_{\LtwoR}=0$.
Calculations similar to the one carried out in (i) show that 
\begin{align*}
\|{\widehat s_N}-\sqrt{(\pi/\lambda)}\;{\tilde s}\|_{\LtwoR}^2&=
\frac{\pi}{\lambda}\int_{-\infty}^{\infty}e^{-u^2/(2\lambda)}
\left|\sum_{|j|=N+1}^\infty a_je^{-ix_ju}\right|^2\,du\\
&\le
\frac{B^2\pi}{\lambda}\left(\sum_{|j|=N+1}^\infty|a_j|^2\right)^2
\left[
1+\kappa(\pi^2/(2\lambda))\right],
\end{align*}
and the last term approaches zero as $N$ tends to 
infinity,
because the
sequence $(a_j:j\in\Z)$ is square summable.

(iv) The linearity of the map is evident, and
that it takes $\ell_2(\Z)$ into $\LtwoR$
is the content of part (i).  Now
\eqref{E:0.1}, part (iii) above, and
\eqref{E:1.7} combine to yield the relations
\begin{equation*}
\Lwoodone\left\|s({\overline a},\cdot)\right\|_{\LtwoR}^2=
\left\|\sqrt{\frac{\pi}{\lam}}\;{\tilde s}({\overline a},\cdot)\right\|_{\LtwoR}^2\le
\frac{B^2\pi}{\lam}\left[
1+\kappa(\pi^2/(2\lambda))\right]\left\|{\overline a}\right\|_{\ell_2(\Z)}^2.
\end{equation*}
\end{proof}

This next result points to 
a useful interplay between Riesz-basis sequences
and bandlimited functions (see, for example,
\cite[pp.~29-32]{Yo}). 
It
serves as 
a prelude to the second main theorem
of this section.

\begin{proposition}\label{P:1.5} 
Suppose that $(x_j:j\in\Z)$ is 
a Riesz-basis sequence, and that $f\in\PW$.  
Then the (sampled) sequence  
$(f(x_j):j\in\Z)$ is square summable.  Moreover,
there is an absolute constant $C$ -- depending
only on $(x_j)$, but not on $f$ -- such that
$\sum_{j\in\Z}|f(x_j)|^2\le C^2\|f\|_{\LtwoR}^2$.
\end{proposition}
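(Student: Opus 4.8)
The plan is to exploit the duality between the sampling functional $f \mapsto (f(x_j))$ and the Riesz-basis property of the exponentials $(e^{-ix_jt})$ in $\LtwoT$. The key observation is that, for $f\in\PW$, the inversion formula \eqref{E:0.5} expresses each sampled value $f(x_j)$ as a Fourier coefficient of $\FT{f}$ against precisely the exponential $e^{ix_jt}$ on $[-\pi,\pi]$. Concretely, since $\FT{f}$ is supported in $[-\pi,\pi]$, we have
\begin{equation*}
f(x_j)=\frac{1}{2\pi}\int_{-\pi}^{\pi}\FT{f}(t)\,e^{ix_jt}\,dt,\qquad j\in\Z,
\end{equation*}
so that $2\pi f(x_j)$ is the inner product in $\LtwoT$ of $\FT{f}$ with the conjugate exponential $e_j(t)=e^{-ix_jt}$ (up to the usual complex-conjugation bookkeeping). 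Thus the sampled sequence is, up to a harmless constant, the sequence of generalized Fourier coefficients of the single function $\FT{f}\in\LtwoT$ with respect to the Riesz basis $(e_j)$.

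First I would make the identification above rigorous, taking care that \eqref{E:0.5} has been normalized to hold for \emph{every} real $t$ (as arranged in the introduction), so evaluating at $t=x_j$ is legitimate. Next I would invoke the defining inequality \eqref{E:0.6b} for a Riesz basis, in the form of the lower bound already recorded in \eqref{E:1.1}: for any square-summable $(c_j)$ one has $B^{-2}\sum_j|c_j|^2\le \|\sum_j c_j e_j\|_{\LtwoT}^2$. A standard and fundamental fact about Riesz bases is that the coefficient functionals are themselves uniformly bounded; equivalently, the sequence of coefficients of any $h\in\LtwoT$ against the basis satisfies $\sum_j|\langle h,\tilde e_j\rangle|^2\le B^2\|h\|_{\LtwoT}^2$, where $(\tilde e_j)$ is the (biorthogonal) dual Riesz basis, which is again a Riesz basis with the same constant $B$. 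Applying this to $h=\FT{f}$ yields $\sum_j|f(x_j)|^2\le (2\pi)^{-2}B^2\|\FT{f}\|_{\LtwoT}^2$, and then \eqref{E:0.1} converts the right-hand side into a multiple of $\|f\|_{\LtwoR}^2$, giving the claim with $C$ depending only on $B$ and the normalizing constants.

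The one point requiring genuine care — and the step I expect to be the main obstacle — is justifying that the sampled values $f(x_j)$ really are the coefficients of $\FT{f}$ against the \emph{dual} basis, rather than naively against $(e_j)$ itself. The inner products $\langle \FT{f}, e_j\rangle$ are the ``analysis'' coefficients, and the correct uniform upper bound $\sum_j|\langle h,e_j\rangle|^2\le B^2\|h\|^2$ is exactly the upper Riesz inequality applied to the dual basis; one must cite or reprove the elementary lemma that a Riesz basis with constant $B$ has a dual basis that is Riesz with the same constant, so that the analysis map associated with $(e_j)$ is bounded with norm at most $B$. This is the standard duality for Riesz bases and is available from \cite{Yo}, which the authors reference precisely here. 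Once this boundedness of the analysis operator is in hand, the square summability and the explicit constant follow immediately, and the absolute constant $C$ is seen to depend only on the Riesz-basis constant $B$ of $(x_j)$, hence only on the sequence $(x_j)$ and not on $f$.
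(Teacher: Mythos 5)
Your proposal is correct and follows essentially the same route as the paper: both identify $2\pi f(x_j)=\langle \FT{f},e_j\rangle$ via the inversion formula \eqref{E:0.5}, then bound $\sum_j|\langle \FT{f},e_j\rangle|^2$ by expanding $\FT{f}$ in the biorthogonal dual Riesz basis $(\tilde e_j)$ and applying the lower Riesz inequality \eqref{E:0.6b} to that dual system, finishing with Parseval \eqref{E:0.1}. The point you flag as the main obstacle — that the $\langle \FT{f},e_j\rangle$ are coefficients with respect to the dual basis, which is itself a Riesz basis — is exactly the step the paper makes explicit in \eqref{E:1.11}.
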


\begin{proof}
Let 
\begin{equation}\label{E:1.9}
\langle h_1,h_2\rangle:=\int_{-\pi}^\pi h_1{\overline h_2}, \qquad
h_1,h_2\in\LtwoT,
\end{equation}
denote the standard inner product in $\LtwoT$.
Let $e_j(t):=e^{-ix_jt}$, $j\in\Z$, $t\in[-\pi,\pi]$, so that
\eqref{E:0.5} implies the identities
\begin{equation}\label{E:1.10}
\Lwoodone f(x_j)=\langle{\mathcal F}[f],e_j\rangle, \qquad j\in\Z.
\end{equation}
 Letting $(\tilde e_j:j\in\Z)$ be the co-ordinate functionals
 of  $( e_j:j\in\Z)$ (which means
 that $h=\sum_j \langle h,\tilde e_j\rangle e_j$ for any $h\in L_2[\-\pi,\pi]$),
 it follows that $(\tilde e_j:j\in\Z)$ is also a Riesz basis whose co-ordinate functionals
 are $( e_j:j\in\Z)$. Thus,
\begin{equation}\label{E:1.11}
g=\sum_{j\in\Z}\langle g,e_j\rangle{\tilde e}_j,\quad g\in\LtwoT.
\end{equation}
  So \eqref{E:0.6b} provides a universal
constant ${\tilde B}$ such that 
\begin{equation*}
\sum_{j\in\Z}|c_j|^2\le {\tilde B}^2\left\|\sum_{j\in\Z}c_j{\tilde e}_j\right\|_{\LtwoT}^2
\end{equation*}
for every square summable sequence $(c_j:j\in\Z)$.  Hence 
\eqref{E:1.10} and \eqref{E:1.11} imply that
\begin{equation*}
4\pi^2\sum_{j\in\Z}|f(x_j)|^2\le{\tilde B}^2\left\|{\mathcal F}[f]\right\|_{\LtwoT}^2  
=\Lwoodone{\tilde B}^2\|f\|_{\LtwoR}^2,
\end{equation*}
the final equation stemming from \eqref{E:0.1}, and the fact that
${\mathcal F}[f]=0$ almost everywhere in $\R\setminus[-\pi,\pi]$.
\end{proof}

We now state the second of the two main results in
this section.  Most of our work has already
been accomplished; what
remains is to recast the findings in the context
of our interpolation problem.

\begin{theorem}\label{T:1.6}
Let $\lambda$ be a fixed positive number, and  
let 
$(x_j:j\in\Z)$ be a Riesz-basis sequence.
The following hold:
\item{(i)} Given $f\in\PW$, there exists
a unique square-summable sequence $(a(j,\lambda):j\in\Z)$
such that 
\begin{equation*}
\sum_{j\in\Z}a(j,\lambda)g_\lambda(x_k-x_j)=f(x_k), \qquad k\in\Z.
\end{equation*}
\item{(ii)} Let $f$ and $(a(j,\lambda):j\in\Z)$ be as in (i).
The Gaussian interpolant to $f$ at the points $(x_j:j\in\Z)$,
to wit,
\begin{equation*}
\Il(f)(x)=\sum_{j\in\Z}a(j,\lambda)g_\lambda(x-x_j), \qquad x\in\R,
\end{equation*}
belongs to $C(\R)\cap\LtwoR$.
\item{(iii)} Let $f$ and $\Il(f)$ be as above. 
The Fourier transform of $\Il(f)$
is given by 
\begin{equation*}
{\mathcal F}[\Il(f)](u)=\sqrt{\frac{\pi}{\lambda}}\,e^{-u^2/(4\lambda)}
\sum_{j\in\Z}a(j,\lambda)e^{-ix_ju}=:
\sqrt{\frac{\pi}{\lambda}}\,e^{-u^2/(4\lambda)}\Psi_\lam(u)
\end{equation*}
for almost every real number $u$.  Moreover, 
${\mathcal F}[\Il(f)]\in\LtwoR\cap\LoneR$.
\item{(iv)} If $f$ and $\Il(f)$ are as above, then
\begin{equation*}
\Il(f)(x)=\frac{1}{\Lwoodone}\int_{-\infty}^{\infty}{\mathcal F}[\Il(f)](u)e^{ixu}\,du,
\qquad x\in\R.
\end{equation*}
In particular, 
$\Il(f)\in C_0(\R):=\lbrace g\in\CR:\lim_{|x|\to\infty}g(x)=0\rbrace$.
\item{(v)} The Gaussian interpolation operator $\Il$ is
a bounded linear operator from $\PW$
to $\LtwoR$.  That is, 
the map $\PW\ni f\mapsto\Il(f)$ is linear, and 
there exists a positive constant $D$,
depending only on $\lambda$ and $(x_j:j\in\Z)$, such that
\begin{equation*}
\left\|\Il(f)\right\|_{\LtwoR}\le D\left\|f\right\|_{\LtwoR}
\\
\end{equation*}
for every $f\in\PW$.  
\end{theorem}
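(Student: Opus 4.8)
The proof assembles the machinery built in this section, each clause following by feeding the appropriate earlier result into the next. The plan is to realize $\Il$ as the composition of three linear maps: the sampling map $f\mapsto(f(x_k):k\in\Z)$, the coefficient map $(f(x_k))\mapsto(a(j,\lambda))$ obtained by inverting the Gaussian collocation matrix, and the synthesis map $\overline a\mapsto s(\overline a,\cdot)$ of Theorem \ref{T:1.4}. For (i), Proposition \ref{P:1.5} guarantees that the data $(f(x_k):k\in\Z)$ is square summable, and Remark \ref{R:invertibility} — whose hypotheses hold because \eqref{E:0.7} supplies a separation constant $q$ — asserts that the symmetric matrix $(\gl(x_k-x_j))_{k,j\in\Z}$ is boundedly invertible on $\ell_2(\Z)$; hence there is a unique square-summable $(a(j,\lambda))$ solving the system $\sum_j a(j,\lambda)\gl(x_k-x_j)=f(x_k)$.

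Clause (ii) is then immediate from Theorem \ref{T:1.4}(i) applied to $\overline a=(a(j,\lambda))$; since the defining series converges to a continuous function, its value at $x_k$ is $\sum_j a(j,\lambda)\gl(x_k-x_j)=f(x_k)$, so the interpolatory identities hold as well. Clause (iii) is a direct translation of parts (ii) and (iii) of Theorem \ref{T:1.4}, which give $\FT{\Il(f)}=\sqrt{\pi/\lambda}\,\tilde s$ with $\tilde s\in\LtwoR\cap\LoneR$ — exactly the stated formula, with $\Psi_\lam(u)=\sum_j a(j,\lambda)e^{-ix_ju}$. For (iv) I would invoke the inversion formula \eqref{E:0.4}: by (ii) we have $\Il(f)\in\CR\cap\LtwoR$ and by (iii) we have $\FT{\Il(f)}\in\LoneR$, so its hypotheses are met and the displayed representation follows verbatim. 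That $\Il(f)\in C_0(\R)$ then results from the Riemann--Lebesgue property recorded in the introduction, since $2\pi\,\Il(f)(x)$ is the ordinary Fourier transform of the integrable function $\FT{\Il(f)}$ evaluated at $-x$, and such transforms vanish at infinity.

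The real content is clause (v). Linearity is clear, as each of the three maps is linear. For the norm bound I would estimate the three factors separately: Proposition \ref{P:1.5} gives $\|(f(x_k))\|_{\ell_2}\le C\|f\|_{\LtwoR}$ with $C$ depending only on $(x_j)$; Theorem \ref{T:1.4}(iv) bounds the synthesis map by $\sqrt{(B^2\pi/(2\lambda))[1+\kappa(\pi^2/(2\lambda))]}$; and the coefficient map has operator norm at most $1/\theta$, where $\theta$ is the constant of Theorem \ref{T:NW} with $d=1$ and $q$ as in \eqref{E:0.7}. Taking $D$ to be the product of these quantities yields the claim, and $D$ depends only on $\lambda$ and $(x_j)$. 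The step requiring the most care is this middle factor: the symmetric, boundedly-invertible matrix $A:=(\gl(x_k-x_j))$ satisfies $\langle A\xi,\xi\rangle\ge\theta\|\xi\|_{\ell_2}^2$ by Theorem \ref{T:NW}, and one must extract from this quadratic-form lower bound the operator-norm estimate $\|A^{-1}\|\le1/\theta$ (valid since $A$ is bounded and self-adjoint with spectrum bounded below by $\theta$), thereby confirming that $\theta$, and hence $D$, is controlled purely by $\lambda$ and the geometry of the sampling set. I emphasize that this is only an existence-of-$D$ statement; the more delicate task of choosing the bound independently of $\lambda$ is reserved for Section 3.
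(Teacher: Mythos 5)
Your proposal follows the paper's proof essentially verbatim: (i)--(iv) are cited to the same ingredients (Proposition \ref{P:1.5}, Remark \ref{R:invertibility} via \eqref{E:0.7}, Theorem \ref{T:1.4}, the inversion formula \eqref{E:0.4} and the Riemann--Lebesgue lemma), and (v) is the same three-factor decomposition sampling~$\circ$~inversion~$\circ$~synthesis that the paper expresses with Big-O constants, with your explicit $\|A^{-1}\|\le 1/\theta$ merely filling in the detail the paper delegates to Remark \ref{R:invertibility}. The only blemish is a stray factor of $\pi$ in your quoted bound for the synthesis map (Theorem \ref{T:1.4}(iv) gives $\sqrt{(B^2/(2\lambda))[1+\kappa(\pi^2/(2\lambda))]}$), which does not affect the argument.
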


\begin{proof}
Assertion (i) follows from the first inequality
in \eqref{E:0.7}, 
Remark \ref{R:invertibility}, and Proposition
\ref{P:1.5}. Assertion (ii) obtains from 
part (i) of Theorem \ref{T:1.4}, whilst assertion (iii)
is a consequence of parts (ii) and (iii) of 
Theorem \ref{T:1.4}. Assertion (iv)
follows from the fact that 
$\Il(f)$ is continuous throughout $\R$, 
that ${\mathcal F}[\Il(f)]\in\LtwoR\cap\LoneR$, and 
equation \eqref{E:0.4}.  
Moreover, this representation and equation \eqref{E:wrongdef}
show that 
$2\pi\Il(f)(x)=\widehat{{\FT{\Il(f)}}}(-x)$ for every
real number $x$.  So the Riemann--Lebesgue
Lemma \cite[Theorem 4A]{Go} ensures that
$\Il\in C_0(\R)$.
As to (v), 
let
$T$ be the matrix $(g_\lam(x_k-x_j))_{j,k\in\Z}$,
let $f\in\PW$, and let ${\overline d}:=(f(x_j):j\in\Z)$.
Then $a(j,\lambda)$ is the $j$-th component 
of the vector $T^{-1}{\overline d}$, and this 
demonstrates that $\Il$ is linear.  Now 
part (iv) of Theorem \ref{T:1.4} asserts
that
\begin{equation}\label{E:1.12}
\left\|\Il(f)\right\|_{\LtwoR}=
O\left(\left\|T^{-1}{\overline d}\right\|_{\ell_2(\Z)}\right)
=O\left(\left\|{\overline d}\right\|_{\ell_2(\Z)}\right),
\end{equation}
where the Big-O constant depends only on $\lambda$ and 
the Riesz-basis sequence $(x_j:j\in\Z)$.
Furthermore, Proposition \ref{P:1.5} reveals that
\begin{equation}\label{E:1.13}
\left\|{\overline d}\right\|_{\ell_2(\Z)}=
O\left(\left\|f\right\|_{\LtwoR}\right),
\end{equation}
with the Big-O constant here depending only on
$(x_j:j\in\Z)$.
Combining \eqref{E:1.12} with \eqref{E:1.13}
finishes the proof.
\end{proof}

\section{Uniform boundedness of the interpolation operators}\label{S:2}

In the final result of the previous section, it was shown
that, for a fixed 
scaling parameter $\lam$,  and a 
fixed Riesz-basis sequence $(x_j:j\in\Z)$,  
the 
associated interpolation operator $\Il$ is a continuous
linear map from $\PW$ into $\LtwoR$.  As expected,
the norm of this operator was shown to be bounded
by a number which depends on both the scaling parameter and 
the choice of the Riesz-basis sequence.  The goal in the current
section is to demonstrate that, 
if 
the scaling parameter is
bounded above by a fixed number (taken here
to be 1 for convenience), then the norm
of $\Il$ can
be bounded by a number which depends only
on $(x_j:j\in\Z)$.  The proofs
in this section (as well as in the next) are 
patterned after \cite{LM}.  

Let $(x_j:j\in\Z)$ be a Riesz-basis sequence, and let 
$B$ be the associated constant satisfying 
the inequalities in \eqref{E:1.1}.  Given
$h\in\LtwoT$, there is a square-summable sequence $(a_j:j\in\Z)$
such that 
$h(t)=\sum_{j\in\Z}a_je^{-ix_jt}$ for almost every
$t\in[-\pi,\pi]$.  Let $H$ denote the extension
of $h$ to almost all of $\R$, as considered
in Proposition \ref{P:1.3}.  Given an integer $l$,
we define the following linear map $A_l$
on $\LtwoT$:
\begin{equation}\label{E:2.0}
A_l(h)(t):=H(t+2\pi l)=\sum_{j\in\Z}a_je^{-ix_j(t+2\pi l)}
\end{equation}
for almost every $t\in[-\pi,\pi]$.    
We see from \eqref{E:1.3} and \eqref{E:1.1} that
\begin{equation}\label{E:2.1}
\left\|A_lh\right\|_{\LtwoT}^2=
\left\|H\right\|_{L_2[(2l-1)\pi,(2l+1)\pi]}^2\le
B^2\sum_{j\in\Z}|a_j|^2\le
B^4\left\|h\right\|_{\LtwoT}^2.
\end{equation}
Thus every $A_l$ is a bounded operator from
$\LtwoT$ into itself; moreover,
the associated operator norms of these operators
are uniformly bounded:
\begin{equation}\label{E:2.2}
\left\|A_l\right\|\le B^2.
\end{equation}

In what follows, we shall assume that 
$(x_j:j\in\Z)$ is a (fixed) Riesz-basis sequence,
and let $e_j(t):=e^{-ix_j t}$, $t\in\R$,
$j\in\Z$.  We also denote by
$\langle\cdot,\cdot\rangle$ the standard
inner product in $\LtwoT$, as defined via
\eqref{E:1.9}.  Our first main
task now is to 
exploit the presence of the Riesz basis
$(e_j:j\in\Z)$ in $\LtwoT$ to 
find an effective
representation for the Fourier transform
of the Gaussian interpolant to a 
given bandlimited function, on the 
interval $[-\pi,\pi]$.
We begin
with a pair of preliminary observations:

\begin{lemma}\label{L:2.0}
Let $(x_j)$ and $(e_j)$ be as above,
and let $f$ be a given function
in $\PW$.
If $\phi\in\LtwoT$ satisfies the 
conditions
\begin{equation}\label{E:2.3}
\Lwoodone f(x_k)=\int_{-\pi}^{\pi}\phi(t)e^{-ix_kt}\,dt, \qquad
k\in\Z,
\end{equation}
then ${\mathcal F}[f]$ agrees with $\phi$ in $\LtwoT$.
\end{lemma}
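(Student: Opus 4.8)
The plan is to turn the problem into a statement about the completeness (totality) of the Riesz basis $(e_j)$ in $\LtwoT$, and then simply read off the conclusion. Note first that the comparison even makes sense: since $f\in\PW$, its transform $\FT{f}$ lies in $\LtwoR$ and vanishes almost everywhere outside $[-\pi,\pi]$, so the restriction of $\FT{f}$ to $[-\pi,\pi]$ is a bona fide element of $\LtwoT$ that may be pitted against $\phi$.

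The first step is to express the sampled values $f(x_k)$ as pairings against the exponential system. By \eqref{E:1.10} --- which is nothing but \eqref{E:0.5} evaluated at $x_k$ and rewritten through the inner product \eqref{E:1.9} --- one has $\Lwoodone f(x_k)=\langle\FT{f},e_k\rangle$ for every $k\in\Z$. The standing hypothesis \eqref{E:2.3} asserts that $\Lwoodone f(x_k)$ is produced in the very same way by $\phi$, that is, $\phi$ pairs against the same exponentials to yield the identical numbers $\Lwoodone f(x_k)$. Subtracting, the single function $g:=\FT{f}-\phi\in\LtwoT$ satisfies $\langle g,e_k\rangle=0$ for all $k\in\Z$.

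The decisive step is to deduce $g=0$. For this I would invoke the Riesz-basis hypothesis on $(e_j)$: by \eqref{E:0.6} every element of $\LtwoT$ is an $L_2$-limit of finite linear combinations of the $e_j$, so their finite span is dense in $\LtwoT$. A function orthogonal to each $e_k$ is then orthogonal to a dense subspace, and hence is the zero element; applied to $g$ this yields $\FT{f}=\phi$ in $\LtwoT$, as claimed. The one point I would watch carefully --- and the only place any subtlety hides --- is the bookkeeping of the exponential convention when passing from the integral in \eqref{E:2.3} to the inner product of \eqref{E:1.10}: one must line up $e^{-ix_kt}$ (resp.\ $\overline{e_k(t)}=e^{ix_kt}$) with the correct slot of $\langle\cdot,\cdot\rangle$, using if necessary that the conjugated system $(\overline{e_j})$ is again a Riesz basis and therefore also complete. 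Beyond this routine matching no estimate is required --- the entire force of the lemma is the completeness of the basis.
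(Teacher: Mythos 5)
Your proof is correct and follows essentially the same route as the paper's: both reduce \eqref{E:2.3} to the statement that $\FT{f}-\phi$ is annihilated by every $e_k$ and then conclude by completeness of the Riesz basis, the paper via the biorthogonal expansion \eqref{E:1.11} and you via density of the span of the $e_j$. Your explicit attention to the conjugation convention when matching \eqref{E:2.3} with the inner product in \eqref{E:1.10} is warranted (the paper glosses over it), and your remedy --- that the conjugated system is again a complete Riesz basis --- is sound.
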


\begin{proof} Equations \eqref{E:1.10} and 
\eqref{E:2.3} reveal that 
$\langle{\mathcal F}[f],e_k\rangle=\langle \phi,e_k \rangle$
for every integer $k$, and the required result follows
from \eqref{E:1.11}.  
\end{proof}

\begin{lemma}\label{L:2.1}
Let
$(x_j)$ and $(e_j)$ be as above, and let 
$B$ be the constant satisfying \eqref{E:1.1}.
Let $h\in\LtwoT$, and let $\alpha>0$. Define
$$\phi_l = A_l^{{}^*}\left(
e^{-\alpha(\cdot+\Lwoodone l)^2}A_l(h)\right),\quad k\in\Z.$$
Then 
\begin{equation}\label{E:2.4}
\|\phi_0 \|_{\LtwoT}\le\|h\|_{\LtwoT}
\hbox{ and }
\sum_{l\in{\Z\setminus\{0\}}}\left\|
 \phi_l \right\|_{\LtwoT}\le\left\|h\right\|_{\LtwoT}
B^4\kappa(\pi^2\alpha),
\end{equation}
where $\kappa$ is the familiar function from 
Proposition \ref{P:1.0}.  In particular, the series
$\sum_{l\in\Z}\phi_l$ converges 
in $\LtwoT$.
\end{lemma}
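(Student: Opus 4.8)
The plan is to reduce everything to the uniform operator bound \eqref{E:2.2} on the $A_l$ together with the pointwise Gaussian decay already exploited in the proof of Theorem \ref{T:1.4}(i). The first observation is that $A_0$ is simply the identity operator on $\LtwoT$: indeed $A_0(h)(t)=H(t+0)=H(t)=h(t)$ for almost every $t\in[-\pi,\pi]$, since $H$ is the extension of $h$. Consequently $\Alstar$ at $l=0$ is also the identity, so $\phi_0(t)=e^{-\alpha t^2}h(t)$. Because $e^{-\alpha t^2}\le 1$ for every real $t$, multiplying by this factor cannot increase the $\LtwoT$ norm, which yields the first inequality in \eqref{E:2.4} immediately.

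For the second inequality, I would fix $l\in\Z\setminus\{0\}$ and estimate $\|\phi_l\|_{\LtwoT}$ by peeling off the two bounded operators and the multiplier in turn. Since the adjoint of a bounded operator on a Hilbert space has the same norm, \eqref{E:2.2} gives $\|\Alstar\|=\|A_l\|\le B^2$, whence
$$ \|\phi_l\|_{\LtwoT}\le B^2\bigl\| e^{-\alpha(\cdot+\Lwoodone l)^2}A_l(h)\bigr\|_{\LtwoT}. $$
The key step is the pointwise bound on the multiplier: as $t$ ranges over $[-\pi,\pi]$, the quantity $t+2\pi l$ ranges over $[(2l-1)\pi,(2l+1)\pi]$, whose distance from the origin is at least $(2|l|-1)\pi$ whenever $l\neq0$. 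Hence $e^{-\alpha(t+2\pi l)^2}\le e^{-\alpha(2|l|-1)^2\pi^2}$ throughout $[-\pi,\pi]$, and applying \eqref{E:2.2} once more to bound $\|A_l(h)\|_{\LtwoT}\le B^2\|h\|_{\LtwoT}$ I obtain
$$ \|\phi_l\|_{\LtwoT}\le B^4 e^{-\pi^2\alpha(2|l|-1)^2}\|h\|_{\LtwoT}. $$

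Summing this estimate over $l\in\Z\setminus\{0\}$ and recognizing the resulting series as exactly the one bounded by Proposition \ref{P:1.0} with parameter $\pi^2\alpha$ gives $\sum_{l\neq0}\|\phi_l\|_{\LtwoT}\le B^4\kappa(\pi^2\alpha)\|h\|_{\LtwoT}$, which is the second claim. Finally, since $\sum_{l\in\Z}\|\phi_l\|_{\LtwoT}<\infty$, the series $\sum_{l\in\Z}\phi_l$ is absolutely convergent, hence convergent, in the Banach space $\LtwoT$. I do not expect a genuine obstacle here: the content is entirely a matter of tracking the constant $B$ correctly through the two operator applications and recovering precisely the geometric decay in $(2|l|-1)^2$ that Proposition \ref{P:1.0} is built to sum. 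The only point warranting a moment's care is the identification $A_0=I$, which is what makes the $l=0$ term cleanly dominated by $\|h\|_{\LtwoT}$ without an extra factor of $B$.
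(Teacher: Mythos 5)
Your proof is correct and follows essentially the same route as the paper's: the $l=0$ term is handled by noting the multiplier $e^{-\alpha t^2}$ has modulus at most $1$ (the paper also tacitly uses $A_0=I$, which you make explicit), and for $l\neq0$ the paper applies $\|A_l^*\|,\|A_l\|\le B^2$ from \eqref{E:2.2} together with the pointwise bound $e^{-\alpha(t+2\pi l)^2}\le e^{-\pi^2\alpha(2|l|-1)^2}$ on $[-\pi,\pi]$, then sums via Proposition \ref{P:1.0} exactly as you do. No gaps.
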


\begin{proof}
We note that 
\begin{equation}\label{E:2.6}
\left\|\phi_0\right\|_{\LtwoT}=\left\|e^{-\alpha(\cdot)^2}h\right\|_{\LtwoT}
\le \|h\|_{\LtwoT},
\end{equation}
whereas
\eqref{E:2.2} gives rise to the following estimates
for every $l\in\Z\setminus\{0\}$:
\begin{align}\label{E:2.7}
\left\|\phi_l\right\|_{\LtwoT}&\le
B^2\left\|
e^{-\alpha(\cdot+\Lwoodone l)^2}A_l(h)\right\|_{\LtwoT}\\
\notag &\le
B^2e^{-\pi^2 \alpha(2|l|-1)^2}\left\|A_lh\right\|_{\LtwoT}\\
\notag &\le
B^4e^{-\pi^2 \alpha(2|l|-1)^2}\|h\|_{\LtwoT}.
\end{align}
This completes the proof.
\end{proof}

We are now ready for the first main result of this
section.

\begin{theorem}\label{T:2.2}
Let $\lam>0$ be fixed, and let $f\in\PW$.  
Let $\psi_\lam$ denote the restriction,
to the interval $[-\pi,\pi]$, of the 
function $\Psi_\lam$ given in 
part (iii) of Theorem \ref{T:1.6}.
Then
\begin{equation}\label{E:2.9}
{\mathcal F}[f]={\mathcal F}[\Il(f)]+
\sqrt{\frac{\pi}{\lam}}\summ
\Alstar\left(
e^{-(\cdot+\Lwoodone l)^2/(4\lam)}A_l\left(\psi_\lam\right)
\right) \text{ on $[-\pi,\pi]$.}
\end{equation}

\end{theorem}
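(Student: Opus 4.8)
The plan is to apply Lemma~\ref{L:2.0}. Write $\Phi$ for the right-hand side of \eqref{E:2.9}. Specializing Lemma~\ref{L:2.1} to $\alpha=1/(4\lam)$ and $h=\psi_\lam$, the functions $\phi_l:=\Alstar\!\left(e^{-(\cdot+\Lwoodone l)^2/(4\lam)}A_l(\psi_\lam)\right)$ satisfy $\summ\|\phi_l\|_{\LtwoT}<\infty$, so $\Phi\in\LtwoT$ and the series defining it converges there. By Lemma~\ref{L:2.0} it then suffices to verify the sampling conditions \eqref{E:2.3}, equivalently $\langle\Phi,e_k\rangle=\Lwoodone f(x_k)$ for every $k\in\Z$; this, via \eqref{E:1.10}--\eqref{E:1.11}, forces $\Phi=\mathcal F[f]$ on $[-\pi,\pi]$.

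First I would recast the interpolation property \eqref{E:0.9} analytically. Since $\Il(f)(x_k)=f(x_k)$ by Theorem~\ref{T:1.6}(i), and $\mathcal F[\Il(f)]\in\LtwoR\cap\LoneR$ by Theorem~\ref{T:1.6}(iii), part~(iv) of Theorem~\ref{T:1.6} gives $\Lwoodone f(x_k)=\Lwoodone\Il(f)(x_k)=\int_{-\infty}^{\infty}\mathcal F[\Il(f)](u)e^{ix_ku}\,du$. Because the integrand lies in $\LoneR$, I may partition $\R$ into the blocks $[(2l-1)\pi,(2l+1)\pi]$ and substitute $u=t+\Lwoodone l$, obtaining $\Lwoodone f(x_k)=\sum_{l\in\Z}e^{2\pi ilx_k}\int_{-\pi}^{\pi}\mathcal F[\Il(f)](t+\Lwoodone l)e^{ix_kt}\,dt$.

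Next I would identify this block by block with $\langle\Phi,e_k\rangle$. Using Theorem~\ref{T:1.6}(iii) together with $A_l(\psi_\lam)(t)=\Psi_\lam(t+\Lwoodone l)$, one has on $[-\pi,\pi]$ the identity $\sqrt{\pi/\lam}\,v_l=\mathcal F[\Il(f)](\cdot+\Lwoodone l)$, where $v_l:=e^{-(\cdot+\Lwoodone l)^2/(4\lam)}A_l(\psi_\lam)$ is the function inside $\Alstar$ in $\phi_l$. The adjoint is handled by the elementary computation $A_l(e_k)(t)=e^{-ix_k(t+\Lwoodone l)}=e^{-2\pi ilx_k}e_k(t)$, which gives $\langle\Alstar(v_l),e_k\rangle=\langle v_l,A_l(e_k)\rangle=e^{2\pi ilx_k}\langle v_l,e_k\rangle$. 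Substituting the previous identity yields $\sqrt{\pi/\lam}\,\langle\phi_l,e_k\rangle=e^{2\pi ilx_k}\int_{-\pi}^{\pi}\mathcal F[\Il(f)](t+\Lwoodone l)e^{ix_kt}\,dt$, which is precisely the $l$-th block above; the stand-alone term $\mathcal F[\Il(f)]$ accounts for the $l=0$ block, since $A_0$ is the identity. Summing over $l\in\Z$ and commuting $\langle\cdot,e_k\rangle$ with the convergent series (licensed by Lemma~\ref{L:2.1}) then gives $\langle\Phi,e_k\rangle=\Lwoodone f(x_k)$, which completes the argument.

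The main obstacle is not conceptual but one of justifying the two interchanges while tracking the modulation factors: partitioning $\int_\R$ into $\Lwoodone$-periodic blocks (licensed by $\mathcal F[\Il(f)]\in\LoneR$) and pulling $\langle\cdot,e_k\rangle$ through the infinite sum defining $\Phi$ (licensed by the $\LtwoT$-convergence of Lemma~\ref{L:2.1}), keeping the factors $e^{\pm2\pi ilx_k}$ -- produced by the translation by $\Lwoodone l$ in $A_l$ and in its adjoint -- correctly aligned. Once these are in hand, the term-by-term matching of the integral blocks with the adjoint terms $\Alstar(v_l)$ is immediate.
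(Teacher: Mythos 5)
Your proposal is correct and follows essentially the same route as the paper's proof: reduce to the sampling conditions via Lemma \ref{L:2.0}, express $2\pi f(x_k)=2\pi\Il(f)(x_k)$ as the full-line integral of ${\mathcal F}[\Il(f)]$ against $e^{ix_ku}$, partition into $2\pi$-blocks, identify each block with $\langle\Alstar(v_l),e_k\rangle$ (the paper does this by changing variables and recognizing $e^{ix_k(t+2\pi l)}=\overline{A_l(e_k)(t)}$, which is the same modulation bookkeeping you carry out explicitly), and interchange sum and inner product via Lemma \ref{L:2.1}. No gaps.
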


\begin{proof}
Let $\phi$ denote the function on the right side
of \eqref{E:2.9}.  In view of Lemma \ref{L:2.0},
it suffices to show that 
\begin{equation}\label{E:2.10}
\Lwoodone f(x_k)=\langle\phi,e_k\rangle,\qquad k\in\Z.
\end{equation}
Now Theorem \ref{T:1.6} implies the relations
\begin{equation}\label{E:2.11}
\Lwoodone f(x_k)=\Lwoodone\Il(f)(x_k)=
\sqrt{\frac{\pi}{\lam}}
\int_{-\infty}^{\infty}e^{-u^2/(4\lam)}\Psi_\lam(u)e^{ix_ku}\,du,
\qquad k\in\Z,
\end{equation}
whilst
\begin{align}\label{E:2.12}
\int_{-\infty}^{\infty\!\!}e^{-u^2/(4\lam)}\Psi_\lam(u)e^{ix_ku}du
&=\sum_{l\in\Z}\!\!\int_{(2l-1)\pi}^{(2l+1)\pi}\!\!e^{-u^2/(4\lam)}\Psi_\lam(u)e^{ix_ku}du\\
\notag &=\sum_{l\in\Z}\!\!\int_{-\pi}^\pi \!\!
e^{-(t+\Lwoodone l)^2/(4\lam)}\Psi_\lam(t+2\pi l)e^{ix_k(t+2\pi l)}dt\\
\notag &=\sum_{l\in\Z}\!\!\int_{-\pi}^\pi 
\!\!e^{-(t+\Lwoodone l)^2/(4\lam)}A_l(\psi_\lam)(t)
{\overline{A_l(e_k)(t)}}dt\\
\notag &=\sum_{l\in\Z}\left\langle
e^{-(\cdot+\Lwoodone l)^2/(4\lam)}A_l(\psi_{\lam}),
A_l(e_k)
\right\rangle \\
\notag &=\sum_{l\in\Z}\left\langle
\Alstar\left(e^{-(\cdot+\Lwoodone l)^2/(4\lam)}
A_l(\psi_{\lam})\right),e_k
\right\rangle \\
\notag &=\left\langle
\sum_{l\in\Z}\Alstar\left(e^{-(\cdot+\Lwoodone l)^2/(4\lam)}
A_l(\psi_{\lam})\right),e_k
\right\rangle, 
\end{align}
the final step being justified by 
Lemma \ref{L:2.1}.  Noting that 
\begin{equation*}
\sum_{l\in\Z}\Alstar\left(e^{-(\cdot+\Lwoodone l)^2/(4\lam)}
A_l(\psi_{\lam})\right)=
e^{-(\cdot)^2/(4\lam)}\psi_\lam +\summ
\Alstar\left(e^{-(\cdot+\Lwoodone l)^2/(4\lam)}
A_l(\psi_{\lam})\right),
\end{equation*}
we find that \eqref{E:2.12}, \eqref{E:2.11}, and 
part (iii) of Theorem \ref{T:1.6} yield
\eqref{E:2.10}, and with it the proof.
\end{proof}

Combining \eqref{E:2.9}
and \eqref{E:2.4} leads directly to the following:

\begin{corollary}\label{C:2.3}
Suppose that $\kappa$, $\lam$, $f$, $\psi_\lam$, 
and $B$ are as before.
Then
\begin{equation*}
\left\|{\mathcal F}[\Il(f)]\right\|_{\LtwoT}\le
\left\|{\mathcal F}[f]\right\|_{\LtwoT}+
\sqrt{\frac{\pi}{\lam}}\,B^4\kappa(\pi^2/(4\lam))
\left\|\psi_\lam\right\|_{\LtwoT}.
\end{equation*}
\end{corollary}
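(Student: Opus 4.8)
The plan is to read the bound off directly from the representation established in Theorem~\ref{T:2.2}, using the decay estimate of Lemma~\ref{L:2.1}. First I would isolate ${\mathcal F}[\Il(f)]$ in \eqref{E:2.9}, which gives, on $[-\pi,\pi]$,
\[
{\mathcal F}[\Il(f)] = {\mathcal F}[f] - \sqrt{\frac{\pi}{\lam}}\summ \Alstar\!\left(e^{-(\cdot+\Lwoodone l)^2/(4\lam)}A_l(\psi_\lam)\right).
\]
Taking $\LtwoT$-norms and applying the triangle inequality then yields
\[
\left\|{\mathcal F}[\Il(f)]\right\|_{\LtwoT} \le \left\|{\mathcal F}[f]\right\|_{\LtwoT} + \sqrt{\frac{\pi}{\lam}}\left\|\summ \Alstar\!\left(e^{-(\cdot+\Lwoodone l)^2/(4\lam)}A_l(\psi_\lam)\right)\right\|_{\LtwoT}.
\]

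Second, I would estimate the remaining norm. Since Lemma~\ref{L:2.1} guarantees that the relevant series converges in $\LtwoT$, I can pass the norm inside the sum by the countable triangle inequality and then invoke the second estimate in \eqref{E:2.4}. The essential bookkeeping step is to apply Lemma~\ref{L:2.1} with the choices $h=\psi_\lam$ and $\alpha = 1/(4\lam)$, so that each summand here is precisely the $\phi_l$ of that lemma and $\pi^2\alpha = \pi^2/(4\lam)$. This produces
\[
\left\|\summ \Alstar\!\left(e^{-(\cdot+\Lwoodone l)^2/(4\lam)}A_l(\psi_\lam)\right)\right\|_{\LtwoT} \le \summ \left\|\Alstar\!\left(e^{-(\cdot+\Lwoodone l)^2/(4\lam)}A_l(\psi_\lam)\right)\right\|_{\LtwoT} \le B^4\kappa(\pi^2/(4\lam))\left\|\psi_\lam\right\|_{\LtwoT}.
\]
Combining the two displays gives exactly the asserted inequality.

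I expect no genuine obstacle here: the statement is a corollary precisely because the heavy lifting---the Fourier-series representation of the interpolant's transform and the geometric decay of the correction terms---was already done in Theorem~\ref{T:2.2} and Lemma~\ref{L:2.1}. The only points demanding minor care are matching the parameter $\alpha$ to $1/(4\lam)$ so that the argument of $\kappa$ comes out as $\pi^2/(4\lam)$, and justifying the interchange of norm and infinite sum, which is licensed by the $\LtwoT$-convergence of the series asserted in Lemma~\ref{L:2.1}.
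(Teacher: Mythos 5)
Your argument is correct and is exactly what the paper intends: the corollary is stated as following from combining \eqref{E:2.9} with the second estimate in \eqref{E:2.4}, applied with $h=\psi_\lam$ and $\alpha=1/(4\lam)$, precisely as you do. The parameter matching and the justification for passing the norm inside the sum are both handled properly.
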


The preceding corollary shows that,
if $f$ is bandlimited, then 
the energy of the Fourier transform
of (its Gaussian interpolant) $\Il(f)$ -- 
on the interval
$[-\pi,\pi]$ -- is controlled
by that of the Fourier transform
of $f$ on that interval, plus
another term which involves the energy
of $\psi_\lam$ on the interval.  Our next
task is to show that this second term
can also be bounded effectively
via the energy of $\cF[f]$ on 
$[-\pi,\pi]$.  Before proceeding with
this, however, we pause to 
consider formally,
an operator which
has already made its debut, albeit indirectly, 
in Theorem \ref{T:2.2}.
This operator
will also play a role later in this section,
and a larger one in the next.
 
Given a positive number $\alpha$, we define the 
operator $\Top{\alpha}$ on $\LtwoT$ as follows:
\begin{equation}\label{E:2.13}
\Top{\alpha}(h):=e^{\pi^2\alpha}\summ\Alstar\left(
e^{-\alpha(\cdot+2\pi l)^2}A_l(h)
\right), \qquad h\in\LtwoT.
\end{equation}
That this operator is well defined is guaranteed by
Lemma \ref{L:2.1}, whilst 
its linearity is plain. The following 
properties of $\Top{\alpha}$
are easy to verify.

\begin{proposition}\label{P:2.4}
The operator $\Top{\alpha}$ is self adjoint, positive,
and its norm is no larger than 
$e^{\pi^2\alpha}B^4\kappa(\pi^2 \alpha)$, where
$\kappa$ is the function defined through
Proposition \ref{P:1.0}, and $B$ is the familiar
constant associated to the given Riesz-basis sequence $(x_j)$.
\end{proposition}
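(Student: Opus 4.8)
The plan is to verify the three asserted properties of $\Top{\alpha}$ directly from its defining series \eqref{E:2.13}, treating the norm bound as essentially a restatement of Lemma \ref{L:2.1} and the self-adjointness and positivity as consequences of the structure $\Alstar(\cdots A_l)$ appearing term by term. First I would dispose of the norm estimate: from \eqref{E:2.13} the operator is $e^{\pi^2\alpha}$ times the map $h\mapsto\summ\Alstar(e^{-\alpha(\cdot+2\pi l)^2}A_l(h))$, and Lemma \ref{L:2.1} (applied with this $\alpha$, so that the $\phi_l$ there are exactly the summands here) gives $\summ\|\phi_l\|_{\LtwoT}\le B^4\kappa(\pi^2\alpha)\|h\|_{\LtwoT}$, since $\Alstar$ has the same norm as $A_l$, namely at most $B^2$ by \eqref{E:2.2}. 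The triangle inequality then yields $\|\Top{\alpha}h\|_{\LtwoT}\le e^{\pi^2\alpha}B^4\kappa(\pi^2\alpha)\|h\|_{\LtwoT}$, which is the claimed bound.

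For self-adjointness, the key observation is that each summand $h\mapsto\Alstar(e^{-\alpha(\cdot+2\pi l)^2}A_l(h))$ is self-adjoint: writing $M_l$ for the (bounded, real-valued) multiplication operator $g\mapsto e^{-\alpha(\cdot+2\pi l)^2}g$ on $\LtwoT$, the summand is $A_l^*M_lA_l$, and since $M_l$ is self-adjoint (real multiplier) we have $(A_l^*M_lA_l)^*=A_l^*M_l^*A_l^{**}=A_l^*M_lA_l$. Thus I would establish that the $l$-th term is self-adjoint for every $l$, and then argue that the self-adjointness passes to the sum. The latter requires justifying that adjoint commutes with the $\LtwoT$-convergent series; because the partial sums converge in operator norm (by the summability $\summ\|A_l^*M_lA_l\|<\infty$ furnished by Lemma \ref{L:2.1}) and the adjoint map is norm-continuous, the limit $\Top{\alpha}$ inherits self-adjointness. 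Positivity follows from the same decomposition: for any $h\in\LtwoT$,
\[
\langle A_l^*M_lA_l\,h,h\rangle=\langle M_l(A_lh),A_lh\rangle=\int_{-\pi}^{\pi}e^{-\alpha(t+2\pi l)^2}\,|A_lh(t)|^2\,dt\ge0,
\]
so each term is a positive operator, and the norm-convergent sum of positive operators is positive; scaling by $e^{\pi^2\alpha}>0$ preserves this.

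The main obstacle I anticipate is purely the bookkeeping of interchanging limits with the adjoint and quadratic-form operations, rather than any genuine analytic difficulty: one must be careful that the series in \eqref{E:2.13} converges in the operator norm (not merely strongly) so that self-adjointness and positivity of the partial sums transfer to the limit. This is exactly what Lemma \ref{L:2.1} supplies, since it bounds $\summ\|\phi_l\|_{\LtwoT}$ uniformly over $\|h\|_{\LtwoT}\le1$, giving absolute summability of the operator norms of the summands. Once that norm-convergence is in hand, each of the three claims reduces to the corresponding elementary fact for a single term $A_l^*M_lA_l$, and the proof is complete.
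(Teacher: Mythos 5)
Your proof is correct; the paper itself offers no argument here, merely asserting that these properties ``are easy to verify,'' and your verification --- writing each summand as $A_l^{*}M_lA_l$ with $M_l$ a nonnegative real multiplier, invoking Lemma \ref{L:2.1} for absolute (operator-norm) summability, and passing self-adjointness and positivity to the norm limit --- is exactly the intended routine check. Nothing is missing.
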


We now return to the task of carrying 
forward the estimate in Corollary \ref{C:2.3}.
The first order
of business is to attend to 
$\left\|\psi_\lam\right\|_{\LtwoT}$:

\begin{proposition}\label{P:2.5}
The following holds:
\begin{equation*}
\left\|\psi_\lam\right\|_{\LtwoT}\le
\sqrt{\frac{\lam}{\pi}}\;e^{\pi^2/(4\lam)}\left\|
\FT{f}\right\|_{\LtwoT}.
\end{equation*}
\end{proposition}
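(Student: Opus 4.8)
The plan is to combine the representation of $\cF[f]$ furnished by Theorem \ref{T:2.2} with the definition of the operator $\Top{1/(4\lam)}$ and its positivity (Proposition \ref{P:2.4}), and then to extract the desired inequality by pairing the resulting identity against $\psi_\lam$ and applying Cauchy--Schwarz. The guiding idea is that $\cF[f]$ equals $\sqrt{\pi/\lam}$ times a \emph{positive} operator applied to $\psi_\lam$, and that this operator is bounded below; inverting that lower bound is exactly what produces the stated estimate.

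First I would rewrite the identity \eqref{E:2.9} on $[-\pi,\pi]$. Since $\psi_\lam$ is by definition the restriction of $\Psi_\lam$ to $[-\pi,\pi]$, part (iii) of Theorem \ref{T:1.6} gives $\cF[\Il(f)]=\sqrt{\pi/\lam}\,e^{-(\cdot)^2/(4\lam)}\psi_\lam$ there. Substituting this into \eqref{E:2.9}, and recognising the series $\summ\Alstar\!\big(e^{-(\cdot+\Lwoodone l)^2/(4\lam)}A_l(\psi_\lam)\big)$ as $e^{-\pi^2/(4\lam)}\Top{1/(4\lam)}(\psi_\lam)$ (take $\alpha=1/(4\lam)$ in \eqref{E:2.13}), I obtain on $[-\pi,\pi]$
\[
\cF[f]=\sqrt{\frac{\pi}{\lam}}\Big(e^{-(\cdot)^2/(4\lam)}\psi_\lam
+e^{-\pi^2/(4\lam)}\Top{1/(4\lam)}(\psi_\lam)\Big).
\]

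Next I would pair both sides with $\psi_\lam$ in the inner product \eqref{E:1.9}. The first summand contributes $\int_{-\pi}^\pi e^{-t^2/(4\lam)}|\psi_\lam(t)|^2\,dt$, which is bounded below by $e^{-\pi^2/(4\lam)}\|\psi_\lam\|_{\LtwoT}^2$ because $e^{-t^2/(4\lam)}\ge e^{-\pi^2/(4\lam)}$ whenever $|t|\le\pi$. The second summand contributes $e^{-\pi^2/(4\lam)}\langle\Top{1/(4\lam)}(\psi_\lam),\psi_\lam\rangle$, which is real and nonnegative since $\Top{1/(4\lam)}$ is self adjoint and positive by Proposition \ref{P:2.4}. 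Consequently $\langle\cF[f],\psi_\lam\rangle$ is a nonnegative real number satisfying
\[
\langle\cF[f],\psi_\lam\rangle\ge\sqrt{\frac{\pi}{\lam}}\,e^{-\pi^2/(4\lam)}\|\psi_\lam\|_{\LtwoT}^2.
\]

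Finally, the Cauchy--Schwarz inequality bounds the left-hand side above by $\|\cF[f]\|_{\LtwoT}\|\psi_\lam\|_{\LtwoT}$; dividing through by $\|\psi_\lam\|_{\LtwoT}$ (the case $\psi_\lam=0$ being trivial) and rearranging yields precisely the claimed bound. The one point requiring care is the lower bound on the operator $e^{-(\cdot)^2/(4\lam)}+e^{-\pi^2/(4\lam)}\Top{1/(4\lam)}$: the decisive features are that $\Top{1/(4\lam)}$ is positive, so its contribution may simply be discarded, and that the Gaussian multiplier admits the uniform pointwise lower bound $e^{-\pi^2/(4\lam)}$ on $[-\pi,\pi]$. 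Everything else is routine bookkeeping, so I expect no real obstacle beyond correctly matching $\alpha=1/(4\lam)$ when identifying the series with $\Top{1/(4\lam)}$.
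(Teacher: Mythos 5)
Your proposal is correct and follows essentially the same route as the paper: rewrite \eqref{E:2.9} as $\FT{f}=\FT{\Il(f)}+e^{-\pi^2/(4\lam)}\sqrt{\pi/\lam}\,\Top{1/(4\lam)}(\psi_\lam)$, pair with $\psi_\lam$, discard the nonnegative $\Top{1/(4\lam)}$ term, bound the Gaussian weight below by $e^{-\pi^2/(4\lam)}$ on $[-\pi,\pi]$, and finish with Cauchy--Schwarz. No gaps.
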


\begin{proof}
Equation \eqref{E:2.9} asserts that
\begin{equation}\label{E:2.14}
\FT{f}=\FT{\Il(f)}+e^{-\pi^2/(4\lam)}\sqrt{\frac{\pi}{\lam}}
\,\Top{1/(4\lam)}(\psi_\lam).
\end{equation}
As 
\begin{equation}\label{E:2.15}
\left\langle\FT{\Il(f)},\psi_\lam\right\rangle=
\sqrt{\frac{\pi}{\lam}}\lint{-\pi}{\pi}e^{-u^2/(4\lam)}\left|
\psi_\lam(u)\right|^2\,du\ge0,
\end{equation}
and $\Top{1/(4\lam)}$ is a positive operator,
we find from \eqref{E:2.14} that 
$\langle\FT{f},\psi_\lam\rangle$ is 
nonnegative, and also that 
$\langle\FT{f},\psi_\lam\rangle\ge\langle\FT{\Il(f)},\psi_\lam\rangle$.
Hence the Bunyakovskii--Cauchy--Schwarz inequality
and \eqref{E:2.15} lead to the relations
\begin{align*}
\left\|\FT{f}\right\|_{\LtwoT}\lnorm\psi_\lam\rnorm_{\LtwoT}&\ge
\sqrt{\frac{\pi}{\lam}}\lint{-\pi}{\pi}e^{-u^2/(4\lam)}\left|
\psi_\lam(u)\right|^2\,du\\
&\ge
\sqrt{\frac{\pi}{\lam}}e^{-\pi^2/(4\lam)}
\lnorm\psi_\lam\rnorm_{\LtwoT}^2,
\end{align*}
and the required result follows directly.
\end{proof}

The upcoming corollary is obtained via a combination of
Corollary \ref{C:2.3}, Proposition \ref{P:2.5}, Equation \eqref{E:0.1}
and the  fact that 
$e^{\pi^2/4\lam}\kappa(\pi^2/(4\lam))=
(1-e^{-\pi^2/4\lam})^{-1}\le 2$, whenever $\lam\le 1$.

\begin{corollary}\label{C:2.6}
Assume that $0<\lambda\le1$, and let 
$(x_j)$ and $f$ be as above.  The following holds:
\begin{equation*}
\lnorm\FT{\Il(f)}\rnorm_{\LtwoT} \le \sqrt{2\pi}\big[1+ 2B^4\big]\|f\|_{L_2(\R)}.
\end{equation*}
\end{corollary}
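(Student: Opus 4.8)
The plan is to chain the two estimates already assembled in this section so that the awkward $\lam$-dependent prefactors annihilate one another, leaving a bound in which the only $\lam$-dependence resides in the product $e^{\pi^2/(4\lam)}\kappa(\pi^2/(4\lam))$, which has been arranged to be uniformly controlled for $\lam\le 1$. I would start from the inequality of Corollary \ref{C:2.3},
\[
\lnorm\FT{\Il(f)}\rnorm_{\LtwoT}\le \lnorm\FT{f}\rnorm_{\LtwoT}+\sqrt{\frac{\pi}{\lam}}\,B^4\kappa(\pi^2/(4\lam))\lnorm\psi_\lam\rnorm_{\LtwoT},
\]
and regard the whole task as that of eliminating the factor $\lnorm\psi_\lam\rnorm_{\LtwoT}$ in favour of $\lnorm\FT{f}\rnorm_{\LtwoT}$.

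The key step is to substitute the bound furnished by Proposition \ref{P:2.5}, namely $\lnorm\psi_\lam\rnorm_{\LtwoT}\le\sqrt{\lam/\pi}\;e^{\pi^2/(4\lam)}\lnorm\FT{f}\rnorm_{\LtwoT}$, into the display above. The one delicate point is that the prefactor $\sqrt{\pi/\lam}$ coming from Corollary \ref{C:2.3} cancels exactly against the $\sqrt{\lam/\pi}$ produced by Proposition \ref{P:2.5}; after this cancellation the surviving $\lam$-dependent quantity is precisely $e^{\pi^2/(4\lam)}\kappa(\pi^2/(4\lam))$, and I would invoke the uniform estimate $e^{\pi^2/(4\lam)}\kappa(\pi^2/(4\lam))\le 2$ recorded immediately above (valid for $0<\lam\le 1$) to obtain
\[
\lnorm\FT{\Il(f)}\rnorm_{\LtwoT}\le\lnorm\FT{f}\rnorm_{\LtwoT}+2B^4\lnorm\FT{f}\rnorm_{\LtwoT}=\big[1+2B^4\big]\lnorm\FT{f}\rnorm_{\LtwoT}.
\]
It is here, in the simultaneous collapse of the algebraic and exponential factors, that the uniform (i.e.\ $\lam$-free) character of the final constant is purchased; everything else is bookkeeping.

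Finally I would pass from the frequency-side norm $\lnorm\FT{f}\rnorm_{\LtwoT}$ to the stated time-side norm $\|f\|_{\LtwoR}$. Since $f\in\PW$, its transform $\FT{f}$ vanishes almost everywhere outside $[-\pi,\pi]$, so $\lnorm\FT{f}\rnorm_{\LtwoT}=\lnorm\FT{f}\rnorm_{\LtwoR}$, whereupon the Plancherel identity in \eqref{E:0.1} gives $\lnorm\FT{f}\rnorm_{\LtwoR}=\sqrt{2\pi}\,\|f\|_{\LtwoR}$. Feeding this into the previous display yields exactly $\lnorm\FT{\Il(f)}\rnorm_{\LtwoT}\le\sqrt{2\pi}\,[1+2B^4]\|f\|_{\LtwoR}$, as required. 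I do not anticipate any genuine obstacle: the argument is a purely mechanical composition of Corollary \ref{C:2.3}, Proposition \ref{P:2.5}, and \eqref{E:0.1}, and the sole thing one must watch is the exact matching of the $\sqrt{\pi/\lam}$ and $\sqrt{\lam/\pi}$ factors, without which the constant would fail to be independent of $\lam$.
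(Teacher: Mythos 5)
Your argument is exactly the paper's proof: the corollary is obtained there precisely by substituting Proposition \ref{P:2.5} into Corollary \ref{C:2.3}, letting the $\sqrt{\pi/\lam}$ and $\sqrt{\lam/\pi}$ factors cancel, bounding $e^{\pi^2/(4\lam)}\kappa(\pi^2/(4\lam))$ uniformly for $\lam\le1$, and converting $\lnorm\FT{f}\rnorm_{\LtwoT}$ into $\sqrt{2\pi}\,\|f\|_{\LtwoR}$ via \eqref{E:0.1} and the support of $\FT{f}$. The only quibble --- shared with the paper's own remark --- is that the definition of $\kappa$ gives $e^{\alpha}\kappa(\alpha)=2(1-e^{-\alpha})^{-1}$, which at $\alpha=\pi^2/4$ slightly exceeds $2$, so the numerical constant should strictly be a hair larger than $1+2B^4$; this is harmless for everything that follows.
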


The preceding result accomplishes the first half
of what we set about to do in this section.
The second part will be dealt with next; our
deliberations will be
quite brief,
for the proof is now familiar terrain.

\begin{proposition}\label{P:2.7}
Let $0<\lam\le1$.  The following holds:
\begin{align*}
\lnorm\FT{\Il(f)}\rnorm_{L_2(\R\setminus[-\pi,\pi])}
\le B^2\sqrt{\frac\pi\lambda}\|\psi_\lam\|_{\LtwoT}\sqrt{
\kappa(\pi^2/(2\lam))}\le\sqrt{8\pi}  B^2\|f\|_{L_2(\R)}.
\end{align*}
\end{proposition}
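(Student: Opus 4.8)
The plan is to reprise, on the complement of $[-\pi,\pi]$, the energy computation already performed in Theorem \ref{T:1.4}(i). First I would invoke part (iii) of Theorem \ref{T:1.6} to write $\FT{\Il(f)}(u)=\sqrt{\pi/\lam}\,e^{-u^2/(4\lam)}\Psi_\lam(u)$, and then decompose $\R\setminus[-\pi,\pi]$ into the blocks $[(2l-1)\pi,(2l+1)\pi]$ with $l\in\Z\setminus\{0\}$, obtaining
\[
\lnorm\FT{\Il(f)}\rnorm^2_{L_2(\R\setminus[-\pi,\pi])}=\frac{\pi}{\lam}\summ\int_{(2l-1)\pi}^{(2l+1)\pi}e^{-u^2/(2\lam)}|\Psi_\lam(u)|^2\,du.
\]

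On each block I would apply the same pointwise Gaussian bound used in Theorem \ref{T:1.4}, namely $e^{-u^2/(2\lam)}\le e^{-(2|l|-1)^2\pi^2/(2\lam)}$ for $(2l-1)\pi\le u\le(2l+1)\pi$ with $l\neq0$, and then recognize the surviving integral as $\int_{(2l-1)\pi}^{(2l+1)\pi}|\Psi_\lam|^2=\lnorm A_l(\psi_\lam)\rnorm^2_{\LtwoT}$, which \eqref{E:2.1} majorizes by $B^4\lnorm\psi_\lam\rnorm^2_{\LtwoT}$. Summing the resulting series with Proposition \ref{P:1.0} (taking $\alpha=\pi^2/(2\lam)$) yields
\[
\lnorm\FT{\Il(f)}\rnorm^2_{L_2(\R\setminus[-\pi,\pi])}\le\frac{\pi}{\lam}\,B^4\lnorm\psi_\lam\rnorm^2_{\LtwoT}\,\kappa(\pi^2/(2\lam)),
\]
and extracting the square root (so that $B^4$ becomes $B^2$) gives the first claimed inequality.

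For the second inequality I would substitute the bound on $\lnorm\psi_\lam\rnorm_{\LtwoT}$ supplied by Proposition \ref{P:2.5}; the factors $\sqrt{\pi/\lam}$ and $\sqrt{\lam/\pi}$ cancel, leaving $B^2e^{\pi^2/(4\lam)}\sqrt{\kappa(\pi^2/(2\lam))}\,\lnorm\FT{f}\rnorm_{\LtwoT}$. The constant is then pinned down by squaring the middle factor: $e^{\pi^2/(2\lam)}\kappa(\pi^2/(2\lam))=2\,(1-e^{-\pi^2/(2\lam)})^{-1}$, which for $\lam\le1$ is at most $4$ because $e^{-\pi^2/(2\lam)}\le e^{-\pi^2/2}<1/2$; hence the factor itself is at most $2$. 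Finally I would use \eqref{E:0.1} together with $\FT{f}=0$ off $[-\pi,\pi]$ to rewrite $\lnorm\FT{f}\rnorm_{\LtwoT}=\sqrt{2\pi}\,\lnorm f\rnorm_{\LtwoR}$; since $2\sqrt{2\pi}=\sqrt{8\pi}$, this produces exactly the stated bound.

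As the surrounding text already signals, there is no genuine obstacle here—the whole argument is the tail version of the energy estimate in Theorem \ref{T:1.4}, with the only work being the careful restriction of the summation to $l\neq0$. The single point that demands attention is the constant-chasing of the last step: one must verify that the square root turns $B^4$ into $B^2$, that the $\lam$-dependent prefactors cancel precisely, and that the crude inequality $e^{-\pi^2/2}<1/2$ suffices to collapse the combined $\kappa$-and-exponential factor to the clean value $2$, so that the final constant is exactly $\sqrt{8\pi}$.
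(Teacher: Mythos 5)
Your proposal is correct and follows essentially the same route as the paper: the same block decomposition of $\R\setminus[-\pi,\pi]$, the same pointwise Gaussian bound leading to $\lnorm A_l(\psi_\lam)\rnorm_{\LtwoT}^2\le B^4\lnorm\psi_\lam\rnorm_{\LtwoT}^2$ via \eqref{E:2.2}, and the same combination of Proposition \ref{P:2.5} with \eqref{E:0.1} and the elementary bound $(1-e^{-\pi^2/(2\lam)})^{-1}\le2$ for the second inequality. Your constant-chasing at the end is accurate and matches the paper's $\sqrt{8\pi}\,B^2$.
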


\begin{proof}
We begin by noting that
\begin{align}\label{E:2.16}
\lnorm\FT{\Il(f)}\rnorm_{L_2(\R\setminus[-\pi,\pi])}^2 &=
\frac{\pi}{\lam}\int_{\R\setminus[-\pi,\pi]}
e^{-u^2/(2\lam)}|\Psi_\lam(u)|^2\,du\\
\notag &=
\frac{\pi}{\lam}\summ\lint{(2l-1)\pi}{(2l+1)\pi}
e^{-u^2/(2\lam)}|\Psi_\lam(u)|^2\,du.
\end{align}
The last term in \eqref{E:2.16} may be bounded as follows:
\begin{align}\label{E:2.17}
\frac{\pi}{\lam}\summ\lint{-\pi}{\pi}e^{-(t+\Lwoodone l)^2/(2\lam)}
&\left|\left(
A_l(\psi_\lam)\right)(t)\right|^2\,dt\\
\notag &\le
\frac{\pi}{\lam}\summ e^{-(2|l|-1)^2\pi^2/(2\lam)}
\lnorm A_l(\psi_\lam)\rnorm_{\LtwoT}^2 \\
\notag &\le \frac{B^4\pi}{\lam}\lnorm\psi_\lam\rnorm_{\LtwoT}^2
\kappa(\pi^2/(2\lam)),
\end{align}
the last inequality being consequent upon \eqref{E:2.2}. This proves the first
 of the two stated inequalities.

The second inequality follows from the first,
by way of Proposition \ref{P:2.5}, \eqref{E:0.1}
and the fact that $(1-e^{-\pi^2/4\lam})^{-1}\le 2$, whenever $\lam\le 1$. 
\end{proof}
We close this section by summarizing the findings of
Corollary \ref{C:2.6} and Proposition \ref{P:2.7}:
\begin{theorem}\label{T:2.8}
Suppose that $(x_j:j\in\Z)$ is a fixed Riesz-basis sequence.
Then $\lbrace\Il:0<\lam\le1\rbrace$ is a uniformly-bounded
family of linear operators from $\PW$ to $\LtwoR$.
\end{theorem}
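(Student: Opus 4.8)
The plan is to obtain the uniform bound directly from the two prior estimates, which together already control the full $L_2(\R)$-norm of the Fourier transform of the Gaussian interpolant. Since the Fourier transform is (up to the factor $\sqrt{2\pi}$) an isometry on $L_2(\R)$ by the first equation in \eqref{E:0.1}, it suffices to bound $\|\FT{\Il(f)}\|_{L_2(\R)}$ by a constant multiple of $\|f\|_{L_2(\R)}$, where the constant depends only on the Riesz-basis sequence $(x_j)$ through $B$, and crucially not on $\lam$.

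First I would split the real line as $\R=[-\pi,\pi]\cup(\R\setminus[-\pi,\pi])$ and write
\begin{equation*}
\lnorm\FT{\Il(f)}\rnorm_{L_2(\R)}^2=
\lnorm\FT{\Il(f)}\rnorm_{\LtwoT}^2+
\lnorm\FT{\Il(f)}\rnorm_{L_2(\R\setminus[-\pi,\pi])}^2.
\end{equation*}
Corollary \ref{C:2.6} bounds the first summand by $2\pi\big[1+2B^4\big]^2\|f\|_{L_2(\R)}^2$, while Proposition \ref{P:2.7} bounds the second by $8\pi B^4\|f\|_{L_2(\R)}^2$, both uniformly for $0<\lam\le1$. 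Adding these and taking square roots gives $\lnorm\FT{\Il(f)}\rnorm_{L_2(\R)}\le M\|f\|_{L_2(\R)}$ for a constant $M$ depending only on $B$. Then \eqref{E:0.1} converts this into the desired bound $\|\Il(f)\|_{L_2(\R)}\le (M/\sqrt{2\pi})\|f\|_{L_2(\R)}$, with the constant independent of $\lam$. Since $\Il$ is linear by part (v) of Theorem \ref{T:1.6}, and $f\in\PW$ was arbitrary, this establishes that the family $\{\Il:0<\lam\le1\}$ is uniformly bounded.

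The genuinely hard work has already been carried out in the preparatory results, so the final theorem is essentially a bookkeeping step: the main obstacle was ensuring that the $\lam$-dependent prefactors of the form $\sqrt{\pi/\lam}\,e^{\pi^2/(4\lam)}$ appearing in $\|\psi_\lam\|_{\LtwoT}$ (Proposition \ref{P:2.5}) cancel against the decaying factors $\kappa(\pi^2/(4\lam))$ and $\kappa(\pi^2/(2\lam))$, and this cancellation was precisely secured by the elementary inequality $(1-e^{-\pi^2/(4\lam)})^{-1}\le2$ used in the corollaries leading to \eqref{E:2.6} and \eqref{E:2.7}. Consequently the proof of Theorem \ref{T:2.8} need only cite Corollary \ref{C:2.6} and Proposition \ref{P:2.7}, combine them via the Pythagorean identity above, and invoke the Plancherel relation in \eqref{E:0.1}; no further estimation is required.
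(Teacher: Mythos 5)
Your proposal is correct and follows essentially the same route as the paper: Theorem \ref{T:2.8} is stated there explicitly as a summary of Corollary \ref{C:2.6} and Proposition \ref{P:2.7}, combined exactly as you do via the orthogonal splitting of $\lnorm\FT{\Il(f)}\rnorm_{L_2(\R)}^2$ over $[-\pi,\pi]$ and its complement together with the Plancherel relation \eqref{E:0.1}. The only quibble is cosmetic: the inequality $(1-e^{-\pi^2/(4\lam)})^{-1}\le 2$ is invoked in Corollary \ref{C:2.6} and Proposition \ref{P:2.7} themselves, not in \eqref{E:2.6} and \eqref{E:2.7}, which are the estimates inside the proof of Lemma \ref{L:2.1}.
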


\section{Convergence of $\Il$}\label{S:3}

This section is devoted to the proof of the convergence
result stated in the introduction (Theorem \ref{T:0.1}).
We begin by laying some requisite groundwork.
Let $\alpha$
be a fixed positive number.
Recall the linear operator $\Top{\alpha}$ 
from \eqref{E:2.13}: 
\begin{equation*}
\Top{\alpha}(h):=e^{\pi^2\alpha}\summ\Alstar\left(
e^{-\alpha(\cdot+2\pi l)^2}A_l(h)
\right), \qquad h\in\LtwoT.
\end{equation*}
We now define the following (multiplier) operator
on $\LtwoT$:
\begin{equation}\label{E:3.0}
\Mop{\alpha}(h):=e^{-\alpha(\pi^2-(\cdot)^2)}h, \qquad h\in\LtwoT.
\end{equation}

The following properties of $\Mop{\alpha}$ are easy
to verify.

\begin{proposition}\label{P:3.0}
The operator $\Mop{\alpha}$ is a bounded
linear operator on $\LtwoT$, whose norm
does not exceed $1$.  Moreover, it is
self adjoint, 
strictly positive, and invertible.
\end{proposition}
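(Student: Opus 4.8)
The plan is to recognize $\Mop{\alpha}$ as nothing more than a multiplication operator on $\LtwoT$: by \eqref{E:3.0} it sends $h$ to $m_\alpha\cdot h$, where $m_\alpha(t):=e^{-\alpha(\pi^2-t^2)}$. The entire proposition will then follow from the single observation that $m_\alpha$ is a real-valued, continuous (hence measurable) function obeying the two-sided bound
\[
0<e^{-\alpha\pi^2}\le m_\alpha(t)\le 1,\qquad t\in[-\pi,\pi],
\]
the upper bound because $t^2\le\pi^2$ forces $\pi^2-t^2\ge 0$, and the lower bound because $\pi^2-t^2\le\pi^2$. I want to stress at the outset that both inequalities rely on restricting to $[-\pi,\pi]$; over all of $\R$ this multiplier would be unbounded, and the conclusions below would fail.

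First I would dispatch boundedness and the norm estimate: since $|m_\alpha(t)|\le 1$ pointwise,
\[
\lnorm\Mop{\alpha}(h)\rnorm_{\LtwoT}^2=\lint{-\pi}{\pi}|m_\alpha(t)|^2|h(t)|^2\,dt\le\lint{-\pi}{\pi}|h(t)|^2\,dt=\lnorm h\rnorm_{\LtwoT}^2,
\]
so $\Mop{\alpha}$ is bounded with norm at most $1$. For self-adjointness I would only need that $m_\alpha$ is real: for $h_1,h_2\in\LtwoT$, using the inner product of \eqref{E:1.9},
\[
\langle\Mop{\alpha}(h_1),h_2\rangle=\lint{-\pi}{\pi}m_\alpha h_1\overline{h_2}=\lint{-\pi}{\pi}h_1\overline{m_\alpha h_2}=\langle h_1,\Mop{\alpha}(h_2)\rangle.
\]

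Strict positivity and invertibility both issue from the strictly positive lower bound. For positivity,
\[
\langle\Mop{\alpha}(h),h\rangle=\lint{-\pi}{\pi}m_\alpha(t)|h(t)|^2\,dt\ge e^{-\alpha\pi^2}\lnorm h\rnorm_{\LtwoT}^2,
\]
which is strictly positive whenever $h\ne 0$. For invertibility I would exhibit the inverse explicitly: since $1/m_\alpha(t)=e^{\alpha(\pi^2-t^2)}$ is again bounded on $[-\pi,\pi]$ (by $e^{\alpha\pi^2}$), multiplication by $1/m_\alpha$ is itself a bounded operator on $\LtwoT$, and it is a two-sided inverse of $\Mop{\alpha}$ because $m_\alpha\cdot(1/m_\alpha)=1$ pointwise. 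In truth there is no serious obstacle here; the only point that warrants any care is ensuring that the lower bound $e^{-\alpha\pi^2}$ is genuinely positive, which is precisely what simultaneously delivers strict positivity and, through the bounded reciprocal, invertibility — and which would evaporate were the interval replaced by the whole line.
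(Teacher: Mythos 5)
Your proof is correct, and since the paper simply asserts these properties as ``easy to verify'' without supplying an argument, your multiplication-operator reasoning (two-sided bound $e^{-\alpha\pi^2}\le e^{-\alpha(\pi^2-t^2)}\le 1$ on $[-\pi,\pi]$, real multiplier for self-adjointness, bounded reciprocal for invertibility) is exactly the intended verification. Nothing further is needed.
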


In what follows, we let 
$(x_j:j\in\Z)$
be a fixed Riesz-basis sequence, and let
$r:\LtwoR\to\LtwoT$
denote the map which sends a function
in $\LtwoR$ to its restriction to the interval
$[-\pi,\pi]$; note that $r$ is a bounded
linear map with unit norm.  

Let $f\in\PW$.  
Recall (from part (iii)
of Theorem \ref{T:1.6}) that, if
$\Il(f)$ denotes the Gaussian interpolant
to $f$ at the points $(x_j)$, then
\begin{equation}\label{E:3.1}
\psi_\lam(t)=
\sqrt{(\lam/\pi)}\,e^{t^2/(4\lam)}\FT{\Il(f)}(t)
\end{equation}
for almost every $t$ in $[-\pi,\pi]$
(remembering that $\psi_\lam$ is 
the restriction of $\Psi_\lam$ to the
interval $[-\pi,\pi]$).
With all this in mind,
we find from the definitions
of $\Top{\alpha}$ and $\Mop{\alpha}$ that
equation \eqref{E:2.9} may be cast
in the following form:
\begin{equation}\label{E:3.2}
\left(
\cI+\Top{1/(4\lam)}\Mop{1/(4\lam)}\right)
r(\FT{\Il(f)})=r(\FT{f}),
\end{equation}
where $\cI$ denotes the identity 
on $\LtwoT$. 

Suppose that $g\in\LtwoT$, and let
\[{\tilde g}(t)=
\begin{cases}g(t)&\text{if $t\in[-\pi,\pi]$};\\
0&\text{if $t\in\R\setminus[-\pi,\pi]$}.
\end{cases}
\]
Then there
is an $f\in\PW$ such that $\FT{f}={\tilde g}$;
in fact, we may take $f$ to be the following:
\begin{equation*}
f(x):=\frac{1}{\Lwoodone}\lint{-\pi}{\pi}g(t)e^{ixt}\,dt, \qquad x\in\R.
\end{equation*} 
Let $\Il(f)$ denote the interpolant to $f$ at 
the points $(x_j)$, and define $L_\lam(g):=r(\FT{\Il(f)})$;
in other words, $L_\lam=r\circ\cF\circ\Il\circ{\cF}^{-1}$.
As the Fourier transform $\cF$ is a linear
isomorphism on $\LtwoR$, and the maps
$\Il$ and $r$ are linear and continuous, 
the map
$g\mapsto L_\lam(g)$ is a continuous linear operator
on $\LtwoT$; moreover, equation \eqref{E:3.2}
affirms that 
\begin{equation}\label{E:3.3}
\left(
\cI+\Top{1/(4\lam)}\Mop{1/(4\lam)}\right)
L_\lam(g)=g.
\end{equation}
This being true
for every $g\in\LtwoT$, we deduce that the map 
$\cI+\Top{1/(4\lam)}\Mop{1/(4\lam)}$ is 
surjective on $\LtwoT$, and that 
$L_\lam$ is a right inverse of 
$\cI+\Top{1/(4\lam)}\Mop{1/(4\lam)}$.  We now 
show that $\cI+\Top{1/(4\lam)}\Mop{1/(4\lam)}$
is, in fact, invertible on $\LtwoT$.

\begin{proposition}\label{P:3.1}
The map $\cI+\Top{1/(4\lam)}\Mop{1/(4\lam)}$
is injective, hence invertible, on 
$\LtwoT$.  Moreover,
there is a  constant $\Delta$, 
depending only on the sequence $(x_j)$,
such that
\[
\lnorm\left(
\cI+\Top{1/(4\lam)}\Mop{1/(4\lam)}\right)^{-1}\rnorm\le\Delta,
\qquad 0<\lambda\le1.
\]
\end{proposition}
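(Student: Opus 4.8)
The plan is to show that the operator $M:=\cI+\Top{\alpha}\Mop{\alpha}$, where $\alpha:=1/(4\lam)$, is injective, then to identify its inverse with the right inverse $L_\lam$ already produced in \eqref{E:3.3}, and finally to read off the uniform bound from the uniform estimate on $\|L_\lam\|$ that the labor of Section~2 has bought us. At the outset I would emphasize why the naive route is hopeless: since $\|\Top{\alpha}\|\to\infty$ and $\lnorm\Mop{\alpha}^{-1}\rnorm=e^{\pi^2\alpha}\to\infty$ as $\lam\to0^+$, neither a Neumann series nor the clean factorization $M=(\Mop{\alpha}^{-1}+\Top{\alpha})\Mop{\alpha}$ (which already yields invertibility) gives any control on $\lnorm M^{-1}\rnorm$ that survives the limit $\lam\to0^+$. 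The uniform bound must be imported from the delicate energy estimates, not derived from the operators $\Top{\alpha},\Mop{\alpha}$ in isolation. This is the one genuinely substantive point; everything else is soft.

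For injectivity I would post-multiply by the inverse of $\Mop{\alpha}$, which exists by Proposition~\ref{P:3.0}, to obtain
\[
M\,\Mop{\alpha}^{-1}=\Mop{\alpha}^{-1}+\Top{\alpha}.
\]
The right-hand side is self adjoint, being the sum of multiplication by the real function $e^{(\pi^2-(\cdot)^2)/(4\lam)}$ and the self-adjoint operator $\Top{\alpha}$ (Proposition~\ref{P:2.4}). On $[-\pi,\pi]$ the multiplier is $\ge1$, so $\Mop{\alpha}^{-1}\ge\cI$, and since $\Top{\alpha}\ge0$ we get $\Mop{\alpha}^{-1}+\Top{\alpha}\ge\cI$; in particular this operator is bounded below and hence injective. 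Consequently, if $Mu=0$, then setting $z:=\Mop{\alpha}u$ gives $(\Mop{\alpha}^{-1}+\Top{\alpha})z=M\,\Mop{\alpha}^{-1}z=Mu=0$, so $z=0$ and thus $u=0$. Hence $M$ is injective.

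Next I would combine injectivity with the right-inverse relation $ML_\lam=\cI$ from \eqref{E:3.3}: for any $h\in\LtwoT$ one has $M(L_\lam Mh-h)=(ML_\lam)(Mh)-Mh=0$, so injectivity forces $L_\lam Mh=h$; thus $L_\lam M=\cI$ as well, and $M$ is invertible with $M^{-1}=L_\lam$. It then remains only to bound $\|L_\lam\|$ uniformly for $0<\lam\le1$, which is the crux. Given $g\in\LtwoT$, choose $f\in\PW$ with $\FT{f}=\tilde g$ (the zero-extension of $g$); by \eqref{E:0.1}, $\|f\|_{L_2(\R)}=(2\pi)^{-1/2}\|\tilde g\|_{L_2(\R)}=(2\pi)^{-1/2}\|g\|_{\LtwoT}$. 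Since $r$ is the restriction to $[-\pi,\pi]$ and $L_\lam(g)=r(\FT{\Il(f)})$, Corollary~\ref{C:2.6} gives
\[
\lnorm L_\lam(g)\rnorm_{\LtwoT}=\lnorm\FT{\Il(f)}\rnorm_{\LtwoT}\le\sqrt{2\pi}\,[1+2B^4]\,\|f\|_{L_2(\R)}=[1+2B^4]\,\|g\|_{\LtwoT}.
\]
Therefore $\lnorm M^{-1}\rnorm=\|L_\lam\|\le 1+2B^4=:\Delta$, a constant depending only on $(x_j)$ through $B$ and independent of $\lam\in(0,1]$, as required. The whole argument hinges on this final uniform estimate, and its difficulty has, in effect, already been absorbed into Corollary~\ref{C:2.6}.
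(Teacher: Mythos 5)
Your proof is correct and follows essentially the same route as the paper: injectivity from the positivity of $\Top{1/(4\lam)}$ and the strict positivity of $\Mop{1/(4\lam)}$ (your conjugation by $\Mop{1/(4\lam)}^{-1}$ computes the same inner product the paper tests against $\Mop{1/(4\lam)}(g)$), followed by the identification $\left(\cI+\Top{1/(4\lam)}\Mop{1/(4\lam)}\right)^{-1}=L_\lam$ and the importation of the uniform bound from Section~2. Your version is merely more explicit in the last step, tracing the bound back to Corollary~\ref{C:2.6} to extract the concrete constant $\Delta=1+2B^4$ where the paper simply cites Theorem~\ref{T:2.8}.
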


\begin{proof}
Suppose that 
$\left(\cI+\Top{1/(4\lam)}\Mop{1/(4\lam)}\right)g=0$
for some $g\in\LtwoT$.  Then 
\begin{align*}
0&=\left\langle
\left(\cI+\Top{1/(4\lam)}\Mop{1/(4\lam)}\right)(g),\Mop{1/(4\lam)}(g)\right
\rangle\\
&=
\left\langle g,\Mop{1/(4\lam)}(g)\right
\rangle
+\left\langle
\Top{1/(4\lam)}\Mop{1/(4\lam)}(g),\Mop{1/(4\lam)}(g)\right
\rangle \\
\notag &\ge\left\langle g, \Mop{1/(4\lam)}(g)\right
\rangle\ge0,
\end{align*}
where we have used the positivity of the operators
$\Top{1/(4\lam)}$ and $\Mop{1/(4\lam)}$ to obtain
the two inequalities above.  It follows
that $\Mop{1/(4\lam)}(g)=0$, and,
as $\Mop{1/(4\lam)}$ is 
strictly positive, $g$ must be zero.  Hence 
$\cI+\Top{1/(4\lam)}\Mop{1/(4\lam)}$ is injective,
and therefore invertible.
So \eqref{E:3.3} may now be stated as
follows: 
\[
\left(
\cI+\Top{1/(4\lam)}\Mop{1/(4\lam)}\right)^{-1}=L_\lam.
\]
Consequently, the uniform boundedness of 
$\Big\|\left(
\cI+\Top{1/(4\lam)}\Mop{1/(4\lam)}\right)^{-1}\Big\|
$
for $\lam\in(0,1]$ obtains from recalling
the equation 
$L_\lam=r\circ\cF\circ\Il\circ{\cF}^{-1}$, along with 
Theorem \ref{T:2.8}.
\end{proof}

We are now ready for the first of the two focal
results of this section.

\begin{theorem}\label{T:3.2}
If $f\in\PW$, then 
\[
\lim_{\lam\to0^+}\lnorm f-\Il(f)\rnorm_{\LtwoR}=0.
\]
\end{theorem}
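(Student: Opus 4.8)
The plan is to pass to Fourier transforms and split the error according to its behaviour on $[-\pi,\pi]$ and on the complement. Since $f\in\PW$, the transform $\FT{f}$ is supported in $[-\pi,\pi]$, so \eqref{E:0.1} gives
\begin{equation*}
2\pi\|f-\Il(f)\|_{\LtwoR}^2=\|\FT{f}-\FT{\Il(f)}\|_{\LtwoT}^2+\|\FT{\Il(f)}\|_{L_2(\R\setminus[-\pi,\pi])}^2 .
\end{equation*}
I would abbreviate $T:=\Top{1/(4\lam)}$, $M:=\Mop{1/(4\lam)}$, and $u:=r(\FT{\Il(f)})$, so that \eqref{E:3.2} reads $(\cI+TM)u=r(\FT{f})$. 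The entire argument then reduces to one quantitative statement: that $\|Mu\|_{\LtwoT}\to0$ as $\lam\to0^+$.

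The key estimate is obtained exactly as in Proposition \ref{P:2.5}. Pairing $(\cI+TM)u=r(\FT{f})$ with $Mu$ and using that $T$ is positive (Proposition \ref{P:2.4}) and that $M$ is self adjoint and positive (Proposition \ref{P:3.0}), one finds $\langle Mu,u\rangle\le\langle r(\FT{f}),Mu\rangle$. Writing $M=N^2$ with $N:=\Mop{1/(8\lam)}$ (again self adjoint, positive, of norm at most $1$ by Proposition \ref{P:3.0}), this reads $\|Nu\|^2\le\langle Nr(\FT{f}),Nu\rangle\le\|Nr(\FT{f})\|\,\|Nu\|$, whence $\|Nu\|\le\|Nr(\FT{f})\|$. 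Now $Nr(\FT{f})$ is the \emph{fixed} function $\FT{f}$ multiplied by $e^{-(\pi^2-(\cdot)^2)/(8\lam)}$, which tends to $0$ pointwise on $(-\pi,\pi)$ and is dominated by $|\FT{f}|$; the Dominated Convergence Theorem therefore gives $\|Nr(\FT{f})\|\to0$, hence $\|Nu\|\to0$, and finally $\|Mu\|=\|N(Nu)\|\le\|Nu\|\to0$ since $\|N\|\le1$.

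With $\|Mu\|\to0$ in hand, both pieces of the error are dispatched. For the part on $[-\pi,\pi]$, I would rewrite \eqref{E:3.2} as $r(\FT{f})-u=TMu$, so the first term equals $\|TMu\|\le\|T\|\,\|Mu\|$; since Proposition \ref{P:2.4} bounds $\|T\|$ by $e^{\pi^2/(4\lam)}B^4\kappa(\pi^2/(4\lam))$, which stays bounded for $\lam\le1$, this tends to $0$. For the tail, \eqref{E:3.1} gives $\sqrt{\pi/\lam}\,\psi_\lam=e^{\pi^2/(4\lam)}Mu$ on $[-\pi,\pi]$, so substituting into the first inequality of Proposition \ref{P:2.7} the factor $e^{\pi^2/(4\lam)}$ cancels against $\sqrt{\kappa(\pi^2/(2\lam))}\le C\,e^{-\pi^2/(4\lam)}$ (valid for $\lam\le1$), leaving $\|\FT{\Il(f)}\|_{L_2(\R\setminus[-\pi,\pi])}\le CB^2\|Mu\|\to0$. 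Combining the two displays completes the proof.

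I expect the main obstacle to be precisely that one cannot argue through operator norms alone: both $\|T\|$ and $\|M\|$ are merely bounded (not small) as $\lam\to0^+$, so $(\cI+TM)^{-1}$ does not converge to $\cI$ in norm. The decisive point is that $u=u_\lam$ moves with $\lam$, yet the positivity-based energy inequality $\|Nu\|\le\|Nr(\FT{f})\|$ transfers the decay from the fixed vector $r(\FT{f})$ to the moving vector $u$; everything else is bookkeeping exploiting the exact exponential cancellation between the growth of $M$ and the decay of $\kappa$.
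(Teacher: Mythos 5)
Your argument is correct, and it differs from the paper's in the one place that matters. The shared skeleton is the same: pass to Fourier transforms, split the error into $[-\pi,\pi]$ and its complement via \eqref{E:3.5}, use the identity \eqref{E:3.2}, handle the tail with the first inequality of Proposition \ref{P:2.7}, and finish with dominated convergence. The divergence is in how the piece on $[-\pi,\pi]$ is controlled. The paper writes $\FT{f}-\FT{\Il(f)}=\big(\cI+\Top{\lam'}\Mop{\lam'}\big)^{-1}\Top{\lam'}\Mop{\lam'}(\FT{f})$ and therefore needs Proposition \ref{P:3.1} -- the uniform boundedness of $\big(\cI+\Top{\lam'}\Mop{\lam'}\big)^{-1}$ for $\lam\in(0,1]$ -- which in turn rests on the identification $L_\lam=r\circ\cF\circ\Il\circ\cF^{-1}$ and on Theorem \ref{T:2.8}; it then converts $\lnorm\Mop{\lam'}(\FT{\Il(f)})\rnorm$ back into $\lnorm\Mop{\lam'}(\FT{f})\rnorm$ in \eqref{E:3.9}. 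You instead never invert the operator: from $r(\FT{f})-u=\Top{\lam'}\Mop{\lam'}u$ you only need $\lnorm\Mop{\lam'}u\rnorm\to0$, and you get this from the positivity pairing $\langle u,\Mop{\lam'}u\rangle\le\langle r(\FT{f}),\Mop{\lam'}u\rangle$ together with the factorization $\Mop{1/(4\lam)}=\Mop{1/(8\lam)}^2$, which yields the clean transfer inequality $\lnorm\Mop{1/(8\lam)}u\rnorm\le\lnorm\Mop{1/(8\lam)}r(\FT{f})\rnorm$ (this is indeed the same mechanism as Proposition \ref{P:2.5}, just run with the multiplier $\Mop{}$ in place of the Gaussian weight). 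What your route buys is a more self-contained proof: it uses only the positivity of $\Top{\lam'}$ and $\Mop{\lam'}$ and the $O(1)$ bound on $\lnorm\Top{\lam'}\rnorm$, and it does not invoke the uniform boundedness of the interpolation operators at all. What the paper's route buys is the explicit operator identity $\big(\cI+\Top{\lam'}\Mop{\lam'}\big)^{-1}=L_\lam$, which it wants anyway for Proposition \ref{P:3.1} and which packages the error as a single resolvent applied to $\FT{f}$. The small points to keep straight in your write-up are that the inner product is complex -- so one should note that $\langle u,\Mop{\lam'}u\rangle$ and $\langle \Top{\lam'}\Mop{\lam'}u,\Mop{\lam'}u\rangle$ are real and nonnegative before concluding that $\langle r(\FT{f}),\Mop{\lam'}u\rangle$ dominates $\lnorm\Mop{1/(8\lam)}u\rnorm^2$ -- and that the exponential cancellation in the tail estimate requires $\lam\le1$ (or any fixed upper bound) so that $\kappa(\pi^2/(2\lam))\le Ce^{-\pi^2/(2\lam)}$; both are exactly as in the paper's own computations.
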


\begin{proof}
In view of the first identity in \eqref{E:0.1},
it is sufficient to show that 
\begin{equation}\label{E:3.4}
\lim_{\lam\to0^+}\lnorm\FT{f}-\FT{\Il(f)}\rnorm_{\LtwoR}=0.
\end{equation}
Assume that $0<\lam\le1$, and let $\lambda':=1/(4\lam)$.
As $\FT{f}$ is zero almost everywhere outside
$[-\pi,\pi]$, we see that
\begin{equation}\label{E:3.5}
\lnorm\FT{f}-\FT{\Il(f)}\rnorm_{\LtwoR}^2=
\lnorm\FT{f}-\FT{\Il(f)}\rnorm_{\LtwoT}^2\!
+\!\lnorm\FT{\Il(f)}\rnorm_{L_2(\R\setminus[-\pi,\pi])}^2.
\end{equation}
On the interval $[-\pi,\pi]$, we have, via
\eqref{E:3.2}, that 
\begin{align*}
\FT{f}-\FT{\Il(f)}&=\Big[
\cI-\Big(\cI+\Top{\lam'}\Mop{\lam'}\Big)^{-1}\Big]\FT{f}\\
&=\Big(\cI+\Top{\lam'}\Mop{\lam'}\Big)^{-1}
\Top{\lam'}\Mop{\lam'}\left(\FT{f}\right),
\end{align*}
where the second step is a matter of direct verification.  Consequently
\begin{align}\label{E:3.6}
\|\FT{f}-&\FT{\Il(f)}\|_{\LtwoT}\\
\notag &\le\lnorm\Big(\cI+\Top{\lam'}\Mop{\lam'}\Big)^{-1}\rnorm
\,\lnorm\Top{\lam'}\rnorm\,\lnorm\Mop{\lam'}\left(
\FT{f}\right)\rnorm_{\LtwoT}.
\end{align}
Now Proposition \ref{P:3.1} provides a positive constant
$\Delta$ which bounds the first term on the right
of \eqref{E:3.6} for every $\lam\in(0,1]$.
As $\kappa(\pi^2/(4\lam))=O(e^{-\pi^2/(4\lam)})$
for $0<\lam\le1$,
Proposition \ref{P:2.4}
implies that 
\[
\lnorm\Top{\lam'}\rnorm=O\left(
e^{\pi^2/(4\lam)}\kappa(\pi^2/(4\lam))\right)=O(1),\qquad 0<\lam\le1,
\]
for some Big-O constant which is independent of $\lam$.
Using this pair of estimates in \eqref{E:3.6} provides
\begin{equation}\label{E:3.7}
\lnorm\FT{f}-\FT{\Il(f)}\rnorm_{\LtwoT}=O\left(
\lnorm\Mop{\lam'}\left(
\FT{f}\right)\rnorm_{\LtwoT}\right), \qquad 0<\lam\le1.
\end{equation}
Turning to the second term on the right of 
\eqref{E:3.5}, we see from 
Proposition \ref{P:2.7},
\eqref{E:3.1}, and \eqref{E:3.0} that
\begin{align}\label{E:3.8}
\lnorm\FT{\Il(f)}\rnorm_{L_2(\R\setminus[-\pi,\pi])}^2&=
O\left(\frac{\lnorm\psi_\lam\rnorm_{\LtwoT}^2\kappa(\pi^2/(2\lam))}{\lam}\right)\\
\notag &=O\left(
\lnorm e^{(\cdot)^2/(4\lam)}\FT{\Il(f)}\rnorm_{\LtwoT}^2\kappa(\pi^2/(2\lam))
\right)\\
\notag &=O\left(
e^{\pi^2/(2\lam)}\lnorm\Mop{\lam'}\left(\FT{\Il(f)}\right)
\rnorm_{\LtwoT}^2\kappa(\pi^2/(2\lam))\right)\\
\notag &=O\left(\lnorm\Mop{\lam'}\left(\FT{\Il(f)}\right)
\rnorm_{\LtwoT}^2\right),\qquad 0<\lam\le1,
\end{align}
the final step resulting from the (oft cited)
estimate $\kappa(\pi^2/(2\lam))=O(e^{-\pi^2/(2\lam)})$,
$0<\lam\le1$.  Now
\begin{align}\label{E:3.9}
&\lnorm\Mop{\lam'}\left(\FT{\Il(f)}\right)
\rnorm_{\LtwoT}^2\\
\notag &=O\left(
\lnorm\Mop{\lam'}\left(\FT{\Il(f)}-\FT{f}\right)\rnorm_{\LtwoT}^2
+\lnorm\Mop{\lam'}\left(\FT{f}\right)\rnorm_{\LtwoT}^2\right)\\
\notag &=O\left(\lnorm\Mop{\lam'}\left(\FT{f}\right)\rnorm_{\LtwoT}^2\right),
\end{align}
because $\|\Mop{\lam'}\|\le1$ (Proposition \ref{P:3.0}),
and \eqref{E:3.7} holds.
Combining \eqref{E:3.9} and \eqref{E:3.7}
with \eqref{E:3.5}, we find that
\begin{equation}\label{E:3.9b}
\lnorm\FT{f}-\FT{\Il(f)}\rnorm_{\LtwoR}^2=
O\left(\lnorm\Mop{1/(4\lam)}\left(\FT{f}\right)
\rnorm_{\LtwoT}^2\right)=o(1),\quad \lam\to0^+,
\end{equation}
the last assertion being a consequence of
the Dominated Convergence Theorem.  This
establishes \eqref{E:3.4}, and the proof is complete.
\end{proof}

The final theorem of the section deals
with uniform convergence.

\begin{theorem}\label{T:3.3}
If $f\in\PW$, then $\lim_{\lam\to0^+}\Il(f)(x)=f(x)$,
$x\in\R$, and the convergence is
uniform on $\R$.  
In particular,  
the operators
$I_\lambda$, $0<\lambda \le 1$, are uniformly bounded as  operators
from $\PW$ to $C_0(\R)$, via the Uniform Boundedness Principle.  
\end{theorem}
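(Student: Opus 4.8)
The plan is to derive the uniform convergence from a single $L_1$-estimate on Fourier transforms, and then read off the operator-norm statement from the Uniform Boundedness Principle.

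Since both $f$ and $\Il(f)$ are recovered from their Fourier transforms by inversion (equation \eqref{E:0.5} for $f$, part (iv) of Theorem \ref{T:1.6} for $\Il(f)$), subtracting the two representations gives
\[
|f(x)-\Il(f)(x)|\le\frac{1}{\Lwoodone}\lnorm\FT{f}-\FT{\Il(f)}\rnorm_{\LoneR},\qquad x\in\R .
\]
Both transforms lie in $\LoneR$: $\FT{f}$ because it is supported in $[-\pi,\pi]$, and $\FT{\Il(f)}$ by Theorem \ref{T:1.6}(iii). The right-hand side is independent of $x$, so it suffices to show $\lnorm\FT{f}-\FT{\Il(f)}\rnorm_{\LoneR}\to0$ as $\lam\to0^+$, and this will yield uniform convergence at once. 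I split $\R=[-\pi,\pi]\cup(\R\setminus[-\pi,\pi])$. On $[-\pi,\pi]$, Cauchy--Schwarz bounds the $L_1$-norm by $\sqrt{\Lwoodone}\,\lnorm\FT{f}-\FT{\Il(f)}\rnorm_{\LtwoT}$, which already tends to $0$ by the analysis in the proof of Theorem \ref{T:3.2}: it equals $O\!\left(\lnorm\Mop{1/(4\lam)}(\FT{f})\rnorm_{\LtwoT}\right)=o(1)$ via \eqref{E:3.7} and \eqref{E:3.9b}. On the complement $\FT{f}$ vanishes, so the task reduces to showing $\lnorm\FT{\Il(f)}\rnorm_{L_1(\R\setminus[-\pi,\pi])}\to0$.

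For this last quantity I decompose $\R\setminus[-\pi,\pi]$ into the blocks $[(2l-1)\pi,(2l+1)\pi]$, $l\in\Z\setminus\{0\}$, translate each to $[-\pi,\pi]$, and apply Cauchy--Schwarz on each block after splitting the Gaussian weight $e^{-(t+\Lwoodone l)^2/(4\lam)}$ symmetrically into two copies of $e^{-(t+\Lwoodone l)^2/(8\lam)}$. Using $\lnorm A_l\rnorm\le B^2$ from \eqref{E:2.2} and summing the resulting series through Proposition \ref{P:1.0}, I expect a bound of the shape
\[
\lnorm\FT{\Il(f)}\rnorm_{L_1(\R\setminus[-\pi,\pi])}
\le c\,\sqrt{\frac{\pi}{\lam}}\,B^2\,\lnorm\psi_\lam\rnorm_{\LtwoT}\,\kappa(\pi^2/(4\lam)).
\]
The delicate point -- and the main obstacle -- is that one must \emph{not} estimate $\lnorm\psi_\lam\rnorm_{\LtwoT}$ crudely through Proposition \ref{P:2.5}, which produces an $e^{\pi^2/(4\lam)}$ that merely cancels $\kappa(\pi^2/(4\lam))$ up to a constant and leaves only boundedness. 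Instead I use the \emph{exact} identity \eqref{E:3.1}, namely $\lnorm\psi_\lam\rnorm_{\LtwoT}=\sqrt{\lam/\pi}\,e^{\pi^2/(4\lam)}\lnorm\Mop{1/(4\lam)}(\FT{\Il(f)})\rnorm_{\LtwoT}$ by \eqref{E:3.0}. Substituting, the prefactors $\sqrt{\pi/\lam}$ and $\sqrt{\lam/\pi}$ cancel, and $e^{\pi^2/(4\lam)}\kappa(\pi^2/(4\lam))=O(1)$ for $\lam\le1$, leaving $\lnorm\FT{\Il(f)}\rnorm_{L_1(\R\setminus[-\pi,\pi])}=O\!\left(\lnorm\Mop{1/(4\lam)}(\FT{\Il(f)})\rnorm_{\LtwoT}\right)$. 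By \eqref{E:3.9} this is $O\!\left(\lnorm\Mop{1/(4\lam)}(\FT{f})\rnorm_{\LtwoT}\right)=o(1)$ by the Dominated Convergence Theorem, exactly as in Theorem \ref{T:3.2}. This establishes the uniform convergence.

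Finally, each $\Il$ is a bounded map $\PW\to C_0(\R)$, by Theorem \ref{T:1.6}(iv) together with the $L_1$-control of $\FT{\Il(f)}$ in (iii). The uniform convergence just proved shows that for every fixed $f\in\PW$ the orbit $\{\Il(f):0<\lam\le1\}$ is bounded in $C_0(\R)$; indeed, the estimates of Section \ref{S:2} (Corollary \ref{C:2.6} on $[-\pi,\pi]$ and the block estimate above in its $O(1)$ form on the complement) already give $\sup_{0<\lam\le1}\lnorm\FT{\Il(f)}\rnorm_{\LoneR}\le c\,\lnorm f\rnorm_{\LtwoR}$, whence $\sup_{0<\lam\le1}\lnorm\Il(f)\rnorm_\infty<\infty$. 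Since $\PW$ and $C_0(\R)$ are both Banach spaces, the Uniform Boundedness Principle promotes this pointwise boundedness to $\sup_{0<\lam\le1}\lnorm\Il\rnorm_{\PW\to C_0(\R)}<\infty$, which is the asserted uniform boundedness.
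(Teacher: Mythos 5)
Your argument is correct and follows essentially the same route as the paper: the inversion formulas reduce everything to an $L_1$ bound on $\FT{f}-\FT{\Il(f)}$, the interval $[-\pi,\pi]$ is handled via Theorem \ref{T:3.2}, and the complement is handled by the block decomposition over $[(2l-1)\pi,(2l+1)\pi]$ together with the exact identity \eqref{E:3.1} (rather than the crude Proposition \ref{P:2.5}) and the Dominated Convergence Theorem, which is precisely the paper's appeal to the arguments of \eqref{E:3.8}--\eqref{E:3.9b}. The only cosmetic differences are your symmetric splitting of the Gaussian weight before Cauchy--Schwarz (the paper pulls out the supremum of the weight first, with the same outcome) and your more explicit justification of the uniform-boundedness assertion.
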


\begin{proof}
Assume that $0<\lam\le1$, and let $x\in\R$.
From \eqref{E:0.5}, part (iv)
of Theorem \ref{T:1.6}, and the fact
that $\FT{f}=0$ almost everywhere on
$\R\setminus[-\pi,\pi]$, we see that
\begin{align}\label{E:3.10}
f(x)-\Il(f)(x)&=\frac{1}{2\pi}\lint{-\pi}{\pi}(
\FT{f}(u)-\FT{\Il(f)}(u))e^{ixu}\,du\\
\notag &\qquad\qquad
-\frac{1}{2\pi}\int_{\R\setminus[-\pi,\pi]}\FT{\Il(f)}(u)e^{ixu}\,du.
\end{align}
The modulus of the first term on the right 
side of \eqref{E:3.10} is no larger than
\begin{equation}\label{E:3.10b}
\lnorm\FT{f}-\FT{\Il(f)}\rnorm_{L_1[-\pi,\pi]}=O\big(
\lnorm\FT{f}\!-\!\FT{\Il(f)}\rnorm_{L_2[-\pi,\pi]}\big)=o(1),\,\lambda\to0^+,
\end{equation}
via \eqref{E:0.1} and Theorem \ref{T:3.2}.  The second term on the right
side of \eqref{E:3.10} is estimated in a familiar way:
\begin{align}\label{E:3.11}
\Big|\int_{\R\setminus[-\pi,\pi]}&\FT{\Il(f)}(u)e^{ixu}\,du
\Big|\\
\notag &\le \int_{\R\setminus[-\pi,\pi]}\left|\FT{\Il(f)}(u)\right|\,du\\
\notag &=\sqrt{\frac{\pi}{\lam}}\,\summ\lint{(2l-1)\pi}{(2l+1)\pi}
e^{-u^2/(4\lam)}|\Psi_\lam(u)|\,du\\
\notag &\le \sqrt{\frac{\pi}{\lam}}\,\summ
e^{-(2|l|-1)^2\pi^2/(4\lam)}\lnorm A_l(\psi_\lam)\rnorm_{L_1[-\pi,\pi]}\\
\notag &=O\left(\frac{1}{\sqrt\lam}\,\summ
e^{-(2|l|-1)^2\pi^2/(4\lam)}\lnorm A_l(\psi_\lam)\rnorm_{L_2[-\pi,\pi]}\right)\\
\notag &=O\left(\frac{
\lnorm\psi_\lam\rnorm_{L_2[-\pi,\pi]}\kappa(\pi^2/(4\lam))}{\sqrt\lam}
\right).
\end{align}
Borrowing the argument which led up to
\eqref{E:3.8}, \eqref{E:3.9}, and the final
conclusion of \eqref{E:3.9b}, we deduce 
that $I_\lambda(f)(x)$ converges  to $f(x)$ uniformly in $\R$.

\end{proof}

\section{A Multidimensional Extension}\label{S:4}

We  consider now the  multidimensional Gaussian interpolation operator.
Let $d\in\N$, and let $(x_j:j\in\N)\subset \R^d$. 
 We say that $(x_{j}:j\in\N)=
(x_{(j,1)}, x_{(j,2)},\ldots,x_{(j,d)}:j\in\N)\subset \R^d$ is a 
{\em $d$-dimensional Riesz-basis sequence}
 if the sequence $(e^{(j)}:j\in\N)$, with 
$$
e_j:[-\pi,\pi]^d\to \mathbb C,\quad 
e_j(t_1,t_2,\ldots, t_d):=e^{-i\langle x_j, t\rangle}=
e^{-i \sum_{l=1}^d x_{(j,l)} t_l },
$$
is a Riesz basis of $L_2[-\pi,\pi]^d$.

In general there is no {\em natural  indexing } of the elements of $\{x_j\}$ by 
$\Z$ or $\Z^d$ if $d\ge 2$; so 
we index generic $d$-dimensional Riesz-basis sequences by $\N$.
Later we shall concentrate on {\em grids } in $\R^d$, {\it i.e.,\/} on 
Riesz bases in $\R^d$ which are products of one-dimensional 
Riesz-basis sequences. In that case 
the natural index set is $\Z^d$.  We note that,
as in the 1-dimensional case, a Riesz 
basis-sequence in $\R^d$ also has to be separated \cite{LM}.

The {\em $d$-dimensional Gaussian function with scaling parameter} 
$\lambda>0$ is defined by
$$
g^{(d)}_\lambda(x_1,x_2,\ldots,x_d)=e^{-\lambda\|x\|^2}
= e^{-\lambda\sum_{j=1}^d x_j^2},
\quad x=(x_1,x_2,\ldots, x_d)\in \R^d.
$$ 

The {\em Fourier transform} on  $L_1(\R^d)$ and 
that in $L_2(\R^d)$  are  defined  as in the 1-dimensional
case, and  we denote  it   by  $\hat g$, if  $g\in L_1(\R^d)$, and
by $\FTd{g}$, if   $g\in L_2(\R^d)$.

The Paley-Wiener functions on $\R^d$ are given by
$$
\PW^{(d)}:=\{ g\in  L_2(\R^d): \FTd{g}=0 \text{ almost everywhere outside $[-\pi,\pi]^d$}\}.
$$

The following multidimensional result can be shown in the same way as 
the corresponding result in the 1-dimensional case (see Theorem  \ref{T:1.6}). 
Its proof is omitted.

\begin{theorem}\label{T:4.1}
 Let $d\in\N$,  let $\lambda$ be  a fixed postive number, 
and let $(x_j:j\in\N)$ be a Riesz-basis sequence
 in $\R^d$.
 For any $f\in \PW^{(d)}$ there exists a unique square-summable 
sequence $(a(j,\lambda):j\in\N)$
such that 
\begin{equation}
 \label{E:4.1.1} \sum_{j\in\N} a(j,\lambda) g^{(d)}_\lambda(x_k-x_j)=f(x_k), \quad k\in\N.
\end{equation}
The {\em Gaussian Interpolation Operator}
$I^{(d)}_\lambda: \PW^{(d)}\to L_2[-\pi,\pi]^d$,
defined by
$$I^{(d)}_\lam(f)(\cdot)=\sum_{j\in\N} a(j,\lambda) g^{(d)}_\lambda(\cdot\!-\!x_j),$$
where $(a(j,\lambda):j\in\N)$ satisfies \eqref{E:4.1.1},
is a well-defined, bounded linear operator from $\PW^{(d)}$ to  $L_2[-\pi,\pi]^d$.
Moreover, $I^{(d)}_\lam(f)\in C_0(\R^d)$.
\end{theorem}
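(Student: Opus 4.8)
The plan is to transcribe the one-dimensional development of Section \ref{S:1} into $\R^d$, exploiting the fact that the two imported tools, Proposition \ref{P:NSW} and Theorem \ref{T:NW}, are already stated in full $d$-dimensional generality. First I would record that a $d$-dimensional Riesz-basis sequence is separated, i.e.\ that there is a $q>0$ with $\|x_j-x_k\|_2\ge q$ for $j\ne k$ (noted in the text after the definition, following \cite{LM}). With separation in hand, Proposition \ref{P:NSW} shows that the row and column sums of the collocation matrix $T:=\big(g^{(d)}_\lambda(x_k-x_j)\big)_{j,k}$ are uniformly bounded, so $T$ acts boundedly on every $\ell_p(\N)$; and since $T$ is real, symmetric, and, by Theorem \ref{T:NW}, bounded below on $\ell_2(\N)$ by a positive $\theta$, it is boundedly invertible on $\ell_2(\N)$. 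This is precisely the $d$-dimensional analogue of Remark \ref{R:invertibility}, and it yields the first assertion: given a square-summable data vector, the system \eqref{E:4.1.1} has a unique square-summable solution $(a(j,\lambda))$.

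Next I would establish the $d$-dimensional analogue of Theorem \ref{T:1.4}: that the synthesis map $\overline a\mapsto\sum_j a_j g^{(d)}_\lambda(\cdot-x_j)$ is bounded from $\ell_2(\N)$ into $L_2(\R^d)$, that its image lies in $C(\R^d)$, and that its Fourier transform equals $(\pi/\lambda)^{d/2}e^{-\|\cdot\|^2/(4\lambda)}\sum_j a_j e^{-i\langle x_j,\cdot\rangle}$. Continuity and boundedness of the sum follow from Proposition \ref{P:NSW} exactly as in Proposition \ref{P:1.1}. For the $L_2$ and $L_1$ bounds I would tile $\R^d$ by the translates $[-\pi,\pi]^d+2\pi l$, $l\in\Z^d$, use the factorized Gaussian transform $(\pi/\lambda)^{d/2}e^{-\|u\|^2/(4\lambda)}$, and on the $l$-th cube invoke the estimate $e^{-\|u\|^2/(2\lambda)}\le\prod_{k=1}^d e^{-(2|l_k|-1)_+^2\pi^2/(2\lambda)}$. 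Summed over $l\in\Z^d$ this weight factorizes into a product of one-dimensional tail sums, each controlled by Proposition \ref{P:1.0}; this product is the $d$-dimensional replacement for $\kappa$. Combined with the Riesz-basis inequality \eqref{E:0.6b} applied through a $\Z^d$-indexed family of shift-restriction operators $A_l$ (the analogues of \eqref{E:2.0}, with uniform bound \eqref{E:2.2}), this reproduces the Cauchy estimates \eqref{E:1.5}--\eqref{E:1.8} and identifies $\FTd{I^{(d)}_\lambda(f)}$ as an element of $L_1(\R^d)\cap L_2(\R^d)$.

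With these pieces the remaining claims follow as in Theorem \ref{T:1.6}. The $d$-dimensional form of Proposition \ref{P:1.5}---that $(f(x_j))\in\ell_2(\N)$ with $\|(f(x_j))\|_{\ell_2}=O(\|f\|_{L_2(\R^d)})$---is proved verbatim, using the coordinate functionals of the Riesz basis $(e_j)$ in $L_2[-\pi,\pi]^d$ and Plancherel (now carrying the factor $(2\pi)^d$ in place of $2\pi$). Writing the coefficient vector as $T^{-1}(f(x_j))$ and composing the $\ell_2$-bound for $T^{-1}$ with the synthesis bound and the sampling bound gives linearity and boundedness of $I^{(d)}_\lambda$ into $L_2(\R^d)$ (hence into $L_2[-\pi,\pi]^d$ after restriction). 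Finally, since $I^{(d)}_\lambda(f)$ is continuous and $\FTd{I^{(d)}_\lambda(f)}\in L_1(\R^d)\cap L_2(\R^d)$, the $d$-dimensional inversion formula (the analogue of \eqref{E:0.4}) represents $I^{(d)}_\lambda(f)$ as the inverse transform of an $L_1$ function, so the Riemann--Lebesgue lemma places it in $C_0(\R^d)$.

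The step requiring the most care is the tiling and summability estimate of the second paragraph: one must verify that the product over the coordinates of $l\in\Z^d$ of the one-dimensional Gaussian tail sums is genuinely summable, and that the operators $A_l$ stay uniformly bounded when the single index $l\in\Z$ is promoted to a multi-index $l\in\Z^d$. Both facts are routine consequences of the product structure of $g^{(d)}_\lambda$ together with \eqref{E:2.2}, but they are the only genuinely new computations; everything else is a faithful transcription in which the scalar constants $\sqrt{\pi/\lambda}$ and $2\pi$ are replaced by $(\pi/\lambda)^{d/2}$ and $(2\pi)^d$.
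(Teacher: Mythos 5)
Your proposal is correct and follows exactly the route the paper intends: the authors omit the proof of Theorem \ref{T:4.1}, stating only that it "can be shown in the same way as" Theorem \ref{T:1.6}, and your transcription (separation of the nodes, Schur-test boundedness plus Theorem \ref{T:NW} for invertibility of the collocation matrix, the $\Z^d$-tiling with the product of one-dimensional Gaussian tail sums replacing $\kappa$, the uniform bound on the multi-indexed $A_l$, and the $d$-dimensional sampling inequality and inversion formula) is precisely the adaptation they have in mind. You also correctly identify the only points needing genuine verification, namely the summability of the product tail weights and the uniform boundedness of the shift-restriction operators over $l\in\Z^d$.
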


We now  generalize Theorems \ref{T:3.2} and \ref{T:3.3} to the multidimensional
 case assuming that the underlying Riesz-basis sequence is a grid. For simplicity
 we restrict ourselves to the case when $d=2$, but note that our arguments
 carry over readily to all other values of $d$.  
It ought to be noted, however, that even this
simplistic situation, namely that our data
sites form a grid, cannot be handled via a straightforward 
multivariate extension
of the crucial ingredients from Section 3.  In particular, the proofs 
of Corollary 3.7 and Proposition 3.8 do not extend to higher
dimensions.  So we pursue a different tack below.    

We recall some basic tools 
from tensor products. Let $X$ and $Y$ be two Banach
 spaces and $X^*$ and $Y^*$ their  dual spaces.
The algebraic tensor product of $X$ and $Y$ is denoted by
$X\!\otimes\! Y$, and consists of the vector space 
of all linear combinations of elementary products
 $x\!\otimes\! y$ with $x\in X$ and $y\in Y$.
 All our Banach spaces are spaces of functions, and we can therefore identify
  elements of $X\!\otimes\! Y$ with functions on a product of sets. 
 
A norm $\alpha(\cdot)$ on $X\!\otimes\! Y$ is called 
a {\em reasonable cross norm }
 if $\alpha(x\!\otimes\! y)\le \|x\|\cdot\|y\|$, 
and if for $\phi\in X^*$ and $\psi\in Y^*$, the
map
$\phi\!\otimes\! \psi: X\!\otimes\! Y\to C,\quad \sum x_i\!\otimes\! y_i\mapsto \sum \phi(x_i)\psi(y_i)$,
is bounded on $(X\!\otimes\! Y,\alpha)$, and
$\alpha(\phi\!\otimes\! \psi)=\sup_{u\in X\!\otimes\! Y, \alpha(u)\le 1} |\phi\!\otimes\! \psi(u)|\le \|\phi\|\cdot\|\psi\|$.
If $\alpha(\cdot)$ is a norm on $X\!\otimes\! Y$, we denote the completion
 of $X\!\otimes\! Y$ with respect to $\alpha(\cdot)$ by $X\!\otimes\!_\alpha Y$.

\begin{proposition}\label{P:4.2}{cf. \rm\cite[Section  and 3.1, Proposition 6.1]{Ry}}\\
Let $X$ and $Y$ be Banach spaces. For $u\in X\!\otimes\! Y$ define
\begin{align*}
\vp(u)&:=\sup\{ \phi\!\otimes\! \psi(u): \phi\in X^*,\,\psi\in Y^*, \|\phi\|,\|\psi\|\le 1\}\\ 
\end{align*}
\item{(i)} $\vp(\cdot)$ is a reasonable cross norm, 
and  we call $\vp$  the {\em injective tensor norm on $X\!\otimes\! Y$}.
\item{(ii)} If $\alpha(\cdot)$ is any reasonable cross norm on $X\times Y$, then
$$
\vp(u)\le \alpha(u), \quad u\in X\!\otimes\! Y.
$$
\item{(iii)} For Banach spaces $V$ and $W$, and bounded operators
 $S:X\to U$ and $T:Y\to W$, the map
$S \!\otimes\!T: X\!\otimes\! Y\to V\!\otimes\! W$,
defined by
$S\!\otimes\!T(\sum_{i=1}^n x_i\!\otimes\! y_i) =  
   \sum_{i=1}^n S(x_i)\!\otimes\! T(y_i)$, 
extends (uniquely, by density) to a bounded operator 
$S\!\otimes_\vp\! T:  X\!\otimes_\vp\! Y\to V\!\otimes_\vp\! W$, and
 $\|S\!\otimes_\vp\! T\|= \|S\|\cdot \|T\|$.
\end{proposition}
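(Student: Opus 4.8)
The plan is to derive all three assertions straight from the definition of $\vp$ as the supremum of the functionals $u \mapsto (\phi \otimes \psi)(u)$ over $\|\phi\|,\|\psi\| \le 1$, so that no structure theory of tensor products is needed. For part (i), $\vp$ is manifestly a seminorm, being a supremum of absolute values of linear maps; the only real point is positive-definiteness. Given $u \ne 0$ I would fix a representation $u = \sum_{i=1}^n x_i \otimes y_i$ with $y_1,\dots,y_n$ linearly independent, so that some coefficient, say $x_1$, is nonzero. Hahn--Banach interpolation yields $\psi \in Y^*$ with $\psi(y_1) = 1$ and $\psi(y_i) = 0$ for $i \ge 2$, whence $(\phi \otimes \psi)(u) = \phi(x_1)$, and a second application supplies $\phi \in X^*$ with $\phi(x_1) \ne 0$; normalizing then forces $\vp(u) > 0$. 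The cross-norm bound $\vp(x \otimes y) \le \|x\|\,\|y\|$ is immediate, and the dual bound $\vp(\phi \otimes \psi) \le \|\phi\|\,\|\psi\|$ follows by homogeneity, since normalizing $\phi,\psi$ and comparing against the defining supremum gives $|(\phi \otimes \psi)(u)| \le \|\phi\|\,\|\psi\|\,\vp(u)$ for all $u$.

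Part (ii) is where the dual cross-norm clause in the definition of a reasonable cross norm is decisive. Fixing $u$ and any $\phi,\psi$ with $\|\phi\|,\|\psi\| \le 1$, the hypothesis on $\alpha$ gives $|(\phi \otimes \psi)(u)| \le \alpha(\phi \otimes \psi)\,\alpha(u) \le \|\phi\|\,\|\psi\|\,\alpha(u) \le \alpha(u)$; taking the supremum over such $\phi,\psi$ yields $\vp(u) \le \alpha(u)$, so $\vp$ is the least reasonable cross norm.

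For part (iii), the computational crux is the identity $(\phi \otimes \psi) \circ (S \otimes T) = (S^*\phi) \otimes (T^*\psi)$ on $X \otimes Y$, checked on elementary tensors and extended by linearity. Granting it, for $u \in X \otimes Y$ and $\|\phi\|,\|\psi\| \le 1$ I would estimate
\[
|(\phi \otimes \psi)(S \otimes T(u))| = |((S^*\phi) \otimes (T^*\psi))(u)| \le \|S^*\phi\|\,\|T^*\psi\|\,\vp(u) \le \|S\|\,\|T\|\,\vp(u),
\]
invoking the dual bound from part (i) and $\|S^*\| = \|S\|$. Passing to the supremum over $\phi,\psi$ gives $\vp(S \otimes T(u)) \le \|S\|\,\|T\|\,\vp(u)$, so $S \otimes T$ is $\vp$-to-$\vp$ bounded with norm at most $\|S\|\,\|T\|$ and extends uniquely, by density, to $S \otimes_\vp T$. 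For the matching lower bound I would test on elementary tensors: Hahn--Banach upgrades the cross-norm bound to the exact identity $\vp(v \otimes w) = \|v\|\,\|w\|$, so that $\|S \otimes T(x \otimes y)\|_\vp / \vp(x \otimes y) = \|Sx\|\,\|Ty\| / (\|x\|\,\|y\|)$, and taking suprema over $x$ and $y$ separately recovers $\|S\|\,\|T\|$.

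None of these steps is deep; the task is disciplined bookkeeping with the two defining inequalities of a reasonable cross norm. If anything is an obstacle, it is keeping track of \emph{which} injective norm is being estimated in part (iii) -- the one on the target $V \otimes W$, which the adjoint identity is precisely engineered to convert back into the injective norm on $X \otimes Y$ -- together with the Hahn--Banach interpolation underpinning positive-definiteness in part (i).
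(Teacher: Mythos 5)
The paper gives no proof of Proposition 4.2 at all --- it is quoted from Ryan's monograph on tensor products --- so there is nothing in the paper to compare your argument against. Your proof is correct and is essentially the standard textbook derivation: positive-definiteness of $\vp$ via a representation with linearly independent $y_i$'s and Hahn--Banach, minimality of $\vp$ among reasonable cross norms via the dual bound $\alpha(\phi\otimes\psi)\le\|\phi\|\,\|\psi\|$, and the norm identity $\|S\otimes_\vp T\|=\|S\|\,\|T\|$ via the adjoint identity $(\phi\otimes\psi)\circ(S\otimes T)=(S^*\phi)\otimes(T^*\psi)$ together with $\vp(v\otimes w)=\|v\|\,\|w\|$ on elementary tensors for the lower bound.
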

On tensor products of Hilbert spaces we can define a  unique tensor norm for which the completions
 are again Hilbert spaces.
\begin{proposition}\label{P:4.3}{\rm cf. \cite[p. 125 ff]{KR} }\\
Let $H$ and $K$ be Hilbert spaces.
\item{(i)} There 
 is a unique inner product $\langle\cdot,\cdot\rangle_{H\!\otimes\! K}$  
on $H\!\otimes\! K$
for which
$$
\qquad\qquad\langle h_1\!\otimes\!k_1, h_2\!\otimes\!k_2\rangle_{H\!\otimes\! K}=
 \langle h_1, h_2\rangle_{H} \langle k_1, k_2\rangle_{K},\,
 h_1,h_2\!\in\!H\text{ and }k_1,k_2\!\in\!K,
$$
where $\langle\cdot,\cdot \rangle_H$ and  $\langle\cdot,\cdot \rangle_K$ denote
the inner products in $H$ and $K$, respectively. We denote the completion
 of $H\!\otimes\! K$ with respect to the
 corresponding Hilbert norm $\|\cdot\|_h$ 
({\it i.e.,\/} $\|u\|_h= \langle u, u\rangle_{H\!\otimes\! K}^{1/2}$, 
for $u\in H\!\otimes\! K$) by $H\otimes_{h} K$.
\item{(ii)} $\|\cdot\|_h$ is a reasonable cross norm on $H\!\otimes\! K$.
\item{(iii)} If $V$ and $W$ are two Hilbert spaces, and $S:H\to V$ and $T:K: W$ 
are two bounded operators
then the map
$S\!\otimes\!T: H\!\otimes\! K\to V\!\otimes\! W$, defined via 
$S\!\otimes\!T(\sum_{i=1}^n x_i\!\otimes\! y_i) =  \sum_{i=1}^n S(x_i)\!\otimes\! T(y_i)$,
extends to a bounded operator $S\!\otimes_h\! T:  X\!\otimes_h\! Y\to V\!\otimes_h\! W$, and
 $\|S\otimes_h\! T\|= \|S\|\cdot \|T\|.$
\item
  Let $(e_i:i\in\N)\subset H$ and $(f_i:i\in\N)\subset K$. Then
  $(e_i:i\in\N)$ and $(f_i:i\in\N)$  are Riesz bases, or orthogonal bases for $H$ and $K$, respectively,
  if and only if $(e_i\!\otimes\! f_j:i,j\in\N)$ is a Riesz basis, or an orthogonal basis, of $H\!\otimes_h\! K$. 
\item If $(\Omega,\Sigma, \mu)$ and $(\Omega',\Sigma', \mu')$ are two measure spaces, and $H=L_2(\mu)$, and
$K=L_2(\mu')$, then $H\!\otimes_h\! K$ is, via the identifcation of $f\otimes g$ with
 the function $\Omega\times \Omega'\ni(\omega,\omega')\mapsto g(\omega)f(\omega')$, isometrically
 isomorphic to $L_2(\mu\times \mu')$.
\end{proposition}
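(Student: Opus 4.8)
The plan is to establish the five items in order, since each leans on the preceding ones. For item~(i), uniqueness is automatic: the prescribed values on elementary tensors together with sesquilinearity pin down the form on all of $H\otimes K$, because every element is a finite sum of elementary tensors. Existence is the first delicate point, since an element has many representations as a sum of elementary tensors. I would fix $h_2,k_2$ and note that $(h_1,k_1)\mapsto\langle h_1,h_2\rangle_H\langle k_1,k_2\rangle_K$ is bilinear, hence factors through $H\otimes K$ to give a well-defined linear functional; letting $h_2,k_2$ vary and conjugate-linearizing produces a well-defined sesquilinear form, independent of the chosen representation. Positive-definiteness I would get from the standard orthonormalization trick: given $u\in H\otimes K$, rewrite $u=\sum_p g_p\otimes f_p$ with $(f_p)$ a finite \emph{orthonormal} family in $K$; then $\langle u,u\rangle_{H\otimes K}=\sum_p\|g_p\|_H^2\ge0$, with equality forcing every $g_p=0$ and hence $u=0$ by linear independence of the $f_p$. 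This confirms that the form is genuinely an inner product.

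For item~(ii), $\|h\otimes k\|_h=\|h\|_H\|k\|_K$ is read off directly from the defining identity, giving the first cross-norm requirement (in fact with equality). For the second, I would invoke the Riesz representation theorem to write $\phi=\langle\cdot,a\rangle_H$ and $\psi=\langle\cdot,b\rangle_K$ with $\|a\|_H=\|\phi\|$ and $\|b\|_K=\|\psi\|$, verify on elementary tensors that $\phi\otimes\psi(u)=\langle u,a\otimes b\rangle_{H\otimes K}$, and conclude $|\phi\otimes\psi(u)|\le\|u\|_h\,\|a\otimes b\|_h=\|\phi\|\,\|\psi\|\,\|u\|_h$ by Cauchy--Schwarz. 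For item~(iii), the lower bound $\|S\otimes_h T\|\ge\|S\|\,\|T\|$ follows by testing on elementary tensors, where $\|(S\otimes T)(h\otimes k)\|_h=\|Sh\|_V\|Tk\|_W$. For the upper bound I would factor $S\otimes T=(S\otimes I)(I\otimes T)$ and estimate each factor using the orthonormal representation from~(i): if $u=\sum_p g_p\otimes f_p$ with $(f_p)$ orthonormal, then $\|(S\otimes I)u\|_h^2=\sum_p\|Sg_p\|^2\le\|S\|^2\|u\|_h^2$, and symmetrically for $I\otimes T$; composing gives $\|S\otimes_h T\|\le\|S\|\,\|T\|$, hence equality.

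For item~(iv) I would treat the orthogonal-basis case first. The forward direction is a direct inner-product computation showing $(e_i\otimes f_j)$ is orthogonal and nonzero, while completeness follows by the perpendicularity argument: if $h\perp e_i$ for all $i$ then $\langle h\otimes f_{j_0},e_i\otimes f_j\rangle=0$ for all $i,j$, forcing $h\otimes f_{j_0}=0$ and so $h=0$. The converse restricts to a fixed slice $j=j_0$, using $\langle e_i\otimes f_{j_0},e_{i'}\otimes f_{j_0}\rangle=\langle e_i,e_{i'}\rangle\|f_{j_0}\|^2$ and $\|f_{j_0}\|\neq0$. The Riesz-basis case then follows from the orthonormal one: writing $e_i=Su_i$, $f_j=Tv_j$ with $(u_i),(v_j)$ orthonormal bases and $S,T$ bounded invertible, we get $e_i\otimes f_j=(S\otimes_h T)(u_i\otimes v_j)$, where $S\otimes_h T$ is bounded by~(iii) and invertible with inverse $S^{-1}\otimes_h T^{-1}$, so $(e_i\otimes f_j)$ is the image of an orthonormal basis under a bounded invertible map. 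For the converse I would again restrict the Riesz bounds~\eqref{E:0.6b} to coefficient arrays supported on a single slice $j=j_0$, using $\|\sum_i c_i(e_i\otimes f_{j_0})\|_h=\|\sum_i c_i e_i\|_H\,\|f_{j_0}\|_K$ to transfer the two-sided bounds to $(e_i)$, and completeness as above.

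Finally, for item~(v), I would define $\Phi$ on elementary tensors by $\Phi(f\otimes g)(\omega,\omega')=f(\omega)g(\omega')$ and check, via Fubini--Tonelli, that $\langle\Phi(f_1\otimes g_1),\Phi(f_2\otimes g_2)\rangle_{L_2(\mu\times\mu')}=\langle f_1,f_2\rangle\langle g_1,g_2\rangle=\langle f_1\otimes g_1,f_2\otimes g_2\rangle_h$; since inner products are preserved on elementary tensors, $\Phi$ is a well-defined isometry on the algebraic tensor product, extending to an isometry of completions. Surjectivity follows since its range is closed (isometric image of a complete space) and dense (finite sums of products $f(\omega)g(\omega')$ approximate functions on measurable rectangles, which span a dense subspace of $L_2(\mu\times\mu')$); alternatively one may simply map an orthonormal basis $(e_i\otimes f_j)$ from~(iv) onto the classical orthonormal basis $(e_i(\omega)f_j(\omega'))$ of $L_2(\mu\times\mu')$. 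The two steps I expect to require the most care are the representation-independence and strict positivity of the form in~(i), and the converse direction of~(iv); the rest is routine once the orthonormal-representation device of~(i) is in hand.
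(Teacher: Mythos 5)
The paper does not actually prove Proposition 4.3: it is quoted as a known fact with a pointer to Kadison--Ringrose \cite[p.~125 ff]{KR}, and no argument appears in the text. Your proposal therefore goes well beyond the paper by supplying a self-contained proof, and the argument you give is the standard one and is essentially sound: the bilinear-factorization construction of the form in (i), the orthonormal-representation device $u=\sum_p g_p\otimes f_p$ with $(f_p)$ orthonormal (which you correctly reuse for positive-definiteness, for the norm identity $\|u\|_h^2=\sum_p\|g_p\|_H^2$, and for the upper bound in (iii) via the factorization $S\otimes T=(S\otimes I)(I\otimes T)$), the Riesz-representation argument for (ii), the reduction of the Riesz-basis case of (iv) to the orthonormal case through a bounded invertible $S\otimes_h T$, and the isometry-plus-density argument for (v). One step is written too loosely to stand as is: your completeness argument in the orthogonal-basis direction of (iv) only shows that an \emph{elementary} tensor $h\otimes f_{j_0}$ orthogonal to every $e_i\otimes f_j$ vanishes, whereas completeness requires that an arbitrary element of the completion $H\otimes_h K$ orthogonal to the family vanishes. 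The repair is routine: since $\|\cdot\|_h$ is a cross norm, the map $(h,k)\mapsto h\otimes k$ is continuous, so the closed span of $\{e_i\otimes f_j\}$ contains all elementary tensors (approximate $h$ and $k$ by finite combinations of the $e_i$ and $f_j$), hence contains the dense subspace $H\otimes K$, and density of the span together with orthogonality gives the basis property. A similar density remark is what justifies surjectivity in (v) (and there one should tacitly assume $\sigma$-finiteness, which holds in the paper's application). With these two points tightened, your proof is complete; what it buys over the paper's bare citation is a verifiable, elementary account of exactly the properties of $\otimes_h$ that Section 5 of the paper uses.
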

Using 
Propositon \ref{P:4.3} and the identification of $L_2(\R)\!\otimes\! L_2(\R)$ with 
 $L_2(\R^2)$, we deduce that $\FTt{\cdot}= \FT{\cdot}\!\otimes_h\!\FT{\cdot}$.
 Consequently,
\begin{align*}
\PWt=(\cF^{(2)})^{-1}(L_2[-\pi,\pi]^2)&=
(\cF\!\otimes_h\cF)^{-1}\big(L_2[-\pi,\pi]\!\otimes_h\!L_2[-\pi,\pi]\big)\\
 &= \PW\!\otimes_h\!\PW.
\end{align*}

For the remainder of this section 
we fix two Riesz-basis sequences $(x_j:j\in\Z)$ and $(y_j:j\in\Z)$, 
and we put
$z_{(l,k)}= (x_{l},y_k)$ for $l,k\in\Z$. By Proposition \ref{P:4.3},
$(z_{(l,k)}:l,k\in\Z)$ is a Riesz-basis sequence on $\R^2$, and 
we denote the  1-dimensional Gaussian interpolation operator 
corresponding to $\lam>0$ and 
$(x_j:j\in\Z)$ by $\Il$, and the one associated to
$\lam>0$
and $(y_j:j\in\Z)$ by $\Il'$.
The  2-dimensional Gaussian interpolation operator 
corresponding to  $(z_{(l,k)}:l,k\in\Z)$ is denoted  
 by $I^{(2)}_\lambda$.
\begin{proposition}\label{P:4.4} For any positive number $\lambda$,  
$$
I^{(2)}_\lambda=\Il\!\otimes_h\!\Il'.
$$
\end{proposition}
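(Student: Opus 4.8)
The plan is to exploit the uniqueness of the interpolation coefficients (Theorem \ref{T:4.1}) together with the fact that both $I^{(2)}_\lambda$ and $\Il\otimes_h\Il'$ are bounded linear operators on the Hilbert space $\PWt=\PW\otimes_h\PW$. Since the algebraic tensor product is dense in its completion, the elementary tensors $f\otimes g$ with $f,g\in\PW$ span a dense subspace of $\PW\otimes_h\PW$; as both maps are continuous, it suffices to verify the asserted identity on such elementary tensors. Under the identification of Proposition \ref{P:4.3}(v) I regard $f\otimes g$ as the function $(s,t)\mapsto f(s)g(t)$ on $\R^2$, and by Proposition \ref{P:4.3}(iii) one has $(\Il\otimes_h\Il')(f\otimes g)=\Il(f)\otimes\Il'(g)$, i.e. the function $(s,t)\mapsto \Il(f)(s)\,\Il'(g)(t)$. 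Thus the whole matter reduces to showing that $I^{(2)}_\lambda(f\otimes g)(s,t)=\Il(f)(s)\,\Il'(g)(t)$ for every $(s,t)\in\R^2$.

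The key observation is that the bivariate Gaussian factors: since $g^{(2)}_\lambda(u,v)=e^{-\lambda(u^2+v^2)}=\gl(u)\gl(v)$, we have $g^{(2)}_\lambda(z_{(m,n)}-z_{(l,k)})=\gl(x_m-x_l)\,\gl(y_n-y_k)$ for all indices. Let $(a_l:l\in\Z)$ be the unique square-summable coefficient sequence for $\Il(f)$, so that $\sum_l a_l\gl(x_m-x_l)=f(x_m)$ for every $m$, and let $(b_k:k\in\Z)$ be the corresponding sequence for $\Il'(g)$, so that $\sum_k b_k\gl(y_n-y_k)=g(y_n)$ for every $n$. Then the tensor sequence $(a_lb_k:l,k\in\Z)$ is square-summable, because $\sum_{l,k}|a_lb_k|^2=\big(\sum_l|a_l|^2\big)\big(\sum_k|b_k|^2\big)<\infty$, and it satisfies the two-dimensional interpolatory conditions: using the factorization and summing the (absolutely convergent) double series as an iterated sum,
\[
\sum_{l,k}a_lb_k\,g^{(2)}_\lambda(z_{(m,n)}-z_{(l,k)})
=\Big(\sum_l a_l\gl(x_m-x_l)\Big)\Big(\sum_k b_k\gl(y_n-y_k)\Big)
=f(x_m)g(y_n).
\]
By the uniqueness assertion of Theorem \ref{T:4.1}, $(a_lb_k)$ is precisely the coefficient sequence defining $I^{(2)}_\lambda(f\otimes g)$.

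It then remains to evaluate the two-dimensional interpolant pointwise and factor it. By Theorem \ref{T:4.1} the interpolant is the (continuous) function $\sum_{l,k}a_lb_k\,g^{(2)}_\lambda(\cdot-z_{(l,k)})$, so the same factorization gives, for fixed $(s,t)$, the value $\big(\sum_l a_l\gl(s-x_l)\big)\big(\sum_k b_k\gl(t-y_k)\big)=\Il(f)(s)\,\Il'(g)(t)$, which is exactly the desired identity. Invoking the density of elementary tensors and the continuity of both operators completes the argument.

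The only technical point to be discharged is the legitimacy of separating the double series into the product of two single series, and this is routine: by the Bunyakovskii--Cauchy--Schwarz inequality and the Gaussian-sum bound of Proposition \ref{P:NSW} (applied to each separated sequence $(x_l)$ and $(y_k)$, the separation being furnished by \eqref{E:0.7}), each single series $\sum_l|a_l|\gl(s-x_l)$ and $\sum_k|b_k|\gl(t-y_k)$ converges, so the double series is absolutely convergent and factors as claimed. I would regard this rearrangement, together with the verification that $f\otimes g$ indeed lies in $\PWt=\PW\otimes_h\PW$, as the only genuine (and minor) obstacles; everything else follows from uniqueness of the interpolant and the product structure of the Gaussian.
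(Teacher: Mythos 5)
Your proposal is correct and follows essentially the same route as the paper's own proof: reduce to elementary tensors $f\otimes h$ by density and boundedness of both operators, use the factorization $g^{(2)}_\lambda(u,v)=\gl(u)\gl(v)$ to check that the product sequence $(a_lb_k)$ satisfies the two-dimensional interpolatory conditions, and invoke the uniqueness clause of Theorem \ref{T:4.1} to identify it as the coefficient sequence of $I^{(2)}_\lambda(f\otimes h)$. Your explicit justification of the absolute convergence and rearrangement of the double series is a point the paper leaves implicit, but it does not change the argument.
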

\begin{proof}
From Theorem \ref{T:1.6}, Theorem \ref{T:4.1}, and Proposition
 \ref{P:4.3}, we find that the 
operators $\Il\!\otimes_h\! \Il'$ and
 $I^{(2)}_\lambda$ are bounded linear operators 
from $\PWt$ into $L_2[-\pi,\pi]^2$.  So we only need to verify that,
 $$
I^{(2)}_\lambda(f\!\otimes h)=\Il(f)\!\otimes_h\! \Il'(h),\qquad 
f,h\in\PW.
$$
Recall from Theorem \ref{T:1.6} that there exists
a unique pair of square-summable sequences
$(a(j,\lambda):j\in\Z)$ and $(b(j,\lambda):j\in\Z)$
such that
$$
\sum_{j\in\Z} a(j,\lambda) g_\lambda(x_k -x_j)=f(x_k)
\text{ and }
\sum_{j\in\Z} b(j,\lambda) g_\lambda(y_k-y_j)=h(y_k),\quad k\in\Z.
$$
This yields for $k,l\in\Z$ 
\begin{align*}
f(x_k)h(y_l)&=\sum_{j\in\Z} a(j,\lambda) g_\lambda(x_k -x_j)
\sum_{j\in\Z} b(j,\lambda) g_\lambda(y_l-y_j)\\
&=\sum_{j,m\in\Z}a(j,\lambda) b(m,\lambda) g_\lambda(x_k -x_j)g_\lambda(y_l -y_m)\\
&=\sum_{j,m\in\Z}a(j,\lambda) b(m,\lambda) g^{(2)}_\lambda(z_{(k,l)}-z_{(j,m)}).
\end{align*}
Therefore
the uniqueness in  Theorem \ref{T:4.1} implies that, 
for every $(x,y)\in \R^2$, 
\begin{align*}
I^{(2)}(f\!\otimes h)(x,y)&=\sum_{j,m\in\Z}a(j,\lambda) b(m,\lambda) g^{(2)}_\lambda ((x,y)-(x_j,y_m))\\
                        &=\sum_{j,m\in\Z}a(j,\lambda) b(m,\lambda) g_\lambda (x-x_j) g_\lambda(y-y_m)=
 \Il(f)(x)\Il'(h)(y),
\end{align*}
and this finishes the proof.
\end{proof}
Proposition \ref{P:4.4} allows us to transfer Theorems \ref{T:3.2} and \ref{T:3.3} to the multidimensional case.

\begin{theorem}\label{T:4.5} Suppose that $F\in \PWt$. Then
$\lim\limits_{\lam\to0^+}\|I^{(2)}_\lam(F)-F\|_{L_2(\R^2)}=0$, 
and
$\lim\limits_{\lambda\to 0} I^{(2)}_\lambda(F)(z)=F(z)$
uniformly for $z\in\R^2$.
\end{theorem}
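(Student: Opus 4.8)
The plan is to exploit the tensor-product factorization $I^{(2)}_\lambda = \Il \otimes_h \Il'$ established in Proposition \ref{P:4.4}, together with the one-dimensional convergence results (Theorems \ref{T:3.2} and \ref{T:3.3}), to deduce the two-dimensional statements. The conceptual obstacle flagged in the text is that the delicate estimates of Section 3 (Corollary 3.7 and Proposition 3.8) do not survive passage to two dimensions; the point of the tensor machinery is precisely to sidestep them. I would first reduce the $L_2$ convergence to a uniform operator bound. By Theorem \ref{T:3.3} (applied twice), the families $\{\Il:0<\lambda\le1\}$ and $\{\Il':0<\lambda\le1\}$ are uniformly bounded, as operators $\PW\to L_2(\R)$, by some constant $M$. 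Since $\|\cdot\|_h$ is the Hilbert cross norm, Proposition \ref{P:4.3}(iii) gives $\|I^{(2)}_\lambda\| = \|\Il\|\cdot\|\Il'\| \le M^2$ uniformly for $\lambda\in(0,1]$. This uniform bound is the engine for an $\varepsilon/3$ argument.

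Next I would prove $L_2$ convergence on the dense subspace of elementary tensors and extend by uniform boundedness. For $f,h\in\PW$, Proposition \ref{P:4.4} yields the identity
\begin{equation*}
I^{(2)}_\lambda(f\otimes h) - f\otimes h
= \big(\Il(f)-f\big)\otimes \Il'(h) + f\otimes\big(\Il'(h)-h\big).
\end{equation*}
Taking $L_2(\R^2)$ norms (which equal the Hilbert cross norms by Proposition \ref{P:4.3}(v)) and applying the triangle inequality, the right-hand side is bounded by
$\|\Il(f)-f\|_{L_2(\R)}\|\Il'(h)\|_{L_2(\R)} + \|f\|_{L_2(\R)}\|\Il'(h)-h\|_{L_2(\R)}$,
which tends to $0$ as $\lambda\to0^+$ by Theorem \ref{T:3.2} (with $\|\Il'(h)\|$ bounded uniformly by $M\|h\|$). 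By bilinearity this extends to all finite sums of elementary tensors, which are dense in $\PWt = \PW\otimes_h\PW$. For a general $F\in\PWt$, choose a finite tensor sum $G$ with $\|F-G\|_{L_2(\R^2)}<\varepsilon$; then
\begin{equation*}
\|I^{(2)}_\lambda(F)-F\| \le \|I^{(2)}_\lambda(F-G)\| + \|I^{(2)}_\lambda(G)-G\| + \|G-F\|
\le (M^2+1)\varepsilon + \|I^{(2)}_\lambda(G)-G\|,
\end{equation*}
and the middle term vanishes as $\lambda\to0^+$, giving the first assertion.

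For uniform convergence I would run the same $\varepsilon/3$ scheme, but now in the $C_0(\R^2)$ norm. The point is that Theorem \ref{T:3.3} already tells us the one-dimensional operators, viewed as maps $\PW\to C_0(\R)$, are uniformly bounded (in sup norm, say by a constant $M_\infty$), via the Uniform Boundedness Principle. For an elementary tensor, the factorization $I^{(2)}_\lambda(f\otimes h)(x,y)=\Il(f)(x)\,\Il'(h)(y)$ lets me write the pointwise difference exactly as above, and the sup norm of each product term is controlled by $\|\Il(f)-f\|_\infty\|\Il'(h)\|_\infty + \|f\|_\infty\|\Il'(h)-h\|_\infty\to0$ by the uniform convergence in Theorem \ref{T:3.3}. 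The main thing to verify carefully here is that $I^{(2)}_\lambda$ is uniformly bounded as a family of operators into $C_0(\R^2)$ so that the density/approximation step closes; this follows because the projective-to-sup mapping behaves multiplicatively on elementary tensors and one gets $\|I^{(2)}_\lambda\|_{\PWt\to C_0(\R^2)}\le M_\infty^2$, again by a tensor-norm argument paralleling Proposition \ref{P:4.3}(iii). With that uniform bound in hand, the same three-term splitting against a dense finite tensor sum delivers uniform convergence for arbitrary $F\in\PWt$, completing the proof.
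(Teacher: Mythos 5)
Your proposal is correct and follows essentially the same route as the paper: factor $I^{(2)}_\lambda=\Il\otimes_h\Il'$, verify convergence on the dense subspace of elementary tensors via the telescoping identity, and close the density argument with a uniform operator bound coming from the tensor-norm machinery (for the uniform-convergence half, via $C_0(\R)\otimes_\vp C_0(\R)\cong C_0(\R^2)$ together with the fact that the injective norm is dominated by the Hilbert cross norm, i.e.\ Propositions \ref{P:4.2}(ii),(iii) and \ref{P:4.3}(ii)). One small correction: the uniform bound $\sup_{0<\lambda\le1}\|\Il\|_{\PW\to L_2(\R)}<\infty$ is Theorem \ref{T:2.8}, not Theorem \ref{T:3.3}, which instead supplies the uniform bound into $C_0(\R)$ that you need for the second assertion.
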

\begin{proof}  Let $F\in\PWt$. The first assertion follows from
 the aforementioned
fact that $\PW\!\otimes\! \PW$ is dense in  $\PWt$, Proposition \ref{P:4.3},
 and 
 Theorem \ref{T:3.2}. 

In order to show the second statement we first note (cf. \cite[page 50]{Ry}) that the injective
tensor product $C_0(\R)\!\otimes_\vp\! C_0(\R)$ is, via the natural map, isometrically isomorphic to 
 $C_0(\R^2)$. 
By  Theorem \ref{T:3.3} and Proposition  \ref{P:4.2}  (iii) the operators
$$
\Il\!\otimes_\vp\!\Il':  \PW\!\otimes_\vp\!\PW\to C_0(\R^2)
$$
are uniformly bounded. 
 So Propositions \ref{P:4.2}(ii)  and
\ref{P:4.3}(ii) imply that    
$\Il\otimes_h\Il'=I_\lambda^{(2)}$ are also uniformly bounded  operators 
from $\PWt$ to  $C_0(\R^2)$. 
 As $\PW\!\otimes\! \PW$ is dense in $\PW^{(d)}$, we 
argue as in the proof of the first statement, and 
conclude from 
 Theorem \ref{T:3.3}
 that  $I_\lambda^{(2)}(F)$ converges 
 uniformly to $F$, for $F\in\PWt$.
\end{proof}

\section{Further Results on univariate Gaussian interpolation}\label{S:5}

In this final section we return to univariate interpolation, in order
to discuss extensions of some results obtained in Section 2.  We begin
with a general result concerning bi-infinite matrices which
appears to be folkloric.  We have seen two articles which cite
a well-known treatise for it, but our search of the latter
came up emptyhanded.  A proof of the said result is 
indicated in \cite{Ja}, but for sake of completeness
and record, we include a fairly self-contained
and expanded rendition of this argument here.

\begin{theorem}\label{T:5.0} Suppose that
$(A(j,k))_{j,k\in\Z}$ is a bi-infinite
matrix which, as an operator on 
$\ell^2(\Z)$, is self adjoint, positive 
 and invertible.

  Assume further
that there exist positive constants
$\kappa$ and $\gamma$ such that 
$|A(j,k)|\le \kappa e^{-\gamma|j-k|}$
for every pair of integers $j$ and $k$.  
Then there exist constants 
$\tilde{\kappa}$ and $\tilde\gamma$
such that $|A^{-1}(s,t)|\le\tilde\kappa
e^{-\tilde\gamma|s-t|}$ for every
$s,t\in\Z$.
\end{theorem}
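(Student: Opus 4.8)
The plan is to use the Combes--Thomas gauge-conjugation method, which yields the full exponential decay of $A^{-1}$ in one stroke and needs neither polynomial approximation nor band truncation. I first observe that the only properties of $A$ that really matter are that it is boundedly invertible on $\ell^2(\Z)$ (so $\|A^{-1}\|<\infty$) and that its entries decay exponentially; self-adjointness and positivity enter only to make invertibility transparent, giving $\sigma(A)\subseteq[m,M]$ with $0<m\le M$ and $\|A^{-1}\|=1/m$.

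The central device is the one-parameter family of \emph{tilted} matrices $A_\mu$ with entries $A_\mu(j,k):=e^{\mu(j-k)}A(j,k)$, for real $\mu$ with $|\mu|<\gamma$; formally $A_\mu=D_\mu A D_\mu^{-1}$ for the diagonal gauge $D_\mu=\mathrm{diag}(e^{\mu j})$. First I would show that $A_\mu$ is a small bounded perturbation of $A$: writing $E_\mu:=A_\mu-A$, the elementary inequality $|e^x-1|\le|x|e^{|x|}$ gives $|E_\mu(j,k)|\le\kappa|\mu|\,|j-k|e^{-(\gamma-|\mu|)|j-k|}$, whence the Schur test (the bound being symmetric in $j,k$) yields $\|E_\mu\|\le\kappa|\mu|\sum_{n\in\Z}|n|e^{-(\gamma-|\mu|)|n|}=:C(\mu)$, a quantity finite for $|\mu|<\gamma$ and tending to $0$ as $\mu\to0$. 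Fixing $\mu_0\in(0,\gamma)$ so small that $C(\mu_0)<m/2$, a Neumann-series argument shows that $A_\mu=A+E_\mu$ is invertible for all $|\mu|\le\mu_0$, with the \emph{uniform} bound $\|A_\mu^{-1}\|\le\frac{1}{m-C(\mu)}\le 2/m$.

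Granting the conjugation identity $A_\mu^{-1}(j,k)=e^{\mu(j-k)}A^{-1}(j,k)$, the estimate falls out immediately: since every matrix entry is dominated by the operator norm, $|A^{-1}(j,k)|=e^{-\mu(j-k)}|A_\mu^{-1}(j,k)|\le\frac{2}{m}e^{-\mu(j-k)}$, and choosing the sign $\mu=\pm\mu_0$ according to that of $j-k$ (note $C(\pm\mu_0)=C(\mu_0)$) gives $|A^{-1}(j,k)|\le\frac{2}{m}e^{-\mu_0|j-k|}$, i.e. the claim with $\tilde\kappa=2/m$ and $\tilde\gamma=\mu_0$.

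The one genuinely delicate point — and the main obstacle — is that the gauge $D_\mu=\mathrm{diag}(e^{\mu j})$ is \emph{unbounded} on $\ell^2(\Z)$, so the identity $A_\mu^{-1}=D_\mu A^{-1}D_\mu^{-1}$ cannot be invoked directly, and a naive entrywise verification runs into circularity, since boundedness of the conjugated matrix is precisely what one is trying to establish. I would circumvent this with \emph{truncated bounded gauges}: for $N\in\N$ set $w_N(j):=\mu\max(-N,\min(N,j))$ and $D_N:=\mathrm{diag}(e^{w_N(j)})$, which is bounded and boundedly invertible. Because $t\mapsto\max(-N,\min(N,t))$ is $1$-Lipschitz, one has $|w_N(j)-w_N(k)|\le|\mu|\,|j-k|$ uniformly in $N$, so the perturbation bound above applies verbatim to $A_N:=D_N A D_N^{-1}$ with the same $C(\mu)$, giving $\|A_N^{-1}\|\le 2/m$ uniformly in $N$; and now, $D_N$ being a genuine bounded invertible operator, $A_N^{-1}=D_N A^{-1}D_N^{-1}$ holds as an honest operator identity, so $|A^{-1}(j,k)|\le\frac{2}{m}e^{-(w_N(j)-w_N(k))}$ for every $N$. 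For any fixed pair $(j,k)$, taking $N\ge\max(|j|,|k|)$ makes $w_N(j)-w_N(k)=\mu(j-k)$ \emph{exactly}, and the conclusion of the previous paragraph is reached with no circularity. As a remark I would flag the alternative route — best polynomial approximation of $1/x$ on $[m,M]$ together with band truncation of $A$ — which settles the banded case cleanly but, when pushed through a two-parameter balancing of degree against bandwidth, yields only stretched-exponential decay $e^{-c\sqrt{|j-k|}}$; recovering the sharp exponential rate that way forces one back into an inverse-closedness (Banach-algebra) argument, so the conjugation method is decidedly the more economical choice.
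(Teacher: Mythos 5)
Your argument is correct, and it is genuinely different from the one in the paper. The paper follows Jaffard's Neumann-series route: it sets $R=I-A/\|A\|$, uses positivity and invertibility to get $\|R\|=r<1$ (Lemma \ref{L:5.0}), proves an entrywise bound $|R^n(s,t)|\le D^n e^{-\gamma'|s-t|}$ for a slightly reduced rate $\gamma'<\gamma$ (Lemma \ref{L:5.1}), and then splits $A^{-1}=\|A\|^{-1}\sum_{n=0}^{N-1}R^n+R^NA^{-1}$, choosing $N\sim|s-t|/m$ so that the geometric growth $D^N$ is beaten by $e^{-\gamma'|s-t|}$ while the tail is controlled by $\|A^{-1}\|r^N$. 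Your Combes--Thomas conjugation replaces all of this with a single perturbation estimate: the tilted matrix $A_\mu$ differs from $A$ in operator norm by $C(\mu)=O(|\mu|)$ (your Schur-test bound is right, as is the use of $|e^x-1|\le|x|e^{|x|}|$), so for small $\mu$ it is invertible with $\|A_\mu^{-1}\|\le 2\|A^{-1}\|$, and the entry bound $|A^{-1}(j,k)|\le\|A_\mu^{-1}\|e^{-\mu(j-k)}$ follows. Your handling of the unboundedness of the gauge via the truncated weights $w_N$ is exactly the right fix and closes the only real loophole: the $1$-Lipschitz property of $w_N$ gives the perturbation bound uniformly in $N$, the operator identity $A_N^{-1}=D_NA^{-1}D_N^{-1}$ is legitimate because $D_N$ is bounded with bounded inverse, and letting $N\ge\max(|j|,|k|)$ recovers the exact tilt for each fixed entry. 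What each approach buys: the paper's argument is entirely elementary (Neumann series plus convolution estimates for exponential kernels) and fits naturally with the positivity hypothesis it already has in hand from Theorem \ref{T:NW}; yours is shorter, produces explicit constants ($\tilde\kappa=2\|A^{-1}\|$, $\tilde\gamma=\mu_0$ determined by $C(\mu_0)<1/(2\|A^{-1}\|)$), and, as you correctly observe, dispenses with self-adjointness and positivity altogether --- only bounded invertibility and off-diagonal decay are used --- whereas the paper's Lemma \ref{L:5.0} genuinely needs positivity to make $I-A/\|A\|$ a strict contraction. Your closing remark about the polynomial-approximation route yielding only stretched-exponential decay without an inverse-closedness argument is also accurate.
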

For the proof of Theorem \ref{T:5.0} we shall need 
the following pair of lemmata.

\begin{lemma}\label{L:5.0}  Let $(H,\langle\cdot,\cdot\rangle)$ be
a Hilbert space, and let $A:H\to H$ be a bounded 
linear, self-adjoint, positive and invertible  operator.
Let $R:=I-\frac{A}{\|A\|}$, where $I$ denotes
the identity.  Then $R=R^*$, $\langle x,Rx \rangle\ge0$
for every $x\in H$, and $\|R\|<1$.
\end{lemma}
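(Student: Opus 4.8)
The plan is to dispatch the three assertions in order, the first two being routine and the third carrying all the content. For self-adjointness, since $A=A^*$ and the identity is self-adjoint, I would simply compute $R^*=I^*-A^*/\|A\|=I-A/\|A\|=R$. For positivity of $R$, I would invoke the elementary bound $\langle x,Ax\rangle\le\|A\|\,\|x\|^2$ (which follows from the Cauchy--Schwarz inequality together with $\|Ax\|\le\|A\|\,\|x\|$, noting that $\langle x,Ax\rangle\ge 0$ is real because $A$ is positive), so that
\[
\langle x,Rx\rangle=\|x\|^2-\frac{1}{\|A\|}\langle x,Ax\rangle\ge\|x\|^2-\|x\|^2=0
\]
for every $x\in H$.

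The crux is the strict inequality $\|R\|<1$. Since $R$ is self-adjoint, I would use the standard identity $\|R\|=\sup_{\|x\|=1}|\langle x,Rx\rangle|$; by the preceding paragraph the numbers $\langle x,Rx\rangle$ are nonnegative, so it suffices to bound them strictly below $1$ on the unit sphere. Concretely, if I can exhibit a constant $c>0$ with $\langle x,Ax\rangle\ge c\|x\|^2$ for all $x$, then $\langle x,Rx\rangle\le(1-c/\|A\|)\|x\|^2$, whence $\|R\|\le 1-c/\|A\|<1$. Producing this lower bound is the one place where \emph{invertibility} of $A$ (rather than mere positivity) must be used, and it is the step I expect to be the main obstacle: without invertibility the infimum of $\langle x,Ax\rangle$ over the unit sphere could be $0$ and one would only get $\|R\|\le 1$.

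To obtain the lower bound in a self-contained way, avoiding any appeal to the spectral theorem, I would apply the Cauchy--Schwarz inequality for the positive semidefinite Hermitian form $(x,y)\mapsto\langle x,Ay\rangle$, taking the second slot to be $Ax$. Using $A=A^*$ one has $\langle x,A^2x\rangle=\|Ax\|^2$, so
\[
\|Ax\|^4=|\langle x,A^2x\rangle|^2\le\langle x,Ax\rangle\,\langle Ax,A^2x\rangle\le\langle x,Ax\rangle\,\|A\|\,\|Ax\|^2,
\]
which yields $\langle x,Ax\rangle\ge\|Ax\|^2/\|A\|$. Finally, invertibility enters through $\|x\|=\|A^{-1}Ax\|\le\|A^{-1}\|\,\|Ax\|$, giving $\|Ax\|\ge\|x\|/\|A^{-1}\|$ and therefore $\langle x,Ax\rangle\ge\|x\|^2/(\|A^{-1}\|^2\|A\|)$. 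Taking $c=1/(\|A^{-1}\|^2\|A\|)>0$ closes the argument. The only point warranting a remark is the degenerate case $Ax=0$, which forces $x=0$ by invertibility, so the divisions above are harmless throughout.
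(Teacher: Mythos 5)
Your proof is correct, and for the one substantive assertion, $\|R\|<1$, it takes a genuinely different (and more self-contained) route than the paper. The paper argues by contradiction: assuming $\|R\|=1$, it extracts unit vectors $x_n$ with $\langle x_n,\tfrac{A}{\|A\|}x_n\rangle\to 0$ and declares this to contradict invertibility --- a step whose justification is really the coercivity characterization recorded in the remark following the lemma (that a self-adjoint, positive, invertible operator satisfies $\inf_{\|x\|=1}\langle x,Ax\rangle>0$, cited to Kadison--Ringrose), so the paper's argument quietly leans on that nontrivial fact. You instead prove the coercivity directly and quantitatively: the Cauchy--Schwarz inequality for the positive semidefinite form $(x,y)\mapsto\langle x,Ay\rangle$ with $y=Ax$ gives $\langle x,Ax\rangle\ge\|Ax\|^2/\|A\|$, and invertibility upgrades this to $\langle x,Ax\rangle\ge\|x\|^2/(\|A^{-1}\|^2\|A\|)$, whence the explicit bound $\|R\|\le 1-\|A\|^{-2}\|A^{-1}\|^{-2}<1$ via the self-adjoint norm formula $\|R\|=\sup_{\|x\|=1}|\langle x,Rx\rangle|$ (which both you and the paper use). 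Your version buys two things: it avoids any appeal to the spectral theorem or to the cited ``deep fact,'' and it produces an explicit spectral gap for $R$, which would matter if one wanted quantitative decay constants in Theorem \ref{T:5.0}; the paper's version is shorter on the page but defers the real content to the remark. Both the self-adjointness of $R$ and the inequality $0\le\langle x,Rx\rangle\le\|x\|^2$ are handled identically in the two arguments.
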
 

\begin{remark}
 It is a deep fact (cf.\cite[Proposition 2.4.6]{KR})
 that if  $A:H\to H$ is a bounded linear operator such 
 that $\langle x,Ax\rangle\in\R$ for all $x\in H$, then $A$ is self-adjoint.
 Thus  
a bounded linear operator  $A:H\to H$ 
is self adjoint, positive, and invertible if and only if
$$ \inf_{x\in H, \|x\|=1}\langle x,Ax\rangle>0.$$
\end{remark}

\begin{proof}[Proof of Lemma \ref{L:5.0}] The symmetry of $R$ is evident.  If
$\|x\|=1$, then 
\[
\langle x,Rx \rangle=\|x\|^2-\Big\langle x,\frac{A}{\|A\|}x\Big\rangle.
\]
By the assumptions on $A$ and the BCS inequality we see that 
the term on the right of the preceding equation is between
$0$ and $1$.  Therefore $\|R\|=\sup\{\langle x,Rx \rangle : \|x\|=1\}\le1$.
If
$\|R\|=1$, then there is a sequence $(x_n:n\in\N)$ such that
$\|x_n\|=1$ for every $n$, and 
\[
1=\lim_{n\to\infty}\langle x_n, Rx_n \rangle=\lim_{n\to\infty}
\left(1-\Big\langle x_n,\frac{A}{\|A\|}x_n\Big\rangle\right).
\]
which contradicts the invertibility of $A$.
\end{proof}

\begin{lemma}\label{L:5.1} Suppose that
$(R(s,t))_{s,t\in\Z}$ is a bi-infinite matrix
satisfying the following
condition: there exist positive constants
$C$ and $\gamma$ such that $|R(s,t)|\le
 Ce^{-\gamma|s-t|}$ for every pair
of integers $s$ and $t$.  Given $0<\gamma'<\gamma$,
there is a constant $C(\gamma,\gamma')$,
depending on $\gamma$ and $\gamma'$, such that
$|R^n(s,t)|\le C^n C(\gamma,\gamma')^{n-1}
e^{-\gamma'|s-t|}$ for every $s,t\in\Z$.
\end{lemma}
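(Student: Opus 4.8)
The plan is to argue by induction on $n$, exploiting the fact that the powers of $R$ are computed by matrix multiplication, $R^n(s,t)=\sum_{u\in\Z}R^{n-1}(s,u)R(u,t)$, and that the hypothesized exponential decay is preserved under the resulting discrete convolution, provided one is willing to relax the decay rate slightly. For the base case $n=1$, I would simply observe that, since $0<\gamma'<\gamma$ and $|s-t|\ge0$, one has $|R(s,t)|\le Ce^{-\gamma|s-t|}\le Ce^{-\gamma'|s-t|}$, which is the claimed bound with the empty product $C(\gamma,\gamma')^{0}=1$.

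For the inductive step, assuming $|R^{n-1}(s,u)|\le C^{n-1}C(\gamma,\gamma')^{n-2}e^{-\gamma'|s-u|}$, I would estimate
\[
|R^n(s,t)|\le\sum_{u\in\Z}|R^{n-1}(s,u)|\,|R(u,t)|\le C^nC(\gamma,\gamma')^{n-2}\sum_{u\in\Z}e^{-\gamma'|s-u|}e^{-\gamma|u-t|}.
\]
Everything then hinges on a single convolution estimate, which I expect to be the crux of the argument: I want to show that $\sum_{u\in\Z}e^{-\gamma'|s-u|}e^{-\gamma|u-t|}$ is dominated by $C(\gamma,\gamma')\,e^{-\gamma'|s-t|}$ for an appropriate constant depending only on $\gamma$ and $\gamma'$.

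The key device here is the triangle inequality $|s-t|\le|s-u|+|u-t|$, which lets me pull the target decay out of the sum. Multiplying through by $e^{\gamma'|s-t|}$ and using $e^{\gamma'|s-t|}\le e^{\gamma'|s-u|}e^{\gamma'|u-t|}$ yields, term by term,
\[
e^{\gamma'|s-t|}e^{-\gamma'|s-u|}e^{-\gamma|u-t|}\le e^{-(\gamma-\gamma')|u-t|}.
\]
Summing over $u$ and substituting $v=u-t$, the strict positivity of the gap $\gamma-\gamma'$ guarantees that the resulting two-sided geometric series converges, so I may set
\[
C(\gamma,\gamma'):=\sum_{v\in\Z}e^{-(\gamma-\gamma')|v|}=\frac{1+e^{-(\gamma-\gamma')}}{1-e^{-(\gamma-\gamma')}},
\]
which depends only on $\gamma$ and $\gamma'$, exactly as required. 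Feeding this back into the estimate above gives $|R^n(s,t)|\le C^nC(\gamma,\gamma')^{n-1}e^{-\gamma'|s-t|}$, closing the induction. The only genuine subtlety is that the decay rate must be lowered from $\gamma$ to the strictly smaller $\gamma'$; this loss is precisely what renders the convolution summable, and it explains why the statement is phrased with a fixed $\gamma'<\gamma$ rather than retaining the original rate $\gamma$.
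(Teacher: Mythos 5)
Your proof is correct and follows essentially the same route as the paper: induction on $n$, with the whole burden carried by the convolution estimate $\sum_{u}e^{-\gamma'|s-u|}e^{-\gamma|u-t|}\le C(\gamma,\gamma')e^{-\gamma'|s-t|}$. The only difference is cosmetic: the paper establishes that estimate by splitting the sum over $u$ into the three ranges $u<s$, $s\le u\le t$, $u>t$ and summing three geometric series, whereas your triangle-inequality trick $e^{\gamma'|s-t|}\le e^{\gamma'|s-u|}e^{\gamma'|u-t|}$ collapses it to a single two-sided geometric series in $\gamma-\gamma'$ --- a slightly cleaner derivation of an equally valid constant.
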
 

\begin{proof} Suppose firstly that $s\neq t\in\Z$, and 
assume without loss that $s<t$.  Note that
\begin{align}\label{E:5.0}
\sum_{u=-\infty}^\infty e^{-\gamma|s-u|}e^{-\gamma'|t-u|}&=
\sum_{u=s}^t e^{-\gamma(u-s)}e^{-\gamma'(t-u)}\\
 \notag&\quad+\!
\sum_{u=-\infty}^{s-1} e^{-\gamma(s-u)}e^{-\gamma'(t-u)}\!+\!\!
\sum_{u=t+1}^\infty e^{-\gamma(u-s)}e^{-\gamma'(u-t)}\\
\notag&=:\Sigma_1+\Sigma_2+\Sigma_3.
\end{align}
Now
\begin{align}\label{E:5.1}
\Sigma_1&=
e^{-\gamma'(t-s)}e^{(\gamma-\gamma')s}\sum_{u=s}^t
e^{-u(\gamma-\gamma')}\\
\notag&=
e^{-\gamma'(t-s)}e^{(\gamma-\gamma')s}\sum_{v=0}^{t-s}e^{-s(\gamma-\gamma')-v(\gamma-\gamma')}\\
\notag&=
e^{-\gamma'(t-s)}\sum_{v=0}^{t-s}e^{-v(\gamma-\gamma')}\le
\frac{e^{-\gamma'(t-s)}}{1-e^{-(\gamma-\gamma')}}.
\end{align}
Moreover,
\begin{align}\label{E:5.2}
\Sigma_2&=
\sum_{v=1}^\infty e^{-\gamma v}e^{-\gamma'(t-s+v)}=
e^{-\gamma'(t-s)}\sum_{v=1}^\infty e^{-v(\gamma+\gamma')}\le
\frac{e^{-\gamma'(t-s)}}{1-e^{-(\gamma+\gamma')}},
\end{align}
whereas
\begin{align}\label{E:5.3}
\Sigma_3=
\sum_{v=1}^\infty e^{-\gamma'v}e^{-\gamma(v+t-s)}
=
e^{-\gamma(t-s)}\sum_{v=1}^\infty e^{-(\gamma+\gamma')v}\le
\frac{e^{-\gamma'(t-s)}}{1-e^{-(\gamma+\gamma')}}.
\end{align}
If $s=t$, then 
\begin{equation}\label{E:5.4}
\sum_{u=-\infty}^\infty e^{-\gamma|s-u|}e^{-\gamma'|t-u|}=
\sum_{u=-\infty}^\infty e^{-(\gamma+\gamma')|s-u|}\le
\frac{2}{1-e^{-(\gamma+\gamma')}}.
\end{equation}
From \eqref{E:5.0}-\eqref{E:5.4} we conclude that
\[
|R^2(s,t)|\le C^2\Big[
\frac{1}{1-e^{-(\gamma-\gamma')}}+\frac{2}{1-e^{-(\gamma+\gamma')}}
\Big]=:C^2 C(\gamma,\gamma').
\]
The general result follows from this via induction.
\end{proof}

\begin{proof}[Proof of Theorem \ref{T:4.5}]    Let
$R=I-\frac{A}{\|A\|}$ be the matrix given in that lemma.
As 
\[
R(j,k)=\frac{A(j,k)}{\|A\|}\;
\hbox{if}\;j\neq k,\quad
\hbox{and}\quad
R(k,k)=\frac{\|A\|-A(k,k)}{\|A\|},
\]
there is some constant $C$ such that 
$|R(s,t)|\le C e^{-\gamma|s-t|}$
for every pair of integers $s$ and $t$.  
As $A=\|A\|\,(I-R)$, and 
$r:=\|R\|<1$ (Lemma \ref{L:5.0}), 
the standard Neumann series expansion yields
the relations
\begin{align}\label{E:5.5}
A^{-1}&=
\|A\|^{-1}\sum_{n=0}^\infty R^n\\
\notag&=
\|A\|^{-1}\sum_{n=0}^{N-1} R^n+\|A\|^{-1}R^N\sum_{n=0}^\infty R^n
=\|A\|^{-1}\sum_{n=0}^{N-1}R^n+R^N A^{-1},
\end{align}
for any positive integer $N$.
As $R^0(s,t)=I(s,t)=0$ if $s\neq t$, $s,t\in\Z$,
we
see from \eqref{E:5.5} that
\begin{equation}\label{E:5.6}
A^{-1}(s,t)=\|A\|^{-1}\sum_{n=1}^{N-1}R^n(s,t)
+[R^N A^{-1}](s,t), \quad s\neq t.
\end{equation}
Choose and fix a positive number $\gamma'<\gamma$,
and recall from 
Lemma \ref{L:5.1} that there is
a constant $C(\gamma,\gamma')$ such that
$|R^n(s,t)|\le C^nC(\gamma,\gamma')^{n-1}
e^{-\gamma'|s-t|}$ for every positive integer
$n$, and every pair of integers $s$ and $t$.
So we may assume that there is 
some constant $D:=D(\gamma,\gamma')>1$ such that
$|R^n(s,t)|\le D^{n}
e^{-\gamma'|s-t|}$ for every positive integer
$n$, and every pair of integers $s$ and $t$.
Using this bound in 
\eqref{E:5.6} provides the 
following estimate for every $s\neq t$:
\begin{align}\label{E:5.7}
|A^{-1}(s,t)|&\le
\|A\|^{-1}e^{-\gamma'|s-t|}\sum_{n=1}^{N-1}D^n+\|A^{-1}\|r^N\\
\notag&\le
\|A\|^{-1}e^{-\gamma'|s-t|}\frac{D^N}{D-1}+\|A^{-1}\|r^N.
\end{align}
Let $m$ be a positive integer such that
$e^{-\gamma'}D^{1/m}<1$, and let
$s,t\in\Z$ with $|s-t|\ge m$.
Writing $|s-t|=Nm+k$, $0\le k\le m-1$, 
we find that 
\begin{equation}\label{E:5.8}
e^{-\gamma'|s-t|}D^N=
\Big[e^{-\gamma'}D^{\frac{1}{m+(k/N)}}\Big]^{|s-t|}\le
[e^{-\gamma'}D^{1/m}]^{|s-t|}.
\end{equation}
Further,
\begin{equation}\label{E:5.9}
r^N=\Big[r^{\frac{1}{m+(k/N)}}\Big]^{|s-t|}\le
[r^{1/2m}]^{|s-t|},
\end{equation}
and combining \eqref{E:5.8} and \eqref{E:5.9} with
\eqref{E:5.7} leads to the following 
bounds for every $|s-t|\ge m$ and an appropriately chosen $\tilde\gamma>0$:
\begin{equation}\label{E:5.10}
|A^{-1}(s,t)|\le
\frac{\|A\|^{-1}}{D-1}[e^{-\gamma'}D^{1/m}]^{|s-t|}
+\|A^{-1}\|[r^{1/2m}]^{|s-t|}=O(e^{-\tilde\gamma|s-t|}).
\end{equation}
On the other hand, if $|s-t|<m$, we obtain
\begin{equation}\label{E:5.11}
|A^{-1}(s,t)|\le\|A^{-1}\|\le
(\|A^{-1}\| e^{m\tilde\gamma})e^{-\tilde\gamma|s-t|},
\end{equation}
and combining \eqref{E:5.10} with \eqref{E:5.11}
finishes the proof.
\end{proof}

A direct consequence of the
previous theorem is the following:

\begin{corollary}\label{C:5.1}
Let $\lambda>0$, and let $(x_j:j\in\Z)$
be a sequence of real numbers satisfying
the following condition: there exists
a positive number $q$ such that
$x_{j+1}-x_j\ge q$ for every $j\in \Z$.
Let $A:=A_{\lam}$ be a bi-infinite
matrix whose entries are given by
$A(j,k):=e^{-\lam(x_j-x_k)^2}$,
$j,k\in\Z$.  Then there exist
positive constants $\beta_1$ and 
$\gamma_1$, depending
on $\lam$ and $q$, such that 
$|A^{-1}(s,t)|\le \beta_1 e^{-\gamma_1|s-t|}$,
$s,t\in\Z$.
\end{corollary}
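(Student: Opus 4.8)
The plan is to verify that the matrix $A=A_{\lam}$ satisfies every hypothesis of Theorem \ref{T:5.0}, after which the assertion is immediate. Since $A(j,k)=e^{-\lam(x_j-x_k)^2}=e^{-\lam(x_k-x_j)^2}=A(k,j)$ and all entries are real, $A$ is a formally symmetric matrix with real entries; once we know it is bounded on $\ell_2(\Z)$, this makes it self adjoint there. Thus the only things to check are boundedness, the exponential off-diagonal decay, positivity, and invertibility.

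First I would record the off-diagonal decay. The hypothesis $x_{j+1}-x_j\ge q$ telescopes to $|x_j-x_k|\ge q|j-k|$ for every pair $j,k\in\Z$, exactly as noted in Remark \ref{R:invertibility}. Hence $(x_j-x_k)^2\ge q^2|j-k|^2\ge q^2|j-k|$, the last step using that $n^2\ge n$ for every nonnegative integer $n=|j-k|$. Consequently
\[
|A(j,k)|=e^{-\lam(x_j-x_k)^2}\le e^{-\lam q^2|j-k|},\qquad j,k\in\Z,
\]
so the decay hypothesis of Theorem \ref{T:5.0} holds with $\kappa=1$ and $\gamma=\lam q^2$. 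The same bound shows, via Schur's test (equivalently, via the decomposition into a uniformly convergent series of diagonals described in Remark \ref{R:invertibility}), that $A$ is a bounded operator on $\ell_2(\Z)$, with $\|A\|$ controlled by $\lam$ and $q$ alone.

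It remains to check that $A$ is positive and invertible on $\ell_2(\Z)$, and here both follow from results already quoted for $d=1$. Taking $\|\cdot\|_2=|\cdot|$ in Theorem \ref{T:NW} furnishes a constant $\theta>0$, depending only on $\lam$ and $q$, such that the (real) quadratic form satisfies
\[
\langle\xi,A\xi\rangle=\sum_{j,k\in\Z}\xi_j\,\overline{\xi_k}\,\gl(x_j-x_k)\ge\theta\sum_{j\in\Z}|\xi_j|^2,
\qquad \xi=(\xi_j)\in\ell_2(\Z).
\]
Thus $\inf_{\|\xi\|=1}\langle\xi,A\xi\rangle\ge\theta>0$, and by the Remark following Theorem \ref{T:5.0} this single inequality already guarantees that $A$ is self adjoint, positive, and invertible, with $\|A^{-1}\|\le\theta^{-1}$ depending only on $\lam$ and $q$ (alternatively one may invoke the bounded invertibility asserted directly in Remark \ref{R:invertibility}). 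With all four hypotheses in hand, Theorem \ref{T:5.0} produces constants $\beta_1=\tilde\kappa$ and $\gamma_1=\tilde\gamma$ with $|A^{-1}(s,t)|\le\beta_1 e^{-\gamma_1|s-t|}$ for all $s,t\in\Z$.

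There is no genuine obstacle here: the corollary is a bona fide direct consequence of Theorem \ref{T:5.0}, and the only content is the bookkeeping needed to check its hypotheses. If anything merits care, it is confirming that the output constants depend only on $\lam$ and $q$. This follows because the decay parameters $\kappa=1$ and $\gamma=\lam q^2$ depend only on $\lam$ and $q$, and because the bounds for $\|A\|$ (from the Schur estimate) and $\|A^{-1}\|$ (from the lower bound $\theta$ of Theorem \ref{T:NW}) likewise depend only on $\lam$ and $q$; tracing these through the proof of Theorem \ref{T:5.0} shows that $\beta_1$ and $\gamma_1$ inherit the same dependence.
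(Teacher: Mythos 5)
Your proposal is correct and follows the same route as the paper: the paper's own proof consists of the single observation that $x_{j+1}-x_j\ge q$ forces $|x_j-x_k|\ge q|j-k|$, leaving the reader to supply exactly the verifications you spell out (the decay bound $e^{-\lam q^2|j-k|}$, boundedness, and positivity/invertibility via Theorem~\ref{T:NW} and Remark~\ref{R:invertibility}) before invoking Theorem~\ref{T:5.0}. Your write-up is simply a more explicit rendering of the intended argument.
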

\begin{proof}
 The hypothesis on the $x_j$'s implies that 
$|x_j-x_k|\ge |j-k|q$, for $j,k\in\Z$.
\end{proof}

\begin{remark} The foregoing
result implies, in particular, that 
$A^{-1}$ is a bounded operator
on every $\ell_p(\Z)$, $1\le p\le\infty$.
\end{remark}

We turn now to the second half of this section, in
which we introduce the fundamental functions
for Gaussian interpolation (at scattered
data sites), and set out some of their 
basic properties.  

\begin{theorem}\label{T:5.3}
Let $\lam>0$ be fixed, and let 
$(x_j:j\in\Z)$ be a sequence
of real numbers satisfying the 
following condition: there exists
$q>0$ such that $x_{j+1}-x_j\ge q$
for every integer $j$.  Let 
$A=A_\lam$ be the bi-infinite
matrix whose entries are given by
$A(j,k)=e^{-\lam(x_j-x_k)^2}$,
$j,k\in\Z$.  Given $l\in\Z$,
let the {\em $l$-th fundamental
function} be defined as follows:
\[
L_{l,\lam}(x):=L_l(x):=\sum_{k\in\Z}A^{-1}(k,l)e^{-\lam(x-x_k)^2},
\qquad x\in\R.
\]
The following hold:
\item{(i)} The function $L_l$ is continuous throughout
$\R$.
\item{(ii)} Each $L_l$ obeys the fundamental
interpolatory conditions $L_l(x_m)=\delta_{{}_{lm}}$,
$m\in\Z$.
\item{(iii)} If, in addition, there exists a positive
number $Q$ such that $x_{j+1}-x_j\le Q$
for every $Q$, then there exist positive
constants $\beta_2$ and $\rho$, depending
on $\lam$, $q$, and $Q$ such that 
$|L_l(x)|\le\beta_2 e^{-\rho|x-x_l|}$
for every $x\in\R$ and every $l\in\Z$.
\item{(iv)} Assume that
$(x_j:j\in\Z)$ satisfies
the condition stipulated in (iii).
Let $(b_l:l\in\Z)$
be a sequence satisfying the following
condition: there exists a positive
number $K$ and a positive integer $P$ such that
$|b_l|\le K|l|^P$ for every integer $l$.  Then
the function 
$\R\ni x\mapsto\sum_{l\in\Z}b_lL_l(x)$ is continuous
on $\R$.
\end{theorem}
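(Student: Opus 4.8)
The plan is to treat the four assertions in increasing order of difficulty, taking as the central input the exponential off-diagonal decay of $A^{-1}$ furnished by Corollary \ref{C:5.1}: there are constants $\beta_1,\gamma_1>0$, depending only on $\lam$ and $q$, with $|A^{-1}(k,l)|\le\beta_1 e^{-\gamma_1|k-l|}$ for all $k,l\in\Z$. In particular, for each fixed $l$ the column $(A^{-1}(k,l):k\in\Z)$ is absolutely summable, hence bounded.

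For (i), since this column is bounded and $x_{j+1}-x_j\ge q$, Proposition \ref{P:1.1} applies directly to the coefficient sequence $(A^{-1}(k,l):k\in\Z)$ and yields at once that $L_l$ is continuous (and bounded) on $\R$. For (ii), I would evaluate at a data site: $L_l(x_m)=\sum_{k\in\Z}A^{-1}(k,l)\gl(x_m-x_k)=\sum_{k\in\Z}A(m,k)A^{-1}(k,l)$, where the absolute summability of the column legitimizes reading this as the $(m,l)$ entry of the product $AA^{-1}$; since $A$ is boundedly invertible on $\ell_2(\Z)$ (Remark \ref{R:invertibility}), this entry equals $\delta_{lm}$.

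Part (iii) is the crux, and I expect it to be the main obstacle: one must convert the \emph{index}-distance decay of $A^{-1}$ into \emph{spatial} decay in $|x-x_l|$. Writing $R:=|x-x_l|$, the decay estimate gives $|L_l(x)|\le\beta_1\sum_{k\in\Z}e^{-\gamma_1|k-l|}e^{-\lam(x-x_k)^2}$. Here the \emph{upper} gap hypothesis enters decisively through $|x_l-x_k|\le Q|l-k|$, whence $|x-x_k|\ge R-Q|l-k|$. I would split the sum according to whether $Q|l-k|\le R/2$ or $Q|l-k|>R/2$. On the first range $|x-x_k|\ge R/2$, so each Gaussian factor is at most $e^{-\lam R^2/4}$ while the weights $e^{-\gamma_1|k-l|}$ sum to a constant; on the second range I would extract a factor $e^{-\gamma_1|k-l|/2}\le e^{-\gamma_1 R/(4Q)}$ and sum the remaining $e^{-\gamma_1|k-l|/2}$ to a constant (alternatively, Proposition \ref{P:NSW} bounds $\sum_k\gl(x-x_k)\le\nu$). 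Combining the two ranges gives $|L_l(x)|\le C_1 e^{-\lam R^2/4}+C_2 e^{-\gamma_1 R/(4Q)}$; since a Gaussian is dominated by any decaying exponential, this folds into a single bound $\beta_2 e^{-\rho|x-x_l|}$ with, say, $\rho=\gamma_1/(4Q)$. All constants depend only on $\lam,q,Q$ and not on $l$, which is exactly the uniformity claimed.

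Finally, for (iv) I would deduce continuity from locally uniform convergence. By (iii), $|b_l L_l(x)|\le K\beta_2|l|^P e^{-\rho|x-x_l|}$. On a fixed interval $[-M,M]$ the lower gap gives $|x_l|\ge q|l|-|x_0|$, so $|x-x_l|\ge\tfrac{q}{2}|l|$ once $|l|$ is large, uniformly in $|x|\le M$; hence on $[-M,M]$ the series is dominated by $\sum_l C|l|^P e^{-\rho q|l|/2}<\infty$, a bound independent of $x$. The Weierstrass $M$-test then yields uniform convergence on $[-M,M]$, and since each $L_l$ is continuous by (i), the sum is continuous there; as $M$ is arbitrary, $\sum_l b_l L_l$ is continuous on $\R$.
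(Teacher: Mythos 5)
Your proposal is correct. Parts (i), (ii) and (iv) follow the paper's own argument essentially verbatim: (i) is Proposition~\ref{P:1.1} applied to the bounded column $(A^{-1}(k,l):k\in\Z)$, (ii) is the entrywise identity $AA^{-1}=I$ (legitimate because the relevant rows and columns decay exponentially, as you note), and (iv) is locally uniform convergence via domination of the tail by $|l|^{P}e^{-c|l|}$, exactly as in the paper (which phrases the localization by placing $x$ in $[x_s,x_{s+1})$ rather than in a compact interval $[-M,M]$, an immaterial difference). The one place where you genuinely diverge is (iii), which is indeed the crux. Both arguments turn on the same key step --- using the upper gap $|x_k-x_l|\le Q|k-l|$ to convert the index-distance decay of $A^{-1}$ into spatial decay --- but the subsequent bookkeeping differs. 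You split the sum over $k$ into the ranges $Q|l-k|\le R/2$ and $Q|l-k|>R/2$ with $R=|x-x_l|$, obtaining a Gaussian term plus an exponential term and then absorbing the Gaussian into the exponential, ending with $\rho=\gamma_1/(4Q)$. The paper instead first weakens both exponents to a common rate $\rho=\min\{\lam,\gamma_1/Q\}$, multiplies the inequality through by $e^{\rho|x-x_l|}$, and applies the triangle inequality $|x-x_l|-|x_k-x_l|\le|x-x_k|$ to reduce everything to the single $l$-independent quantity $\sum_{k}e^{\rho|x-x_k|}e^{-\rho(x-x_k)^2}$, which is then bounded uniformly in $x$ by locating $x$ between consecutive nodes and invoking the separation $q$. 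The two routes yield comparable (neither uniformly better) decay rates; the paper's is slightly slicker, while yours makes more explicit where the superexponential decay of the Gaussian is being spent. Both are complete, and all your constants depend only on $\lam$, $q$, $Q$, as required.
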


\begin{proof} (i) As $A^{-1}$ is a bounded
operator on $\ell_\infty(\Z)$,
the sequence $(A^{-1}(k,l):k\in\Z)$ is bounded.
Hence the continuity of $L_l$ follows from
Proposition \ref{P:1.1}.

(ii) Given $m\in \Z$, we have
\[
L_l(x_m)=\sum_{k\in\Z}A^{-1}(k,l)e^{-\lam(x_m-x_k)^2}
=\sum_{k\in\Z}A^{-1}(k,l)A(m,k)=I(m,l)=\delta_{{}_{lm}}.
\]

(iii) The assumption $x_{j+1}-x_j\le Q$ for 
every integer $j$ implies that $|x_k-x_l|\le|k-l|Q$
for every pair of integers $k$ and $l$.  
Therefore Corollary \ref{C:5.1} leads to the bound
$|A^{-1}(k,l)|\le\beta_1 e^{-\gamma_2|x_k-x_l|}$,
$k,l\in\Z$, where $\gamma_2:=\gamma_1/Q$.  
Consequently,
\begin{align}\label{E:5.12}
|L_l(x)|&\le
\beta_1\sum_{k\in\Z}e^{-\gamma_2|x_k-x_l|}e^{-\lam(x-x_k)^2}\\
\notag&\le
\beta_1\sum_{k\in\Z}e^{-\rho|x_k-x_l|}e^{-\rho(x-x_k)^2},
\qquad x\in\R,
\end{align}
where $\rho:=\min\{\lam,\gamma_2\}$.
Therefore
\begin{align}\label{E:5.13}
e^{\rho|x-x_l|}|L_l(x)|&\le
\beta_1\sum_{k\in\Z}e^{\rho[|x-x_l|-|x_k-x_l|]}e^{-\rho(x-x_k)^2}\\
\notag&\le
\beta_1\sum_{k\in\Z}e^{\rho|x-x_k|}e^{-\rho(x-x_k)^2},\qquad x\in\R.
\end{align}
Fix $x\in\R$, and let $s$ be the integer such that
$x_s\le x<x_{s+1}$.  From \eqref{E:5.13} we obtain
\begin{align}\label{E:5.14}
e^{\rho|x-x_l|}|L_l(x)|&\le
\beta_1\sum_{k=s}^{s+1}e^{\rho|x-x_k|}e^{-\rho(x-x_k)^2}\\
\notag&\qquad\qquad+
\beta_1\sum_{k\in{\Z\setminus\{s,s+1\}}}e^{\rho|x-x_k|}e^{-\rho(x-x_k)^2}\\
\notag&\le
2\beta_1e^{\rho Q}+
\beta_1\sum_{k\in{\Z\setminus\{s,s+1\}}}e^{\rho|x-x_k|}e^{-\rho(x-x_k)^2}.
\end{align}
As $mq\le|x-x_{s-m}|\le(m+1)Q$ for every positive integer
$m$, and $(m-1)q\le|x-x_{s+m}|\le mQ$ for every
positive integer $m\ge2$, the final sum on the right
of \eqref{E:5.14} is no larger than 
\[
\beta_1\sum_{m=1}^\infty e^{\rho(m+1)q}e^{-\rho m^2q^2}+
\beta_1\sum_{m=2}^\infty e^{\rho mq}e^{-\rho(m-1)^2q^2}
=:\beta_2.
\]
Combining this with \eqref{E:5.14} and \eqref{E:5.13}
finishes the proof.

(iv) Each summand is continuous by assertion (i).
Let $x\in\R$, and let $x_s\le x<x_{s+1}$.
Assertion (iii) implies that
\begin{align*}
\Big|\sum_{l\in\Z}b_lL_l(x)\Big|&\le
K\beta_2\Big[
\sum_{l=s}^{s+1}|l|^P+
\sum_{l\in{\Z\setminus\{s,s+1\}}}|l|^Pe^{-\rho|x-x_l|}\Big]\\
&\le
K\beta_2\Big[
\sum_{l=s}^{s+1}|l|^P+
\sum_{m=1}^\infty|l|^Pe^{-\rho mq}+\sum_{m=2}^\infty|l|^Pe^{-\rho(m-1)q}\Big].
\end{align*}
It follows that the series $\sum_{l\in\Z}b_lL_l(x)$ is locally
uniformly convergent, whence the stated result follows.
\end{proof}

The counterpart of Part (iii) of the foregoing theorem
for Gaussian cardinal interpolation was obtained
in \cite{RS1}, and its analogue for spline interpolation
was proved in \cite{deB}.

Earlier in this paper we have discussed Gaussian
interpolation operators associated to functions,
specifically to bandlimited functions.  Here our
perspective changes slightly, as we begin
to think of these interpolation operators
acting on sequence spaces.

\begin{theorem}\label{T:5.4}  Let $\lam>0$
be fixed.  Suppose that $(x_j:j\in\Z)$ is
a real sequence satisfying the following
condition: there exist positive numbers
$q$ and $Q$ such that 
$q\le x_{j+1}-x_j\le Q$ for every integer $j$.
Let $A=A_\lam$ be the bi-infinite
matrix whose entries are given by
$A(j,k)=e^{-\lam(x_j-x_k)^2}$,
$j,k\in\Z$.  Given $p\in[1,\infty]$, and 
${\overline y}:=(y_l:l\in\Z)\in\ell_p(\Z)$,
define 
\[
\Il({\overline y},x):=\sum_{k\in\Z}(A^{-1}{\overline y})_k
e^{-\lam(x-x_k)^2}, \qquad x\in\R,
\]
where $(A^{-1}{\overline y})_k$ denotes the
$k$-th component of the sequence $A^{-1}{\overline y}$.
The following hold:
\item{(i)} The function $\R\ni x\mapsto \Il({\overline y},x)$
is continuous on $\R$.
\item{(ii)} If $x$ is any real number, then
\[
\Il({\overline y},x)=\sum_{l\in\Z}y_lL_l(x),
\]
where $(L_l:l\in\Z)$ is the sequence of 
fundamental functions introduced in the preceding
theorem.
\item{(iii)} There is a constant $\beta_3$,
depending on $\lam$, $q$, $Q$, and $p$, such that
\[
\|\Il({\overline y},\cdot)\|_{L_p(\R)}\le
\beta_3\|{\overline y}\|_{\ell_p(\Z)},
\]
for every ${\overline y}\in\ell_p(\Z)$. 
\end{theorem}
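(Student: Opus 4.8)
The plan is to extract all three assertions from the two exponential-decay estimates already at our disposal: the off-diagonal decay of $A^{-1}$ furnished by Corollary~\ref{C:5.1}, and the pointwise decay $|L_l(x)|\le\beta_2 e^{-\rho|x-x_l|}$ supplied by part (iii) of Theorem~\ref{T:5.3}. For assertion (i), I would first observe that, by the Remark following Corollary~\ref{C:5.1}, $A^{-1}$ is a bounded operator on $\ell_\infty(\Z)$; since $\ell_p(\Z)\subseteq\ell_\infty(\Z)$ with $\|\cdot\|_{\ell_\infty(\Z)}\le\|\cdot\|_{\ell_p(\Z)}$, the sequence $A^{-1}\overline{y}$ is bounded. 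The separation hypothesis $x_{j+1}-x_j\ge q$ then permits an appeal to Proposition~\ref{P:1.1}, which yields simultaneously the convergence of the defining series and the continuity (indeed boundedness) of $x\mapsto\Il(\overline{y},x)$.

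For assertion (ii), I would write $(A^{-1}\overline{y})_k=\sum_{l\in\Z}A^{-1}(k,l)y_l$ and interchange the order of summation in the double series $\sum_{k,l}A^{-1}(k,l)y_l\gl(x-x_k)$, thereby passing from $\sum_k(A^{-1}\overline{y})_k\gl(x-x_k)$ to $\sum_l y_l L_l(x)$. The interchange is legitimate once absolute summability is checked for each fixed $x$: using $|A^{-1}(k,l)|\le\beta_1 e^{-\gamma_1|k-l|}$ (Corollary~\ref{C:5.1}), the crude bound $|y_l|\le\|\overline{y}\|_{\ell_\infty(\Z)}$, the geometric estimate $\sum_l e^{-\gamma_1|k-l|}\le(1+e^{-\gamma_1})/(1-e^{-\gamma_1})=:C'$, and the uniform Gaussian bound $\sum_k\gl(x-x_k)\le\nu$ from Proposition~\ref{P:NSW} (applicable since $|x_j-x_k|\ge|j-k|q$), the double sum is dominated by $\beta_1 C'\nu\,\|\overline{y}\|_{\ell_\infty(\Z)}$ after summing over $l$ first and then over $k$. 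Tonelli's theorem for series then applies and grants the interchange.

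For assertion (iii), which I expect to be the crux, I would combine the representation from (ii) with the decay bound of Theorem~\ref{T:5.3}(iii) to obtain $|\Il(\overline{y},x)|\le\beta_2\sum_l|y_l|e^{-\rho|x-x_l|}$, reducing matters to the $L_p$-boundedness of the positive kernel operator $\overline{y}\mapsto\sum_l e^{-\rho|x-x_l|}y_l$ from $\ell_p(\Z)$ to $L_p(\R)$. I would dispatch this by a Schur test for the mixed pairing of $(\R,dx)$ with $(\Z,\text{counting measure})$, whose two hypotheses are the uniform bounds $C_1:=\sup_x\sum_l e^{-\rho|x-x_l|}<\infty$ (a consequence of the $q$-separation, argued exactly as in the geometric estimates of the proof of Theorem~\ref{T:5.3}) and $C_2:=\sup_l\int_\R e^{-\rho|x-x_l|}\,dx=2/\rho$. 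A single application of H\"older's inequality in the summation variable then gives $\|\Il(\overline{y},\cdot)\|_{L_p(\R)}\le\beta_2 C_1^{1/p'}C_2^{1/p}\|\overline{y}\|_{\ell_p(\Z)}$ for $1<p<\infty$, with $p'$ the conjugate exponent, while the endpoints $p=1$ and $p=\infty$ follow by direct inspection (and would also drop out of the two endpoint bounds by Riesz--Thorin interpolation); setting $\beta_3:=\beta_2 C_1^{1/p'}C_2^{1/p}$ finishes the proof. The main obstacle is to organize this last step so that the constant is genuinely uniform across $p\in[1,\infty]$, which hinges entirely on the two kernel bounds $C_1$ and $C_2$ being independent of $p$—precisely the dividend paid by the exponential-decay estimate of Theorem~\ref{T:5.3}(iii), and the reason the upper spacing bound $Q$ must be assumed here.
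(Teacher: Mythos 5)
Your proposal is correct. Parts (i) and (ii) proceed exactly as in the paper: boundedness of $A^{-1}\overline{y}$ plus Proposition~\ref{P:1.1} for continuity, and the same absolute-summability check (geometric sum in $l$ from Corollary~\ref{C:5.1}, then Proposition~\ref{P:NSW} in $k$) to justify the interchange of summation. Part (iii), however, takes a genuinely different route. The paper fixes the interval $J_s=[x_s,x_{s+1})$, uses $|J_s|\le Q$ to bound $\int_{J_s}|\Il(\overline{y},x)|^p\,dx$ by $Q$ times the $p$-th power of a discrete convolution $(u*v)(s)$ with $v\in\ell_1(\Z)$, and then invokes the Generalized Minkowski Inequality $\|u*v\|_{\ell_p}\le\|v\|_{\ell_1}\|u\|_{\ell_p}$; the constant picks up an explicit factor $Q^{1/p}$. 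You instead keep the $x$-variable continuous and run a Schur test on the positive kernel $K(x,l)=e^{-\rho|x-x_l|}$, using $\sup_x\sum_l K(x,l)\le 2/(1-e^{-\rho q})$ and $\sup_l\int_\R K(x,l)\,dx=2/\rho$, which yields the bound $\beta_2C_1^{1/p'}C_2^{1/p}$ in one stroke for all $p$, with the endpoints read off directly. Both arguments are sound and rest on the same two facts (uniform summability of the kernel in $l$, coming from the separation $q$, and uniform integrability in $x$); your version avoids the discretization and the case split between $p=\infty$ and $p<\infty$, while the paper's version makes the role of the upper spacing bound $Q$ visible in the final constant. You are also right that in your argument $Q$ enters only through $\beta_2$ and $\rho$ from Theorem~\ref{T:5.3}(iii), not through the Schur bounds themselves.
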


\begin{proof}(i) As $A^{-1}$ is a bounded
operator on $\ell_p(\Z)$,
the sequence $A^{-1}{\overline y}$ is bounded.
Hence the continuity of $\Il({\overline y},\cdot)$ follows from
Proposition \ref{P:1.1}.

(ii) Let $x\in\R$. Then
\[
\Il({\overline y},x)=\sum_{k\in\Z}(A^{-1}{\overline y})_k
e^{-\lam(x-x_k)^2}=
\sum_{k\in\Z}\Big[\sum_{l\in\Z}y_lA^{-1}(k,l)\Big]
e^{-\lam(x-x_k)^2}.
\]
The required result obtains by interchanging the order
of summation, which is justified by the following
series of estimates, the first of which
is consequent upon 
Corollary \ref{C:5.1}, and the last of
which follows from Proposition \ref{P:1.1}.
\begin{align*}
\sum_{k\in\Z}\Big[\sum_{l\in\Z}|y_lA^{-1}(k,l)|\Big]
e^{-\lam(x-x_k)^2}&=
O\Big(\sum_{k\in\Z}\Big[\sum_{l\in\Z}e^{-\gamma_1|k-l|}\Big]
e^{-\lam(x-x_k)^2}\Big)\\
&=
O\Big(\sum_{k\in\Z}e^{-\lam(x-x_k)^2}\Big)=O(1).
\end{align*}

(iii) We begin with $p=\infty$.  If
$x_s\le x<x_{s+1}$, assertion 
(ii), the triangle inequality, 
and a now familiar argument
lead to the following:
\begin{align}\label{E:5.15}
|\Il({\overline y},x)|&
=\left|\sum_{l\in\Z}y_lL_l(x)\right|\\
\notag&\le
\beta_2\Big[
|y_s|+|y_{s+1}|+\sum_{m=1}^\infty|y_{s-m}|e^{-\rho mq}+
\sum_{m=2}^\infty|y_{s+m}|e^{-\rho(m-1)q}\Big]\\
\notag&\le
\frac{2\beta_2}{1-e^{-\rho q}}\,\|{\overline y}\|_{\ell_\infty(\Z)}
=:\beta_3\|{\overline y}\|_{\ell_\infty(\Z)}.
\end{align}
It is immediate that $\|\Il({\overline y},\cdot)\|_{L_\infty(\R)}
\le \beta_3\|{\overline y}\|_{\ell_\infty(\Z)}$.

Suppose now that $1\le p<\infty$.  Let $J_s:=[x_s,x_{s+1})$,
$s\in\Z$, and recall from \eqref{E:5.15}
that, if $x\in J_s$, then
\[
|\Il({\overline y},x)|
\le
\beta_2\Big[
|y_s|+|y_{s+1}|+\sum_{m=1}^\infty|y_{s-m}|e^{-\rho mq}+
\sum_{m=2}^\infty|y_{s+m}|e^{-\rho(m-1)q}\Big].
\]
Therefore, as $|x_{s+1}-x_s|\le Q$, we have 
\begin{align}\label{E:5.16}
\int_{J_s}|\Il({\overline y},x)|^p\,dx&\le
Q\beta_2^p
\Big[
|y_s|+|y_{s+1}|\\
\notag&\qquad\qquad+\sum_{m=1}^\infty|y_{s-m}|e^{-\rho mq}+
\sum_{m=2}^\infty|y_{s+m}|e^{-\rho(m-1)q}\Big]^p.
\end{align}
Let ${\overline u}=(u_k:k\in\Z)$ and 
${\overline v}=(v_k:k\in\Z)$ be a pair
of sequences defined by $u_k=|y_k|$,
$k\in\Z$, and 
\[
v_k=
\begin{cases}1&\text{if $k\in\{-1,0\}$};\\
e^{-\rho kq}&\text{if $k\ge1$};\\
e^{\rho(k+1)q}&\text{if $k\le-2$}.
\end{cases}
\]
Then \eqref{E:5.16} may be recast as follows:
\[
\int_{J_s}|\Il({\overline y},x)|^p\,dx\le
Q\beta_2^p
\left|\sum_{m\in\Z}u_{s-m}v_m\right|^p,
\]
so that
\begin{align}\label{E:5.17}
\|\Il({\overline y},\cdot)\|_{L_p(\R)}&=
\left[
\sum_{s\in\Z}\int_{J_s}|\Il({\overline y},x)|^p\,dx
\right]^{1/p}\\
\notag&\le
Q^{1/p}\beta_2
\left[
\sum_{s\in\Z}\left|\sum_{m\in\Z}u_{s-m}v_m\right|^p
\right]^{1/p}.
\end{align}
Now the Generalized Minkowski Inequality
 (cf. \cite[p. 123]{HLP}) implies that
\begin{align}\label{E:5.18}
\left[
\sum_{s\in\Z}\left|\sum_{m\in\Z}u_{s-m}v_m\right|^p
\right]^{1/p}\!\le
\sum_{m\in\Z}|v_m|\,\left[
\sum_{s\in\Z}|u_{s-m}|^p\right]^{1/p}
\!\le
\frac{2}{1-e^{-\rho q}}\,\|{\overline y}\|_{\ell_p(\Z)},
\end{align}
and a combination of \eqref{E:5.18} and 
\eqref{E:5.17} completes the proof.
\end{proof}

We conclude the paper with a few remarks.  Suppose that
$f$ is a bandlimited function, and let
$d_k=f(x_k)$, $k\in\Z$.  
We have seen
in Proposition \ref{P:1.5} that
this sequence ${\overline d}=(d_k:k\in\Z)$
is square summable.  Furthermore,
as observed in the course of the proof
of Theorem \ref{T:1.6}(v), the
Gaussian interpolant $\Il(f)(\cdot)$ studied
earlier coincides with the function
$\Il({\overline d},\cdot)$ introduced in
this section.  Thus the final part
of the previous theorem presents a 
twofold generalization
of the estimate \eqref{E:1.12}: to
values of $p\in[1,\infty]$ other than
$2$, whilst requiring only that 
the underlying set of sampling points
satisfies condition \eqref{E:0.7}.  In
particular we do {\sl not\/} assume in
Theorem \ref{T:5.4}(iii) that 
$(x_j:j\in\Z)$ is a Riesz-basis sequence.  
However, it is not without interest
to note that the main convergence
theorems obtained in Section 4 
do not hold for data sites which
merely satisfy the quasi-uniformity
condition \eqref{E:0.7}. 
For example, let $X:=\Z\setminus\{0\}$, 
and let $f$ be the bandlimited function 
given by $f(x):=\sin(\pi x)/(\pi x)$,
$x\in\R$.  As $f(x_j)=0$
for every $x_j\in X$, 
$\Il(f)$ is identically zero, so
it is manifest that $\Il(f)$
does not converge to $f$.

Counterparts of the result obtained in
part (iii) of the preceding theorem,
for the case when $x_j=j$, may be found
in \cite{RS2} and \cite{RS1}.  However,
those estimates are much more precise in nature.  

Suppose that $(x_j:j\in\Z)$ is a 
strictly increasing sequence
of real numbers satisfying the following
condition: 
\begin{equation}\label{E:Kadec}
|x_j-j|\le D<1/4, \qquad j\in\Z.
\end{equation} 
Then $(x_j)$ 
is a Riesz-basis sequence \cite{Ka}.
Let
$(L_{l,\lam}:l\in\Z)$ be the associated sequence
of fundamental functions defined
in Theorem \ref{T:5.3}.  Define
\[
G(x):=(x-x_0)\prod_{j=1}^\infty\left(
1-\frac{x}{x_j}\right)\left(1-\frac{x}{x_{-j}}\right),
\qquad x\in\R,
\]
and let
\[
G_l(x):=\frac{G(x)}{(x-x_l)G'(x_l)}, \qquad x\in\R,\quad l\in\Z.
\]
It is shown in \cite{Le} 
each $G_l$ 
is a bandlimited function satisfying
the interpolatory conditions
$G_l(x_m)=\delta_{{}_{lm}}$,
$m\in\Z$.  
Thus we find that $\Il(G_l)=L_{l,\lam}$, and
conclude from Theorems \ref{T:3.2} and
\ref{T:3.3} that $\lim_{\lam\to0^+}L_{l,\lam}
=G_l$ in $L_2(\R)$ and uniformly
on $\R$.  As a particular example,
we learn from \cite{Hi} that, if 
$x_0=0$ and $x_j=x_{-j}=j+c^2/j$, $j\ge1$,
$0<|c|<1/2$, then $(x_j)$ fulfills 
\eqref{E:Kadec}, and that the corresponding
function $G$ is given in closed form:
\[
G(x)=x[\cos(\pi(x^2-4c^2)^{1/2})-\cos(\pi x)]/2\sinh(\pi c),
\qquad x\in\R.
\]

\bigskip

\centerline{ACKNOWLEDGMENTS}
\smallskip
The authors take pleasure in thanking David Kerr and Wally Madych for their time and help.

\bigskip

\begin{thebibliography}{NSW}
\bibitem[deB]{deB} C.~de Boor, Odd-degree spline interpolation at a 
bi-infinite knot sequence, {\em in\/} ``Approximation Theory'' (R. Schaback
and K. Scherer, Eds.), Lecture Notes in Mathematics, Vol. 556, pp. 30--53, Springer-Verlag,
Berlin, 1976.
\bibitem[BS]{BS} B.~J.~C.~Baxter and N.~Sivakumar, On shifted cardinal interpolation
by Gaussians and multiquadrics, {\em J. Approx. Theory\/} {\bf 87} (1996), 36--59.
\bibitem[Ch]{Ch} K. ~Chandrasekharan, {\sl Classical Fourier transforms\/},
Springer Verlag (1989).
\bibitem[Go]{Go} R.~R.~Goldberg, {\sl Fourier transforms\/}, Cambridge University
Press (1961).
\bibitem[HLP]{HLP} G.~H.~Hardy, J.~E.~Littlewood, and G.~P\'olya,
 {\sl Inequalities}, Cambridge University Press (1934).
\bibitem[Hi]{Hi} J.~R.~ Higgins, A sampling theorem for irregularly
spaced sample points, {\em IEEE Trans. Information Theory\/} (1976), 
621--622.
\bibitem[Ja]{Ja} S.~Jaffard, Propri\'et\'es des matrices ``bien localis\'ees''
pr\`es de leur diagonale et quelques applications, {\em Ann. Inst. Henri
Poincar\'e\/} {\bf 7} (1990), 461--476.
\bibitem[Ka]{Ka} M.~I.~Kadec, The exact value of the Paley-Wiener
constant, {\em Dokl. Adad. Nauk SSSR\/} {\bf 155\/} (1964), 1243--1254.
\bibitem[KR]{KR} R.~V.~Kadison and J.~R.~Ringrose, 
{\sl Fundamentals of the theory of operator algebras, Volume 1:
elementary theory\/}, Academic Press (1983). 
\bibitem[Le]{Le} N.~Levinson, On non-harmonic Fourier series, {\em Ann. Math.\/}
{\bf 37}, (1936), 919--936.
\bibitem[LM]{LM} Yu.~Lyubarskii and W.~R.~Madych, The recovery of irregulary sampled
band limited functions via tempered splines, {\em J. Funct. Anal.\/} {\bf 125}
(1994), 201--222.
\bibitem[NW]{NW} F.~J.~Narcowich and J.~D.~Ward, Norm estimates for the inverses of a
general class of scattered-data radial-function interpolation matrices, 
{\em J. Approx. Theory} {\bf 69\/} (1992), 84--109.
\bibitem[NSW]{NSW} F.~J.~Narcowich, N.~Sivakumar, and J.~D.~Ward, On condition
numbers associated with radial-function interpolation, {\em J. Math. Anal. Appl.\/}
{\bf 186} (1994), 457--485.
\bibitem[Ry]{Ry} R.~A.~Ryan, {\sl Introduction to tensor products of Banach spaces},
 Springer Monographs in Mathematics, Springer-Verlag (2002).
\bibitem[RS1]{RS1} S.~D.~Riemenschneider and N.~Sivakumar, On cardinal interpolation
by Gaussian radial-basis functions: Properties of fundamental functions and
estimates for Lebesgue constants, {\em J. Anal. Math.\/} {\bf 79\/} (1999), 33--61.
\bibitem[RS2]{RS2} S.~D.~Riemenschneider and N.~Sivakumar, Gaussian radial-basis
functions: cardinal interpolation of $\ell^p$ and power-growth data, {\em Adv.
Comput. Math.\/} {\bf 11} (1999), 229--251.
\bibitem[RS3]{RS3} S.~D.~Riemenschneider and N.~Sivakumar, Cardinal interpolation
by Gaussian functions: a survey, {\em J. Analysis\/} {\bf 8} (2000), 157--178.
\bibitem[Yo]{Yo} R.~M.~Young, {\sl An introduction to nonharmonic Fourier
series\/}, Academic Press (1980).

\end {thebibliography}
\end{document}